\title{
Simultaneous Convergent Continued Fraction Algorithm for Real and $p$-adic Fields with Applications to Quadratic Fields
}
\author{Shin-ichi Yasutomi}
\newtheorem{thm}{Theorem}[section]
\newtheorem{cor}[thm]{Corollary}
\newtheorem{lem}[thm]{Lemma}
\newtheorem{conj}[thm]{Conjecture}
\theoremstyle{definition}
\newtheorem{defn}{Definition}[section]
\newtheorem{example}{Example}[section]
\theoremstyle{remark}
\newtheorem{rem}{Remark}[section]
\numberwithin{equation}{section}
\begin{document}

\maketitle

\footnote[0]{2020 {\it Mathematics Subject Classification}. 11J70, 11Y65, 11J61, 11D88.}
\footnote[0]{{\it Key words and phrases.}  continued fraction, $p$-adic continued fraction, quadratic fields}

\begin{abstract}
Let $p$ be a prime number and $K$ be a field with embeddings into $\mathbb{R}$ and $\mathbb{Q}_p$.
We propose an algorithm that generates continued fraction expansions converging in $\mathbb{Q}_p$ and is expected to simultaneously converge in both $\mathbb{R}$ and $\mathbb{Q}_p$.
This algorithm produces finite continued fraction expansions for rational numbers.
In the case of $p=2$ and if $K$ is a quadratic field, the continued fraction expansions generated by this algorithm converge in $\mathbb{R}$, and they are eventually periodic or finite.
For an element $\alpha$ in $K$, let $p_n/q_n$ denote the $n$-th convergent.
  There exist constants  $u_1$ and $u_2$ in ${\mathbb R}_{>0}$ with $u_1 + u_2 = 2$, and constants $C_1$ and $C_2$ in ${\mathbb R}_{>0}$
  such that $|\alpha - p_n/q_n| < C_1/|q_n|^{u_1}$ and $|\alpha - p_n/q_n|_2 < C_2/|q_n|^{u_2}$.
Here, $|\cdot|_2$ represents the $2$-adic distance.
For prime numbers $p > 2$, we present numerical experiences.
\end{abstract}

\section{Introduction}
The continued fraction algorithm not only provides the best approximation in Diophantine approximation of real numbers, but also possesses various favorable properties and holds significant positions in various mathematical domains.
Not only the regular continued fraction algorithm, but also continued fraction algorithms with various features have been proposed. Research on these algorithms remains active up to the present day.
Let $p$ be a prime number and $\mathbb{Q}_p$ be
the completion of  $\mathbb{Q}$ with respect to  the $p$-adic topology.
For $u\in \mathbb{Q}_p$, let $v_p(u)$ be the valuation of $u$ and $|v|_p:=\frac{1}{p^{v_p(u)}}$. 
A continued fraction expansion algorithm in $\mathbb{Q}_p$ has yet to be discovered 
that rivals the regular continued fraction expansion algorithm in the real numbers.
Mahler\cite{M} initiated the first attempt at $p$-adic continued fractions.
Schneider \cite{S} and Ruban \cite{Ru} independently proposed different algorithms during the same period, both contributing significantly to the field of continued fraction expansion algorithms for $\mathbb{Q}_p$
(see for example \cite{Ro}).
The continued fraction expansion algorithms by Schneider and Ruban are known to reveal that rational numbers do not necessarily possess finite expansions (see \cite{Bu}, \cite{L}, \cite{W}) and that quadratic irrationals do not always possess periodic expansions (see \cite{CVZ}, \cite{O}, \cite{We}).
The algorithm we propose in this paper follows in the lineage of Ruban's continued fraction expansion algorithm. Let us introduce necessary notation.
Let $J$ be a representative system modulo $p$.
It is well known that every $u\in {\mathbb Q}_p$ can be written as
\begin{align*}
u=\sum_{n\in \mathbb{Z}}c_np^n,  \ \  c_n\in J,
\end{align*}
where $c_k=0$ for $k<v_p(u)$.
We define
\begin{align*}
\lfloor u \rfloor_p^{J}:=\sum_{n\in \mathbb{Z}_{\leq 0}}c_np^n,\quad
\lceil u \rceil_p^{J}:=\sum_{n\in \mathbb{Z}_{< 0}}c_np^n.
\end{align*}
For the standard representative $J=\{0,1,\ldots,p-1\}$, we denote $\lfloor \cdot \rfloor_p^{J}$ and $\lceil \cdot \rceil_p^{J}$ by $\lfloor \cdot \rfloor_p$ and $\lceil \cdot \rceil_p$ respectively.

Ruban's continued fraction algorithm is applied to $\alpha\in {\mathbb Q}_p$ as outlined below.
Starting with $\alpha_0=\alpha$, 
we define sequences $\{a_n\}$ and $\{\alpha_n\}$ as follows:
\begin{align*}
a_n=\lfloor \alpha_n \rfloor_p,\quad
\alpha_{n+1}=\dfrac{1}{\alpha_{n}-a_n}.
\end{align*}
Browkin \cite{Br} defined a following algorithm similar to Ruban's by considering $J=\{-\frac{p-1}{2},\ldots,\frac{p-1}{2}\}$ when $p$ is an odd prime, demonstrating finite continued fraction expansions for rational numbers.
Starting with $\alpha_0=\alpha$, 
we define sequences $\{a_n\}$ and $\{\alpha_n\}$ as follows:
\begin{align}\label{Browkin0}
a_n=\lfloor \alpha_n \rfloor_p^J,\quad
\alpha_{n+1}=\dfrac{1}{\alpha_{n}-a_n}.
\end{align}
In Browkin \cite{Br2}, the following continued fraction algorithm is provided, and the defining expression varies depending on whether $n$ is even or odd.
Let $\alpha\in {\mathbb Q}_p$ and $J=\{-\frac{p-1}{2},\ldots,\frac{p-1}{2}\}$. 
Starting with $\alpha_0=\alpha$, 
The sequences $\{a_n\}$ and $\{\alpha_n\}$ are defined as follows:
\begin{align}\label{Browkin}
&a_n=\begin{cases}
\lfloor \alpha_n \rfloor_p^J,&\text{if $n$ is even,}\\
\lceil \alpha_n \rceil_p^{J},&\text{if $n$ is odd and $v_p(\alpha_n-\lceil \alpha_n \rceil_p^{J})=0$,}\\
\lceil \alpha_n \rceil_p^{J}-sign(\lceil \alpha_n \rceil_p^{J}),&\text{if $n$ is odd and $v_p(\alpha_n-\lceil \alpha_n \rceil_p^{J})\ne 0$,}
\end{cases}\\
&\alpha_{n+1}=\dfrac{1}{\alpha_{n}-a_n}.\nonumber
\end{align}
Browkin \cite{Br2} showed the experimental results about this algorithm.
Barbero, Cerruti and Murru \cite{BCM} demonstrated that rational numbers have finite continued fraction expansions.
Murru and Romeo \cite{MR} introduced the following modified algorithm,
 which is based on Algorithm (\ref{Browkin}). They demonstrated that this algorithm improves upon Browkin's algorithms in several aspects.
Let $\alpha\in {\mathbb Q}_p$ and $J=\{-\frac{p-1}{2},\ldots,\frac{p-1}{2}\}$. 
Starting with $\alpha_0=\alpha$, 
The sequences $\{a_n\}$ and $\{\alpha_n\}$ are defined as follows:
\begin{align}\label{MReq}
&a_n=\begin{cases}
\lfloor \alpha_n \rfloor_p^J,&\text{if $n$ is even,}\\
\lceil \alpha_n \rceil_p^{J},&\text{if $n$ is odd,}
\end{cases}\\
&\alpha_{n+1}=\dfrac{1}{\alpha_{n}-a_n}.\nonumber
\end{align}
Murru, Romeo, and Santilli \cite{MRS} defined a class including the algorithm mentioned in (\ref{MReq}),
 and discussed convergence properties as well as the finite expansion of rational numbers.

Let $K$ be a field that has an embedding into $\mathbb{R}$ and $\mathbb{Q}_p$ respectively.
Assume $\sigma_{\infty}$ gives an embedding into $\mathbb{R}$ and $\sigma_p$ gives an embedding into $\mathbb{Q}_p$.
We define a novel algorithm inspired by the approaches in (\ref{Browkin}) and (\ref{MReq}), aiming to achieve simultaneous rational approximations in both $\mathbb{R}$ and $\mathbb{Q}_p$ for elements of $K$. While the process involves selecting integers that are close to given numbers, it is designed to maintain proximity within the respective topologies of $\mathbb{R}$ and $\mathbb{Q}_p$.
Let $\alpha \in K$. We denote $\sigma_{\infty}(\alpha)$ by $\alpha_{\infty}$ (or alternatively, $(\alpha)_{\infty})$   and $\sigma_{p}(\alpha)$ by $\alpha_{\langle p \rangle}$ (or alternatively, $(\alpha)_{\langle p \rangle})$.
For a vector $\mathbf{a}=(x_1,\ldots,x_n)\in K^n$, we denote
 $((x_1)_{\infty},\ldots,(x_n)_{\infty})$ by $\mathbf{a}_{\infty}$(or alternatively, $(\mathbf{a})_{\infty}$) and
 $((x_1)_{\langle p \rangle},\ldots,(x_n)_{\langle p \rangle})$ by $\mathbf{a}_{\langle p \rangle}$(or alternatively, $(\mathbf{a})_{\langle p \rangle}$).

\begin{defn}\label{transformation}
We define a transformation  $F_{p,0}$  by
for $\alpha\in K$
\begin{align*}
F_{p,0}(\alpha):=\dfrac{1}{\alpha-mp-\left\lfloor\alpha_{\langle p \rangle} \right\rfloor_p},
\end{align*}
where among $i \in \mathbb{Z}$ for which $|\alpha_{\infty}-ip-\left\lfloor\alpha_{\langle p \rangle} \right\rfloor_p|$ attains its minimum, let $m$ be the smallest value.
We define $b^{(0)}(\alpha):=mp+\left\lfloor \alpha_{\langle p \rangle} \right\rfloor_p$.
We define a transformation $F_{p,1}$ by
for $\alpha\in K$
\begin{align*}
F_{p,1}(\alpha):=\dfrac{1}{\alpha-m-\left\lfloor \alpha_{\langle p \rangle} \right\rfloor_p},
\end{align*}
where among $i \in \mathbb{Z}$ for which $|\alpha_{\infty}-i-\left\lfloor \alpha_{\langle p \rangle} \right\rfloor_p|$ 
attains its minimum, let $m$ be the smallest value.
We define $b^{(1)}(\alpha):=m+\left\lfloor \alpha_{\langle p \rangle} \right\rfloor_p$.
\end{defn}

\begin{rem}
We remark that $F_{p,0}$ and $F_{p,1}$ are determined independently of the representative system modulo $p$.
\end{rem}

\begin{defn}\label{algol1}
We define an algorithm  as follows:
Let $\alpha=\alpha_0 \in K$. 
We define $\{\alpha_n\}$ and $\{b_n(\alpha)\}$recursively as:\\
\begin{align*}
&\alpha_{n+1}:=\begin{cases}
F_{p,0}(\alpha_{n})& \text{if $n\equiv 0 \mod\ 2$},\\
F_{p,1}(\alpha_{n})& \text{if $n\equiv 1 \mod\ 2$},\\
\end{cases}\\
&\text{and}\\
&b_{n}(\alpha):=\begin{cases}
b^{(0)}(\alpha_n)& \text{if $n\equiv 0 \mod\ 2$},\\
b^{(1)}(\alpha_n)& \text{if $n\equiv 1 \mod\ 2$}.
\end{cases}
\end{align*}
The algorithm halts if $F_{p,0}(\alpha_{n})$ or $F_{p,1}(\alpha_{n})$ is not defined,
 resulting in $\alpha_{n+1},\ldots$ not being defined.
\end{defn}

By this algorithm, for $\alpha \in K$, the sequence $\{b_n(\alpha)\}_{n\in \mathbb{Z}{\geq 0}}$ is generated, allowing us to consider its formal continued fraction expansion $\left[b_0(\alpha);b_1(\alpha),\ldots\right]$,
namely,
\begin{align*}
b_0(\alpha)+\cfrac{1}{b_1(\alpha) + \cfrac{1}{b_2(\alpha) + \cfrac{1}{b_3(\alpha)+\cfrac{1}{\ldots}}}}.
\end{align*}

We define convergents of continued fraction in the usual manner.

\begin{defn}\label{convergents1}
For $n\in \mathbb{Z}_{> 0}$, $p_n$ and $q_n$ are defined by
\begin{align*}
\begin{pmatrix}
p_{n}&p_{n-1}\\
q_{n}&q_{n-1}
\end{pmatrix}
=
\begin{pmatrix}
b_0(\alpha)&1\\
1&0
\end{pmatrix}
\begin{pmatrix}
b_1(\alpha)&1\\
1&0
\end{pmatrix}
\cdots
\begin{pmatrix}
b_n(\alpha)&1\\
1&0
\end{pmatrix},
\end{align*}
where we define $p_{-1}=1$ and $q_{-1}=0$.
\end{defn} 
We note that $p_n$ and $q_n$ are  in  $\mathbb{Z}[\frac{1}{p}]$ for 
all $n\in \mathbb{Z}_{\geq 0}$.

There are several studies that have dealt with continued fraction expansions that converge in both $\mathbb{Q}_p$ and $\mathbb{R}$ (e.g., \cite{ABCM}, \cite{BE}, \cite{P}).
The discussion on the periodicity of Ruban's continued fraction expansions is effectively utilized in \cite{CVZ} by embedding them into real numbers.
Bekki \cite{Bekki} proposed a continued fraction algorithm for complex quadratic irrationals, considering reductions in both the $\mathbb{Q}_p$ and $\mathbb{C}$ and demonstrated that these numbers have an eventually periodic continued fraction expansion.

It is not difficult to see the convergence of our algorithm in $\mathbb{Q}_p$.
We demonstrate that every rational number has a finite continued fraction expansion.
We show that for irrational numbers in $K$, the continued fraction expansion of each converges in $\mathbb{R}$ and eventually becomes periodic when $p=2$ and $K$ is a real quadratic field. This corresponds to Lagrange's theorem for regular continued fractions (see for example \cite{HW}).  
We also demonstrate that the accuracy of approximation is evaluated by utilizing the denominators of the convergents when $p=2$ and $K$ is a real quadratic field.
Ridout \cite{Ri} extended Roth's theorem to establish limits on simultaneous approximations of algebraic numbers, both in the field of real numbers and across several $p$-adic fields. Our algorithm provide the extent of approximation that approaches the boundary of those limits.
Bedocchi \cite{Be} showed a condition for purely periodic expansions with Algorithm (\ref{Browkin0}) for numbers with eventually periodic expansions.
for numbers with eventually periodic expansions.
 Similarly, Murru and Romeo \cite{MR} showed a condition for purely periodic expansions
  with respect to Algorithm (\ref{MReq}) for numbers possessing eventually periodic expansions.
  These results parallel Galois's theorem for regular continued fractions.
 We give a necessary condition for the continued fraction expansion of numbers in relation to our algorithm to become purely periodic.
In the determination of the periodic points in continued fraction expansions, the natural extension associated with the dynamical system related to the algorithm is commonly employed (see for example \cite{DK}).
Regarding our algorithm, we construct an analogue of the natural extension.             
Now, it is important to note that our algorithm is defined even when $K$ is embedded into $\mathbb{C}$. 
From numerical computation examples, it suggests that the algorithm works in such cases as well.
We remark that since there exist embeddings from $\mathbb{Q}_p$ to $\mathbb{C}$, our algorithm can be interpreted as an algorithm over $\mathbb{Q}_p$.
On the other hand, there are also approaches of deriving the continued fraction expansion using algebraic conditions of elements in $K$
 (see \cite{Br2}, \cite{STY}, \cite{STY2}).

\section{Fundamental properties}
In this chapter, we outline the fundamental properties of our algorithm.
\begin{lem}\label{lemord}
Let $\alpha\in K$. Then, for even $n\in \mathbb{Z}_{> 0}$, 
we have $v_p((\alpha_{n})_{\langle p \rangle})\leq 0$. For odd $n\in \mathbb{Z}_{> 0}$, it holds that $v_p((\alpha_{n})_{\langle p \rangle})< 0$.
For all $n\in \mathbb{Z}_{>0}$, $v_p((\alpha_{n})_{\langle p \rangle})=v_p(b_n(\alpha))$.
\end{lem}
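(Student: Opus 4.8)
The plan is to unwind the recursion one step at a time, transport everything along $\sigma_p$, and read off $p$-adic valuations. Two elementary facts about $\lfloor\cdot\rfloor_p$ will be used repeatedly: for $\beta\in\mathbb{Q}_p$ the difference $\beta-\lfloor\beta\rfloor_p$ has $v_p(\beta-\lfloor\beta\rfloor_p)\geq 1$ (it is $\sum_{k\geq 1}c_kp^k$), and consequently, whenever $v_p(\beta)\leq 0$, we get $v_p(\lfloor\beta\rfloor_p)=v_p(\beta)$ by the ultrametric inequality. Also, since $\sigma_p$ is injective, whenever $\alpha_{n+1}$ is defined the element $\alpha_n-b_n(\alpha)$ of $K$ is nonzero, hence so is its image in $\mathbb{Q}_p$, and $(\alpha_{n+1})_{\langle p\rangle}=\big((\alpha_n)_{\langle p\rangle}-b_n(\alpha)\big)^{-1}$ has finite valuation; in particular none of the ``fractional parts'' occurring can cause a division by $0$.

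Next I would treat the two parities of $n$ separately. If $n>0$ is odd, then $n-1$ is even, so $b_{n-1}(\alpha)=b^{(0)}(\alpha_{n-1})=mp+\lfloor(\alpha_{n-1})_{\langle p\rangle}\rfloor_p$ for some $m\in\mathbb{Z}$, whence
\[
(\alpha_{n-1})_{\langle p\rangle}-b_{n-1}(\alpha)=\big((\alpha_{n-1})_{\langle p\rangle}-\lfloor(\alpha_{n-1})_{\langle p\rangle}\rfloor_p\big)-mp
\]
is a difference of two elements of valuation $\geq 1$, so it has valuation $\geq 1$; inverting gives $v_p((\alpha_n)_{\langle p\rangle})\leq-1<0$. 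If instead $n>0$ is even, then $n-1$ is odd, so $b_{n-1}(\alpha)=b^{(1)}(\alpha_{n-1})=m+\lfloor(\alpha_{n-1})_{\langle p\rangle}\rfloor_p$ with $m\in\mathbb{Z}$, and
\[
(\alpha_{n-1})_{\langle p\rangle}-b_{n-1}(\alpha)=\big((\alpha_{n-1})_{\langle p\rangle}-\lfloor(\alpha_{n-1})_{\langle p\rangle}\rfloor_p\big)-m
\]
has valuation $\geq\min(1,v_p(m))\geq 0$ (and is nonzero, as noted), so $v_p((\alpha_n)_{\langle p\rangle})\leq 0$. This settles the first two assertions.

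For the identity $v_p((\alpha_n)_{\langle p\rangle})=v_p(b_n(\alpha))$ with $n>0$: having just shown $v_p((\alpha_n)_{\langle p\rangle})\leq 0$, the second elementary fact gives $v_p(\lfloor(\alpha_n)_{\langle p\rangle}\rfloor_p)=v_p((\alpha_n)_{\langle p\rangle})$. When $n$ is even, $b_n(\alpha)=mp+\lfloor(\alpha_n)_{\langle p\rangle}\rfloor_p$ with $v_p(mp)\geq 1>0\geq v_p(\lfloor(\alpha_n)_{\langle p\rangle}\rfloor_p)$, so $v_p(b_n(\alpha))=v_p(\lfloor(\alpha_n)_{\langle p\rangle}\rfloor_p)=v_p((\alpha_n)_{\langle p\rangle})$. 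When $n$ is odd we additionally know $v_p((\alpha_n)_{\langle p\rangle})\leq-1$, hence $v_p(\lfloor(\alpha_n)_{\langle p\rangle}\rfloor_p)\leq-1<0\leq v_p(m)$ for the integer $m$ with $b_n(\alpha)=m+\lfloor(\alpha_n)_{\langle p\rangle}\rfloor_p$, and again $v_p(b_n(\alpha))=v_p(\lfloor(\alpha_n)_{\langle p\rangle}\rfloor_p)=v_p((\alpha_n)_{\langle p\rangle})$. The main point requiring care is the bookkeeping — keeping straight which of $F_{p,0},F_{p,1}$ (equivalently $b^{(0)},b^{(1)}$) governs each index, and handling the fractional parts even in the degenerate case where they vanish; once that is arranged, every valuation estimate is forced by the ultrametric inequality and no genuine induction is needed.
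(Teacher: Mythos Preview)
Your argument is correct and follows essentially the same route as the paper's own proof: both compute $v_p\big((\alpha_{n-1})_{\langle p\rangle}-b_{n-1}(\alpha)\big)$ directly from the definitions of $b^{(0)}$ and $b^{(1)}$ and the basic property $v_p(\beta-\lfloor\beta\rfloor_p)\geq 1$, then invert. Your write-up is more explicit---you state the two facts about $\lfloor\cdot\rfloor_p$ up front, handle both parities in full, and separate the valuation identity for $b_n$---whereas the paper treats only $n=1$ in detail and then says ``similarly, the proof of the theorem can be established inductively''; but the underlying computations are identical.
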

\begin{proof}
Since it holds that for some integer $m$,
$\alpha-b^{(0)}(\alpha)=\alpha-mp-\left\lfloor \alpha_{\langle p \rangle} \right\rfloor_p$,
we have $v_p(\alpha_{\langle p \rangle}-b^{(0)}(\alpha))>0$.
Therefore, if $\alpha-b^{(0)}(\alpha)\ne 0$,  
we have $v_p((\alpha_{1})_{\langle p \rangle})=v_p(\frac{1}{\alpha_{\langle p \rangle}-b^{(0)}(\alpha)})<0$.
Since it holds that for some integer $m'$,
$\alpha_1-b^{(1)}(\alpha_1)=\alpha_1-m'-\left\lfloor (\alpha_1)_{\langle p \rangle} \right\rfloor_p$,
we have $v_p(\alpha_1-b^{(1)}(\alpha_1))\geq 0$.
Therefore, from the fact that $v_p((\alpha_1)_{\langle p \rangle})<0$, we have 
$v_p(b^{(1)}(\alpha_1))=v_p((\alpha_1)_{\langle p \rangle})$.
Similarly, the proof of the theorem can be established inductively.
\end{proof}

\begin{lem}\label{convergentsp}
For all $n\in \mathbb{Z}_{> 0}$, $\displaystyle v_p(q_n)=\sum_{k=1}^nv_p(b_k(\alpha))$.
\end{lem}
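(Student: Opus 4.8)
The plan is to prove the formula $v_p(q_n) = \sum_{k=1}^n v_p(b_k(\alpha))$ by induction on $n$, using the matrix recursion from Definition~\ref{convergents1} together with Lemma~\ref{lemord}. The matrix product telescopes to the scalar recursion $q_n = b_n(\alpha) q_{n-1} + q_{n-2}$, with initial data $q_{-1}=0$, $q_0 = 1$; in particular $q_1 = b_1(\alpha)$, which already gives the base case $v_p(q_1) = v_p(b_1(\alpha))$. The key observation is that $v_p(q_{n-1})$ and $v_p(q_{n-2})$ are strictly separated in size, so that $v_p$ of the sum $b_n(\alpha)q_{n-1} + q_{n-2}$ is the minimum of the two valuations with no cancellation.

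First I would record, as the inductive hypothesis, that $v_p(q_k) = \sum_{j=1}^k v_p(b_j(\alpha))$ for all $k \le n-1$. Next I would use Lemma~\ref{lemord}: it gives $v_p(b_k(\alpha)) = v_p((\alpha_k)_{\langle p \rangle}) \le 0$ for all $k > 0$, and moreover $v_p(b_k(\alpha)) < 0$ whenever $k$ is odd. I would then compare $v_p(b_n(\alpha) q_{n-1})$ with $v_p(q_{n-2})$. By the inductive hypothesis these equal $v_p(b_n(\alpha)) + \sum_{j=1}^{n-1} v_p(b_j(\alpha))$ and $\sum_{j=1}^{n-2} v_p(b_j(\alpha))$ respectively, so their difference is $v_p(b_n(\alpha)) + v_p(b_{n-1}(\alpha))$. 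Since one of $n-1$, $n$ is odd, that index contributes a strictly negative valuation while the other contributes a nonpositive one, hence $v_p(b_n(\alpha)) + v_p(b_{n-1}(\alpha)) < 0$, i.e. $v_p(b_n(\alpha) q_{n-1}) < v_p(q_{n-2})$. The ultrametric inequality then forces $v_p(q_n) = v_p(b_n(\alpha) q_{n-1} + q_{n-2}) = v_p(b_n(\alpha) q_{n-1}) = v_p(b_n(\alpha)) + v_p(q_{n-1})$, which combined with the inductive hypothesis completes the step.

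I should handle the small-index bookkeeping carefully: the strict-separation argument above needs $n \ge 2$ so that both $b_{n-1}(\alpha)$ and $b_n(\alpha)$ are indexed by positive integers (Lemma~\ref{lemord} only speaks of $n > 0$); the case $n=1$ is the direct computation $q_1 = b_1(\alpha)$ noted above, and for $n=2$ one has $q_0 = 1$ with $v_p(q_0) = 0$, so $v_p(b_2(\alpha) q_1 + q_0)$ still needs $v_p(b_2(\alpha)) + v_p(b_1(\alpha)) < 0$, which holds because $b_1(\alpha)$ is odd-indexed. The main obstacle — really the only subtlety — is making sure the no-cancellation claim is justified by the \emph{strict} inequality between the two valuations rather than merely $\le$; this is exactly where the odd-index strictness in Lemma~\ref{lemord} is essential, and it is worth stating explicitly that at least one of any two consecutive positive indices is odd. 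Everything else is the standard ultrametric computation, so no lengthy calculation is required.
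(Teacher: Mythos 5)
Your proof is correct and follows essentially the same route as the paper: induction on $n$ with the recursion $q_n=b_n(\alpha)q_{n-1}+q_{n-2}$, using Lemma~\ref{lemord} to get $v_p(b_n(\alpha))+v_p(b_{n-1}(\alpha))<0$ (since one of two consecutive indices is odd) so that the ultrametric inequality gives $v_p(q_n)=v_p(b_n(\alpha)q_{n-1})$ without cancellation. Your explicit handling of the $n=1$ and $n=2$ cases matches the paper's base cases, so there is nothing to add.
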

\begin{proof}
We will prove the claim by induction on $n$.
By the definition, we have $v_p(q_1)=v_p(b_1(\alpha))$.  
From the fact that $q_2=b_2(\alpha)b_1(\alpha)+1$ and Lemma \ref{lemord},  
we have $v_p(q_2)=v_p(b_1(\alpha))+v_p(b_2(\alpha))$.
Let $n\geq 2$. We assume that the clam holds for $n$ and $n-1$. 
From Lemma \ref{lemord}, we have $v_p(b_{n+1}(\alpha))+v_p(b_{n}(\alpha))<0$.
Therefore, we have $v_p(q_{n+1})=v_p(b_{n+1}(\alpha)q_{n}+q_{n-1})=v_p(b_{n+1}(\alpha)q_{n})=\sum_{k=1}^{n+1}v_p(b_k(\alpha))$.
\end{proof}

By Lemmas \ref{lemord} and \ref{convergentsp}, we observe that $q_n \ne 0$ for all $n \in \mathbb{Z}_{> 0}$, implying that $\frac{p_n}{q_n}$ are well-defined.

Following lemma  gives a sufficient condition for the convergence of continued fractions. 
\begin{lem}[\cite{MRS}]\label{MRS}
Let $b_0, b_1,\ldots \in \mathbb{Z}[\frac{1}{p}]$ be an infinite sequence such that
\begin{align*}
v_p(b_nb_{n+1})<0,
\end{align*}
for all $n>0$.
Then, the continued fraction $\left[b_0; b_1, \ldots\right]$ is convergent to a $p$-adic number.
\end{lem}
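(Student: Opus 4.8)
The plan is to control the $p$-adic valuations of the denominators $q_n$ of the convergents and then deduce convergence from the ultrametric inequality and the completeness of $\mathbb{Q}_p$. Form the convergents $p_n/q_n$ of $[b_0;b_1,\ldots]$ exactly as in Definition~\ref{convergents1}, with $p_{-1}=1$, $q_{-1}=0$, $q_0=1$; from the matrix product one gets the usual recurrences $p_n=b_np_{n-1}+p_{n-2}$, $q_n=b_nq_{n-1}+q_{n-2}$, and, taking determinants, the identity $p_nq_{n-1}-p_{n-1}q_n=(-1)^{n-1}$. Since every $b_k\in\mathbb{Z}[\tfrac1p]$, all $p_n,q_n$ lie in $\mathbb{Z}[\tfrac1p]$.

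First I would establish $v_p(q_n)=\sum_{k=1}^n v_p(b_k)$ by induction on $n$; this is exactly the computation carried out in Lemma~\ref{convergentsp}, the only input being $v_p(b_kb_{k+1})<0$ for $k>0$. The base cases use $q_1=b_1$ and $q_2=b_2b_1+1$ together with $v_p(b_1b_2)<0$, and in the inductive step the hypothesis $v_p(b_{n-1}b_n)<0$ gives $v_p(b_nq_{n-1})=v_p(b_n)+\sum_{k=1}^{n-1}v_p(b_k)<\sum_{k=1}^{n-2}v_p(b_k)=v_p(q_{n-2})$, so $q_n=b_nq_{n-1}+q_{n-2}$ has valuation equal to that of its first summand; in particular $q_n\neq0$ and $p_n/q_n$ is well defined. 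Next, grouping $\sum_{k=1}^n v_p(b_k)$ into consecutive pairs and using $v_p(b_k)+v_p(b_{k+1})\le -1$ shows $\sum_{k=1}^{n}v_p(b_k)\to-\infty$, hence $v_p(q_nq_{n-1})=\sum_{k=1}^n v_p(b_k)+\sum_{k=1}^{n-1}v_p(b_k)\to-\infty$; moreover this sequence is strictly decreasing in $n$, its consecutive difference being $v_p(q_{n+1})-v_p(q_{n-1})=v_p(b_nb_{n+1})<0$.

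From the determinant identity, $\tfrac{p_n}{q_n}-\tfrac{p_{n-1}}{q_{n-1}}=\tfrac{(-1)^{n-1}}{q_nq_{n-1}}$, so $\bigl|\tfrac{p_n}{q_n}-\tfrac{p_{n-1}}{q_{n-1}}\bigr|_p=p^{\,v_p(q_nq_{n-1})}$ is strictly decreasing and tends to $0$. Writing $\tfrac{p_n}{q_n}=b_0+\sum_{k=1}^n\tfrac{(-1)^{k-1}}{q_kq_{k-1}}$ and applying the ultrametric inequality, for $m<n$ we get $\bigl|\tfrac{p_n}{q_n}-\tfrac{p_m}{q_m}\bigr|_p\le\max_{m<k\le n}\bigl|q_kq_{k-1}\bigr|_p^{-1}=\bigl|q_{m+1}q_m\bigr|_p^{-1}\to0$ as $m\to\infty$, so $\{p_n/q_n\}$ is Cauchy in $\mathbb{Q}_p$ and therefore converges by completeness. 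The step I would be most careful about is the induction giving $v_p(q_n)=\sum_{k=1}^n v_p(b_k)$: this is the only place the hypothesis $v_p(b_nb_{n+1})<0$ enters, and it is what guarantees both that no cancellation occurs in $b_nq_{n-1}+q_{n-2}$ and that $q_n\neq0$; once that is in hand, the remainder is a routine ultrametric estimate.
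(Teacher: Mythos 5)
Your proof is correct and complete. Note that the paper does not prove this lemma itself---it is imported from \cite{MRS}---so there is no internal proof to compare against; your argument is the standard one, and its only nontrivial ingredient, the identity $v_p(q_n)=\sum_{k=1}^{n}v_p(b_k)$ (which forces $q_n\neq 0$ and $v_p(q_nq_{n-1})\to-\infty$), is exactly the computation the paper carries out in Lemma~\ref{convergentsp}; the telescoping determinant identity together with the ultrametric inequality then gives the Cauchy property and convergence in $\mathbb{Q}_p$.
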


From Lemma \ref{lemord} and \ref{MRS}, we see  that $\left[b_0(\alpha);b_1(\alpha),\ldots\right]$
is convergent to a $p$-adic number. 
The equality $\alpha_{\langle p \rangle}=\left[b_0(\alpha); b_1(\alpha),\ldots\right]$ can be obtained later.

\begin{lem}\label{ineq1}
Let $\alpha\in K$.
Then, $\alpha_{\infty}-b^{(0)}(\alpha) \in \left(-\frac{p}{2},\frac{p}{2}\right]$ and $\alpha_{\infty}-b^{(1)}(\alpha)
\in \left(-\frac{1}{2},\frac{1}{2}\right]$.
\end{lem}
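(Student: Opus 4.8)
The plan is to reduce both assertions to the elementary fact that every real number has a nearest point in a scaled lattice $r\mathbb{Z}$, combined with the tie-breaking convention built into Definition~\ref{transformation}. Write $c := \left\lfloor\alpha_{\langle p \rangle}\right\rfloor_p$, regarded as a (rational) real number, and set $t := \alpha_{\infty} - c$. By definition $b^{(0)}(\alpha) = mp + c$, where $m$ is the smallest integer minimizing $i \mapsto |t - ip|$, and $b^{(1)}(\alpha) = m' + c$, where $m'$ is the smallest integer minimizing $i \mapsto |t - i|$. Hence $\alpha_{\infty} - b^{(0)}(\alpha) = t - mp$ and $\alpha_{\infty} - b^{(1)}(\alpha) = t - m'$, so it suffices to prove $t - mp \in \left(-\tfrac{p}{2}, \tfrac{p}{2}\right]$ and $t - m' \in \left(-\tfrac12, \tfrac12\right]$.

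First I would handle $b^{(0)}$. Let $d := \inf_{i \in \mathbb{Z}} |t - ip|$. Since $p\mathbb{Z}$ is discrete and unbounded in both directions, this infimum is attained, and $0 \le d \le \tfrac{p}{2}$. If $d < \tfrac{p}{2}$, there is a unique minimizer $i_0$, and then $t - i_0 p \in \left(-\tfrac{p}{2}, \tfrac{p}{2}\right)$; since $m = i_0$, we are done. If $d = \tfrac{p}{2}$, exactly two consecutive integers minimize, say $i_0$ and $i_0 + 1$, with $t - i_0 p = \tfrac{p}{2}$ and $t - (i_0 + 1)p = -\tfrac{p}{2}$; the convention ``let $m$ be the smallest value'' forces $m = i_0$, so $t - mp = \tfrac{p}{2} \in \left(-\tfrac{p}{2}, \tfrac{p}{2}\right]$. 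In every case $\alpha_{\infty} - b^{(0)}(\alpha) = t - mp \in \left(-\tfrac{p}{2}, \tfrac{p}{2}\right]$.

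The case of $b^{(1)}$ is the same argument with $p$ replaced by $1$ (the lattice $p\mathbb{Z}$ replaced by $\mathbb{Z}$): with $d' := \min_{i} |t - i| \in \left[0, \tfrac12\right]$, either $d' < \tfrac12$ and the unique minimizer $m'$ gives $t - m' \in \left(-\tfrac12, \tfrac12\right)$, or $d' = \tfrac12$, the two minimizers are consecutive integers $i_0 < i_0 + 1$ with $t - i_0 = \tfrac12$, and the tie-break selects $m' = i_0$, whence $t - m' = \tfrac12 \in \left(-\tfrac12, \tfrac12\right]$. Thus I would in fact phrase a single auxiliary statement: for $t \in \mathbb{R}$ and $r > 0$, the smallest integer $m$ minimizing $i \mapsto |t - ir|$ satisfies $t - mr \in \left(-\tfrac r2, \tfrac r2\right]$, and then apply it with $r = p$ and $r = 1$. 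There is no real obstacle here; the only delicate point is the boundary case, where one must check that the ``smallest admissible integer'' rule is exactly what makes the target interval half-open on the right — the value $+\tfrac r2$ is produced by the smaller of the two minimizing integers and is therefore retained, while $-\tfrac r2$ is produced by the larger one and is discarded.
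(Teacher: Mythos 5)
Your proposal is correct and follows essentially the same route as the paper: bound the distance to the nearest point of the translated lattice by half the spacing, then use the ``smallest minimizing integer'' convention to show that in a tie the value $+\tfrac r2$ (not $-\tfrac r2$) is the one retained, which is exactly the paper's exclusion of the left endpoint. The only cosmetic difference is that you package the two cases into one auxiliary statement for $r\mathbb{Z}$ and argue the boundary case directly, while the paper treats $p$ and $1$ separately and handles the endpoint by contradiction.
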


\begin{proof}
From Definition \ref{transformation} we see that
\begin{align*}
&|\alpha_{\infty}-b^{(0)}(\alpha)|=\min \{|\alpha_{\infty}-ip-\left\lfloor\alpha_{\langle p \rangle} \right\rfloor_p|\mid i\in \mathbb{Z}\},\\
&|\alpha_{\infty}-b^{(1)}(\alpha)|=\min \{|\alpha_{\infty}-i-\left\lfloor\alpha_{\langle p \rangle} \right\rfloor_p|\mid i\in \mathbb{Z}\},
\end{align*}
which imply that $|\alpha_{\infty}-b^{(0)}(\alpha)|\leq \frac{p}{2}$ and $|\alpha_{\infty}-b^{(1)}(\alpha)|\leq \frac{1}{2}$.
If it holds that $\alpha_{\infty}-b^{(0)}(\alpha)=-\frac{p}{2}$, 
since  it holds that $\alpha_{\infty}-(b^{(0)}(\alpha)-p)=\frac{p}{2}$, which contradicts
the definition of  $b^{(0)}(\alpha)$.
 Therefore, we have $\alpha_{\infty}-b^{(0)}(\alpha) \in \left(-\frac{p}{2},\frac{p}{2}\right]$.
 Similarly, we have  $\alpha_{\infty}-b^{(1)}(\alpha)
\in \left(-\frac{1}{2},\frac{1}{2}\right]$.
\end{proof}

We will present the following theorem using the same argument as in \cite{MR}.

\begin{thm}\label{rational}
Let $\alpha\in \mathbb{Q}$.
Then, $\{\alpha_n\}$ is a finite sequence.
\end{thm}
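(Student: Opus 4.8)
The plan is to follow the strategy of \cite{MR}: attach to each iterate $\alpha_n$ a positive integer --- the prime-to-$p$ part of its denominator --- and show this integer strictly decreases along the algorithm, forcing termination. Since here $K=\mathbb{Q}$ with the canonical embeddings, $(\alpha_n)_\infty$ and $(\alpha_n)_{\langle p\rangle}$ are simply $\alpha_n$ viewed in $\mathbb{R}$ and in $\mathbb{Q}_p$, and $F_{p,0},F_{p,1}$ map $\mathbb{Q}$ into $\mathbb{Q}$. Assume for contradiction that $\{\alpha_n\}$ is infinite; then every $\alpha_n$ is a nonzero rational for $n\ge 1$, and we may write it uniquely as $\alpha_n=p^{v_n}u_n/w_n$ with $v_n=v_p(\alpha_n)$, $w_n\in\mathbb{Z}_{>0}$, $\gcd(u_n,w_n)=1$, and $p\nmid u_nw_n$. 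As a preliminary, note that when $v_n<0$ one has $\lfloor(\alpha_n)_{\langle p\rangle}\rfloor_p=N/p^{-v_n}$ with $p\nmid N$, while when $v_n\ge 0$ it is an integer; consequently $b^{(0)}(\alpha_n)$ --- and likewise $b^{(1)}(\alpha_n)$, which is only ever formed at odd $n$, where $v_n<0$ by Lemma~\ref{lemord} --- is obtained from $\lfloor(\alpha_n)_{\langle p\rangle}\rfloor_p$ by adding $mp$ resp.\ $m$, hence has the shape $M/p^{-v_n}$ with $p\nmid M$ when $v_n<0$, and is an integer when $v_n\ge 0$.

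The crux is the claim that $w_{n+1}\le w_n/2$ whenever $\alpha_{n+1}$ is defined. For even $n$ with $v_n<0$ one computes $\alpha_n-b^{(0)}(\alpha_n)=(u_n-Mw_n)/(p^{-v_n}w_n)$; by Lemma~\ref{lemord} and the definition of $\lfloor\cdot\rfloor_p$ this has $p$-adic valuation $\ge 1$, so $u_n-Mw_n=p^{1-v_n+j}\tilde v$ for some $j\ge 0$ and $\tilde v$ with $p\nmid\tilde v$, whence $\alpha_{n+1}=w_n/(p^{1+j}\tilde v)$ and, since $\gcd(w_n,u_n)=1$ forces $\gcd(w_n,\tilde v)=1$, the reduced prime-to-$p$ denominator is $w_{n+1}=|\tilde v|$. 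Now Lemma~\ref{ineq1} gives $|\alpha_n-b^{(0)}(\alpha_n)|\le p/2$, hence $|u_n-Mw_n|=w_n\,p^{-v_n}\,|\alpha_n-b^{(0)}(\alpha_n)|\le w_np^{1-v_n}/2$, and dividing by $p^{1-v_n+j}$ yields $|\tilde v|\le w_n/2$. The odd-$n$ case runs identically with $F_{p,1}$, with valuation $\ge 0$ in place of $\ge 1$, and with the sharper estimate $|\alpha_n-b^{(1)}(\alpha_n)|\le 1/2$ from Lemma~\ref{ineq1}; the remaining case $v_n\ge 0$ (which by Lemma~\ref{lemord} occurs only at even $n$, with $b^{(0)}(\alpha_n)$ an integer) is a direct and easier check of the same kind.

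Granting the claim, $w_0,w_1,w_2,\dots$ would be a strictly decreasing sequence of positive integers for as long as the algorithm runs, which is impossible; hence some $F_{p,0}(\alpha_n)$ or $F_{p,1}(\alpha_n)$ is undefined and $\{\alpha_n\}$ is finite (of length at most $\log_2 w_0$). I expect the only real difficulty to be bookkeeping: one must verify that adding $mp$ or $m$ does not destroy the normalization $p\nmid M$, and that the reduced denominator of $\alpha_{n+1}$ is exactly $|\tilde v|$. Once the normalization is fixed, the decrease of $w_n$ falls straight out of Lemmas~\ref{lemord} and~\ref{ineq1} with no further input.
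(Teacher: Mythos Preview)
Your proposal is correct and follows essentially the same route as the paper: both track the prime-to-$p$ part of the denominator of $\alpha_n$ (your $w_n$, the paper's $|D_n|$) and use Lemma~\ref{lemord} together with the size bounds of Lemma~\ref{ineq1} to force a strict decrease, yielding a contradiction. Your estimate $w_{n+1}\le w_n/2$ at every step is in fact slightly sharper than the paper's, which only extracts $|D_{2n+1}|\le |D_{2n}|/2$ at even steps and $|D_{2n+2}|<|D_{2n+1}|$ at odd steps (the paper uses the crude bound $|\alpha_{2n+1}-b_{2n+1}(\alpha)|<1$ rather than $\le 1/2$ there); this buys you a cleaner termination bound but is otherwise the same argument, and the bookkeeping you flag is indeed routine.
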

\begin{proof}
We assume that $\{\alpha_n\}$ is an infinite sequence.
Let $n\in \mathbb{Z}_{> 0}$.
From Lemma \ref{lemord}, we can set
\begin{align}
 &\alpha_{n}=\dfrac{N_{n}}{D_{n}p^{i_n}}, \ \ \text{with $(N_{n},D_{n})=1$ and $p \nmid N_{n}D_{n}$,}\label{ND}\\
&b_n(\alpha)=\dfrac{c_n}{p^{i_n}}, \ \ \text{with $p \nmid c_n$} \nonumber,
\end{align}
where $N_{n}$, $D_{n}$, and $c_n$ are integers.

 From Lemma \ref{lemord},  we have $i_{2n}\geq 0$, $i_{2n+1}> 0$, and $i_{2n+2}\geq 0$.  
From the fact that $\alpha_{k+1}=\frac{1}{\alpha_{k}-b_k(\alpha)}$ for $k=2n, 2n+1$, we have
\begin{align}
&N_{2n+1}(N_{2n}-c_{2n}D_{2n})=p^{i_{2n}+i_{2n+1}}D_{2n+1}D_{2n},\label{ND2}\\
&N_{2n+2}(N_{2n+1}-c_{2n+1}D_{2n+1})=p^{i_{2n+1}+i_{2n+2}}D_{2n+2}D_{2n+1}.\nonumber
\end{align}
From (\ref{ND}) and (\ref{ND2}), we have
\begin{align*}
&|N_{2n+1}|=|D_{2n}|,\\
&|N_{2n+2}|=|D_{2n+1}|,\\ 
&p^{i_{2n}+i_{2n+1}}|D_{2n+1}|=|N_{2n}-c_{2n}D_{2n}|,\\
&p^{i_{2n+1}+i_{2n+2}}|D_{2n+2}|=|N_{2n+1}-c_{2n+1}D_{2n+1}|.
\end{align*}
Therefore, using Lemma \ref{ineq1}, we have
\begin{align*}
&p^{i_{2n}+i_{2n+1}}|D_{2n+1}|=|N_{2n}-c_{2n}D_{2n}|\\
&=p^{i_{2n}}|D_{2n}|\left|\dfrac{N_{2n}}{D_{2n}p^{i_{2n}}}-\dfrac{c_{2n}}{p^{i_{2n}}}\right|\\
&\leq \dfrac{p^{i_{2n}+1}|D_{2n}|}{2},
\end{align*}
which implies 
\begin{align}\label{D2n+1}
|D_{2n+1}|\leq \dfrac{|D_{2n}|}{2p^{i_{2n+1}-1}}.
\end{align}
Similarly, we have
\begin{align*}
&p^{i_{2n+1}+i_{2n+2}}|D_{2n+2}|=|N_{2n+1}-c_{2n+1}D_{2n+1}|\\
&=p^{i_{2n+1}}|D_{2n+1}|\left|\dfrac{N_{2n+1}}{p^{i_{2n+1}}D_{2n+1}}-\dfrac{c_{2n+1}}{p^{i_{2n+1}}}\right|\\
&<p^{i_{2n+1}}|D_{2n+1}|,
\end{align*}
which implies 
\begin{align}\label{D2n+2}
|D_{2n+2}|<\dfrac{|D_{2n+1}|}{p^{i_{2n+2}}}.
\end{align}
From Lemma \ref{lemord}, we have $i_{2n+1}\geq 1$ and $i_{2n+2}\geq 0$.
Therefore, by inequalities (\ref{D2n+1}) and (\ref{D2n+2}), we obtain  inequalities
\begin{align*}
|D_{2n+1}|\leq \dfrac{|D_{2n}|}{2} \text{ and }|D_{2n+2}|< |D_{2n+1}|.
\end{align*}
Therefore, the sequence $\{|D_n|\}$ is strictly decreasing, which 
is a contradiction.
\end{proof}

\begin{example}
We give  the continued fraction expansion of $\dfrac{5}{13}$ for some prime numbers by the algorithm.
\begin{align*}
\begin{array}{ll}
\left[1;-\dfrac{13}{8}\right]&\text{ $p=2$},\\
&\\
\left[-1;\dfrac{2}{9},2\right]&\text{ $p=3$},\\
&\\
\left[0;\dfrac{13}{5}\right]&\text{ $p=5$},\\
&\\
\left[2;-\dfrac{2}{7},-3\right]&\text{ $p=7$}.
\end{array}
\end{align*}

\end{example}

By considering the continued fraction expansion of $\alpha-b_0(\alpha)$, we are able to confine the initial point to a limited range, thus allowing us to investigate the counterpart of the Gauss map in the regular continued fraction.

\begin{defn}\label{transformationT}
We define a map $T_{p,0}:K^{\times}\to K$ by
for $\alpha\in K^{\times}$
\begin{align*}
T_{p,0}(\alpha):=\dfrac{1}{\alpha}-mp-\left\lfloor \left(\dfrac{1}{\alpha}\right)_{\langle p \rangle} \right\rfloor_p,
\end{align*}
where among $i \in \mathbb{Z}$ for which $|\frac{1}{\alpha}-ip-\lfloor \left(\frac{1}{\alpha}\right)_{\langle p \rangle} \rfloor_p|$ attains its minimum, let $m$ be the smallest value.
We define $a^{(0)}(\alpha):=mp+\left\lfloor \frac{1}{\alpha} \right\rfloor_p$.
We define a map $T_{p,1}:K^{\times}\to K$ by
for $\alpha\in K^{\times}$
\begin{align*}
T_{p,1}(\alpha):=\dfrac{1}{\alpha}-m-\left\lfloor \left(\dfrac{1}{\alpha}\right)_{\langle p \rangle}  \right\rfloor_p,
\end{align*}
where among $i \in \mathbb{Z}$ for which $|\frac{1}{\alpha}-i-\left\lfloor (\frac{1}{\alpha})_{\langle p \rangle} \right\rfloor_p|$
attains its minimum, let $m$ be the smallest value.
We define $a^{(1)}(\alpha):=m+\left\lfloor (\frac{1}{\alpha})_{\langle p \rangle} \right\rfloor_p$.
\end{defn}

From Lemma \ref{ineq1}, we observe that $\alpha_{\infty}-b^{(0)}(\alpha) \in \left(-\frac{p}{2},\frac{p}{2}\right]$ and it is evident that $v_p(\alpha-b_0(\alpha))>0$.

\begin{defn}\label{property I}
We say that $\alpha$ has {\it property $I$}, if  it satisfies $\alpha\in \left(-\frac{p}{2},\frac{p}{2}\right]$ and $v_p(\alpha_{\langle p \rangle})>0$.
\end{defn}

Let $\alpha\in K/\mathbb{Q}$ with property $I$.

\begin{defn}\label{algol2}
We define an algorithm  as follows:
Let $\alpha=\alpha_{(1)} \in K$. 
We define $\{\alpha_{(n)}\}_{n\in \mathbb{Z}_{> 0}}$ and $\{a_n(\alpha)\}_{n\in \mathbb{Z}_{> 0}}$recursively as follows:\\
If $\alpha_{(n)}\ne 0$,
\begin{align*}
&\alpha_{(n+1)}:=\begin{cases}
T_{p,0}(\alpha_{(n)})& \text{if $n\equiv 0 \mod\ 2$},\\
T_{p,1}(\alpha_{(n)})& \text{if $n\equiv 1 \mod\ 2$},\\
\end{cases}\\
&\text{and}\\
&a_{n}(\alpha):=\begin{cases}
a^{(0)}(\alpha_n)& \text{if $n\equiv 0 \mod\ 2$},\\
a^{(1)}(\alpha_n)& \text{if $n\equiv 1 \mod\ 2$}.
\end{cases}
\end{align*}
If $\alpha_{n}=0$, terminate.
\end{defn}

Namely, we consider the following continued fraction expansion:
\begin{align*}
\cfrac{1}{a_1(\alpha) + \cfrac{1}{a_2(\alpha) + \cfrac{1}{a_3(\alpha)+\cfrac{1}{\ldots}}}}.
\end{align*}

We define convergents of continued fraction in the usual manner.
In the sequel, we assume that $\alpha\in K/\mathbb{Q}$ with property $I$.

\begin{defn}\label{convergents}
For $n\in \mathbb{Z}_{> 0}$, $p_n$ and $q_n$ are defined by
\begin{align*}
\begin{pmatrix}
p_{n-1}&p_{n}\\
q_{n-1}&q_{n}
\end{pmatrix}
=
\begin{pmatrix}
0&1\\
1&a_1(\alpha)
\end{pmatrix}
\begin{pmatrix}
0&1\\
1&a_2(\alpha)
\end{pmatrix}
\cdots
\begin{pmatrix}
0&1\\
1&a_n(\alpha)
\end{pmatrix},
\end{align*}
where we define $p_{0}=0$ and $q_{0}=1$.
\end{defn} 
We note that $p_n$ and $q_n$ are  in  $\mathbb{Z}[\frac{1}{p}]$ for 
all $n\in \mathbb{Z}_{\geq 0}$.

\begin{lem}\label{twoalgol}
Let $\alpha\in K$ with property $I$.
Then, $b^{(0)}(\alpha)=0$.
\end{lem}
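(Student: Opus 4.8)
The claim is that if $\alpha \in K$ has property $I$, then $b^{(0)}(\alpha) = 0$. Recall that by definition $b^{(0)}(\alpha) = mp + \lfloor \alpha_{\langle p \rangle}\rfloor_p$, where $m$ is the smallest integer minimizing $|\alpha_\infty - ip - \lfloor \alpha_{\langle p\rangle}\rfloor_p|$ over $i \in \mathbb Z$. Property $I$ says $\alpha \in \left(-\frac{p}{2}, \frac{p}{2}\right]$ and $v_p(\alpha_{\langle p\rangle}) > 0$. The plan is to show that under property $I$ we have $\lfloor \alpha_{\langle p\rangle}\rfloor_p = 0$ and $m = 0$.

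First I would observe that $v_p(\alpha_{\langle p\rangle}) > 0$ means the $p$-adic expansion of $\alpha_{\langle p\rangle}$ has no terms $c_n p^n$ with $n \le 0$, so by the very definition of $\lfloor \cdot \rfloor_p$ (the sum over $n \in \mathbb Z_{\le 0}$ of $c_n p^n$) we get $\lfloor \alpha_{\langle p\rangle}\rfloor_p = 0$. Hence $b^{(0)}(\alpha) = mp$, and it remains to show $m = 0$. By definition $m$ is the smallest integer among those minimizing $|\alpha_\infty - ip|$ over $i \in \mathbb Z$; so I must show $0$ is the unique minimizer, i.e. that for every nonzero integer $i$ we have $|\alpha_\infty - ip| > |\alpha_\infty|$. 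Since property $I$ gives $\alpha_\infty = \alpha \in \left(-\frac{p}{2}, \frac{p}{2}\right]$ (note $\sigma_\infty$ is an embedding, so $\alpha$ real means $\alpha_\infty = \alpha$), we have $|\alpha_\infty| \le \frac{p}{2}$, and for $i \ne 0$, $|\alpha_\infty - ip| \ge |i|p - |\alpha_\infty| \ge p - \frac{p}{2} = \frac{p}{2} \ge |\alpha_\infty|$.

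The one subtlety — and the main point requiring care — is the boundary case $\alpha_\infty = \frac{p}{2}$, where the inequality $|\alpha_\infty - ip| \ge |\alpha_\infty|$ is not strict for $i = 1$: indeed $|\frac{p}{2} - p| = \frac{p}{2}$. Here both $i = 0$ and $i = 1$ attain the minimum, and the tie-breaking rule "$m$ the smallest value" selects $m = 0$ over $m = 1$, so we still get $b^{(0)}(\alpha) = 0$. This mirrors exactly the tie-breaking argument already used in the proof of Lemma \ref{ineq1}, where the half-open interval $\left(-\frac p2, \frac p2\right]$ was obtained. For all other $i$ (i.e. $|i| \ge 2$, or $i = -1$) the inequality $|\alpha_\infty - ip| > |\alpha_\infty|$ is strict as shown above, so no smaller minimizer than $0$ exists. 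Therefore $m = 0$ and $b^{(0)}(\alpha) = 0$. I do not anticipate any genuine obstacle here; the proof is a short direct computation once the definitions of $\lfloor\cdot\rfloor_p$ and the tie-breaking convention are unwound, and the boundary case is handled just as in Lemma \ref{ineq1}.
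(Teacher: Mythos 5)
Your proof is correct and follows essentially the same route as the paper: $v_p(\alpha_{\langle p\rangle})>0$ forces $\lfloor \alpha_{\langle p\rangle}\rfloor_p=0$, and then $\alpha_\infty\in\left(-\frac{p}{2},\frac{p}{2}\right]$ forces the smallest minimizer $m$ to be $0$. The paper simply states this second step without spelling out the estimate $|\alpha_\infty-ip|\geq p-\frac{p}{2}$ and the tie at $\alpha_\infty=\frac{p}{2}$, which you handle explicitly and correctly via the tie-breaking convention.
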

\begin{proof}
From the fact that $v_p(\alpha_{\langle p \rangle})>0$, we have 
$\left\lfloor \alpha_{\langle p \rangle} \right\rfloor_p=0$.
Since $-\frac{p}{2}<\alpha_{\infty}\leq \frac{p}{2}$, 
we have 
$b^{(0)}(\alpha)=p\min\{i \mid |\alpha_{\infty}-ip|=\min\{|\alpha_{\infty}-jp|\mid j\in \mathbb{Z}\}\}=0$.

\end{proof}

\begin{cor}\label{rational2}
Let $\alpha\in \mathbb{Q}$ with property $I$.
Then, $\{\alpha_{(n)}\}$ is a finite sequence.
\end{cor}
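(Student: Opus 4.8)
The plan is to deduce this from Theorem \ref{rational} by relating the two algorithms of Definitions \ref{algol1} and \ref{algol2}. The key observation is Lemma \ref{twoalgol}: if $\alpha \in K$ has property $I$, then $b^{(0)}(\alpha) = 0$, so applying the first step of the algorithm in Definition \ref{algol1} to such an $\alpha$ gives $\alpha_1 = F_{p,0}(\alpha) = 1/(\alpha - 0) = 1/\alpha$, and then $\alpha_2 = F_{p,1}(\alpha_1)$, which is exactly $T_{p,0}(\alpha) = \alpha_{(2)}$ after matching the definitions of $F_{p,1}$ and $T_{p,0}$. More generally I would check, by an easy induction on $n$, that the sequence $\{\alpha_{(n)}\}_{n \geq 1}$ produced by Definition \ref{algol2} starting from $\alpha$ coincides (up to the index shift and the inserted $b_0 = 0$ term) with the tail $\{\alpha_n\}_{n \geq 1}$ produced by Definition \ref{algol1} starting from the same $\alpha$: the parity conventions in the two definitions are arranged precisely so that $T_{p,0}$ plays the role of $F_{p,0}$ composed appropriately and $T_{p,1}$ that of $F_{p,1}$. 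Hence the two algorithms terminate on the same inputs.

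Concretely, first I would record that for $\alpha$ with property $I$, $\lfloor \alpha_{\langle p \rangle}\rfloor_p = 0$ (since $v_p(\alpha_{\langle p \rangle}) > 0$), so by Lemma \ref{twoalgol} the step $n=0$ of Definition \ref{algol1} contributes $b_0(\alpha) = 0$ and sends $\alpha \mapsto 1/\alpha$. Now $1/\alpha$ has $v_p((1/\alpha)_{\langle p \rangle}) < 0$, and from this point on the recursion $\alpha_{n+1} = F_{p,1}(\alpha_n)$ (for $n$ odd), $F_{p,0}(\alpha_n)$ (for $n$ even) matches term-for-term the recursion defining $\alpha_{(n+1)}$ from $\alpha_{(n)}$ with $\alpha_{(1)} = \alpha$, because $T_{p,0}(\beta) = 1/\beta - mp - \lfloor (1/\beta)_{\langle p\rangle}\rfloor_p$ is nothing but the "value part" of $F_{p,0}(1/\beta)$ shifted, and similarly for $T_{p,1}$. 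Second, since $\alpha \in \mathbb{Q}$, Theorem \ref{rational} guarantees $\{\alpha_n\}$ is finite; by the identification just described, $\{\alpha_{(n)}\}$ is then also finite. Third, I should separately account for the termination conventions: the algorithm of Definition \ref{algol2} terminates when $\alpha_{(n)} = 0$, while that of Definition \ref{algol1} terminates when $F_{p,0}$ or $F_{p,1}$ is undefined, i.e. when the relevant $\alpha_n - b_n(\alpha) = 0$; these correspond under the identification, so finiteness transfers cleanly.

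The main obstacle is bookkeeping rather than conceptual: I must get the index shift and the even/odd parity matching exactly right, and verify that the "smallest minimizer $m$" selection rule in $T_{p,j}$ really does reproduce the one in $F_{p,j}$ after the substitution $\beta \mapsto 1/\beta$. In particular one must confirm that $(1/\alpha)_{\infty} = 1/\alpha_{\infty}$ and $(1/\alpha)_{\langle p\rangle} = 1/\alpha_{\langle p\rangle}$ so that the minimization problems defining $m$ are literally identical in the two settings — this is immediate since $\sigma_\infty$ and $\sigma_p$ are field embeddings. Once this dictionary between the two algorithms is in place, the corollary is an immediate consequence of Theorem \ref{rational}, and no further estimates are needed.
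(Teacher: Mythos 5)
Your overall strategy is the paper's: use Lemma \ref{twoalgol} to see that the first step of Algorithm (Definition \ref{algol1}) applied to an $\alpha$ with property $I$ sends $\alpha$ to $1/\alpha$, identify the subsequent evolution with Algorithm (Definition \ref{algol2}), and transfer finiteness from Theorem \ref{rational}. However, the dictionary you actually write down is wrong, and it is the only substantive step of the proof. First, by Definition \ref{algol2} the step from $\alpha_{(1)}$ to $\alpha_{(2)}$ uses $T_{p,1}$ (the index $n=1$ is odd), not $T_{p,0}$, so the asserted identity $\alpha_2=F_{p,1}(\alpha_1)=T_{p,0}(\alpha)=\alpha_{(2)}$ is doubly incorrect. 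Second, and more importantly, the two state sequences do \emph{not} coincide ``up to an index shift'': they are related by reciprocation. Since $T_{p,\epsilon}(\beta)=\tfrac{1}{\beta}-a^{(\epsilon)}(\beta)$ with $a^{(\epsilon)}(\beta)=b^{(\epsilon)}(1/\beta)$, one has $T_{p,\epsilon}(1/\gamma)=\gamma-b^{(\epsilon)}(\gamma)=1/F_{p,\epsilon}(\gamma)$, and hence the correct statement (the one the paper proves by induction) is $\alpha_{(n)}=1/\alpha_{n}$ for $n\geq 1$, with the same parity label $\tau(n)$ governing step $n$ in both algorithms; e.g. $\alpha_{(2)}=T_{p,1}(\alpha)=\alpha_1-b^{(1)}(\alpha_1)=1/\alpha_2$. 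An induction based on your claimed identification (equality of the sequences after a shift) fails at the very first step, so as written the argument does not go through.

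Once the identification is corrected to $\alpha_{(n)}=1/\alpha_{n}$, the rest of your plan is fine and coincides with the paper's proof: Theorem \ref{rational} gives that $\{\alpha_n\}$ is finite, terminating at some index $k$ with $\alpha_k-b_k(\alpha)=0$, and under the reciprocal dictionary this forces the sequence $\{\alpha_{(n)}\}$ to terminate as well (the next iterate of Algorithm (Definition \ref{algol2}) would be $\alpha_k-b_k(\alpha)=0$, which is exactly its stopping condition). Your remark that $\sigma_\infty$ and $\sigma_p$ are field embeddings, so the minimization problems defining $m$ match after the substitution $\beta\mapsto 1/\beta$, is correct and is what makes $a^{(\epsilon)}(\beta)=b^{(\epsilon)}(1/\beta)$; you should make that the backbone of the induction rather than an afterthought.
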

\begin{proof}
From Theorem \ref{rational},  we see that $\{\alpha_{n}\}$ is a finite sequence, where
the sequence is generated by  Algorithm (Definition \ref{algol1}) to $\alpha$.
Let $\{\alpha_{n}\}=\{\alpha_{n}\}_{n=0}^k$.
From Lemma \ref{twoalgol}, we have $\alpha_1=\frac{1}{\alpha_{0}}=\frac{1}{\alpha_{(1)}}$.
Through induction, we establish the following for $n=1,\ldots, k$: $\alpha_k=\frac{1}{\alpha_{(k)}}$. Considering the fact that $\alpha_{k}-b_{k}(\alpha)=0$, we can conclude that $\{\alpha_{(n)}\}=\{(\alpha_{(n)})_{n=1}^k\}$.
\end{proof}

The following lemmas are evident from Definitions \ref{transformationT} and \ref{algol2}.
\begin{lem}\label{alphainK}
Let $\alpha\in K/\mathbb{Q}$ with property $I$.
Then, for $n\in \mathbb{Z}_{> 0}$, if  $n\equiv 0 \mod\ 2$, then $|(\alpha_{(n)})_{\infty}|<\frac{1}{2}$and
if  $n\equiv 1 \mod\ 2$, then $|(\alpha_{(n)})_{\infty}|<\dfrac{p}{2}$.
\end{lem}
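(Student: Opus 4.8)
The plan is an induction on $n$ that carries along, alongside the desired bound on $|(\alpha_{(n)})_\infty|$, the auxiliary fact that $\alpha_{(n)}\in K/\mathbb{Q}$ (that is, $\alpha_{(n)}\notin\mathbb{Q}$). This auxiliary fact is what upgrades the weak inequalities $|(\alpha_{(n)})_\infty|\le\tfrac12$ (resp.\ $\le\tfrac p2$) coming from Lemma \ref{ineq1} to strict ones, and it also guarantees $\alpha_{(n)}\ne0$, so the algorithm of Definition \ref{algol2} never terminates and $\alpha_{(n)}$ is defined for every $n\in\mathbb{Z}_{>0}$.

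First I would record the elementary observation that, on comparing Definitions \ref{transformation} and \ref{transformationT}, for every $\beta\in K^{\times}$ one has $a^{(0)}(\beta)=b^{(0)}(1/\beta)$ and $a^{(1)}(\beta)=b^{(1)}(1/\beta)$ (the minimizing integer $m$ is the same in the two definitions), so that $T_{p,0}(\beta)=\tfrac1\beta-b^{(0)}(1/\beta)$ and $T_{p,1}(\beta)=\tfrac1\beta-b^{(1)}(1/\beta)$. Applying Lemma \ref{ineq1} to the element $1/\beta\in K$ then gives $(T_{p,0}(\beta))_\infty\in(-\tfrac p2,\tfrac p2]$ and $(T_{p,1}(\beta))_\infty\in(-\tfrac12,\tfrac12]$.

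For the induction, the base case $n=1$ is immediate: $\alpha_{(1)}=\alpha\in K/\mathbb{Q}$ by hypothesis, and property $I$ gives $\alpha_\infty\in(-\tfrac p2,\tfrac p2]$. For the inductive step, if $\alpha_{(n)}\in K/\mathbb{Q}$ then $\alpha_{(n)}\ne0$, so $\alpha_{(n+1)}$ is defined and equals $\tfrac1{\alpha_{(n)}}-a^{(0)}(\alpha_{(n)})$ or $\tfrac1{\alpha_{(n)}}-a^{(1)}(\alpha_{(n)})$; in either case the subtracted term lies in $\mathbb{Z}[\tfrac1p]\subset\mathbb{Q}$ while $\tfrac1{\alpha_{(n)}}\in K/\mathbb{Q}$, hence $\alpha_{(n+1)}\in K/\mathbb{Q}$. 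As for the bound, by the parity bookkeeping in Definition \ref{algol2} an even index $n\ge2$ has $\alpha_{(n)}=T_{p,1}(\alpha_{(n-1)})$ and an odd index $n\ge3$ has $\alpha_{(n)}=T_{p,0}(\alpha_{(n-1)})$, so the observation above gives $(\alpha_{(n)})_\infty\in(-\tfrac12,\tfrac12]$ for even $n$ and $(\alpha_{(n)})_\infty\in(-\tfrac p2,\tfrac p2]$ for odd $n$ (the case $n=1$ included). Finally, since $\sigma_\infty$ is a field embedding fixing $\mathbb{Q}$ while $\alpha_{(n)}\notin\mathbb{Q}$, the equalities $(\alpha_{(n)})_\infty=\tfrac12$ and $(\alpha_{(n)})_\infty=\tfrac p2$ are impossible (they would force $\alpha_{(n)}=\tfrac12$, resp.\ $\tfrac p2$, in $\mathbb{Q}$); hence the right endpoint is excluded and the inequalities are strict, which is the claim.

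I do not anticipate a genuine obstacle: the statement is close to a direct reading of Lemma \ref{ineq1}. The only points that need a little care are keeping straight which of $T_{p,0}$, $T_{p,1}$ is applied at step $n$, and the propagation of non-rationality along the orbit — the latter being precisely the role of the hypothesis $\alpha\in K/\mathbb{Q}$, which is what rules out the endpoints $\tfrac12$, $\tfrac p2$ and makes the bounds strict.
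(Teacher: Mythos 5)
Your proof is correct and is essentially the argument the paper has in mind: the paper gives no written proof (it declares the lemma ``evident'' from Definitions \ref{transformationT} and \ref{algol2}), and your verification --- identifying $T_{p,\epsilon}(\beta)=\tfrac1\beta-b^{(\epsilon)}(1/\beta)$ so that Lemma \ref{ineq1} applied to $1/\beta$ gives the half-open bounds, tracking the parity in Definition \ref{algol2}, and using the propagated irrationality of $\alpha_{(n)}$ both to keep the algorithm running and to exclude the endpoints $\tfrac12$, $\tfrac p2$ --- is exactly the intended filling-in of that claim.
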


\begin{lem}\label{lemord2}
Let $\alpha\in K$. Then, for even $n\in \mathbb{Z}_{> 0}$, 
we have $v_p((\alpha_{(n)})_{\langle p \rangle})\geq 0$. For odd $n\in \mathbb{Z}_{> 0}$, it holds that $v_p((\alpha_{(n)})_{\langle p \rangle})> 0$.
For all $n\in \mathbb{Z}_{>  0}$, $v_p((\alpha_{(n)})_{\langle p \rangle})=-v_p(a_n(\alpha))$.
\end{lem}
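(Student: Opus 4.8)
The plan is to prove all three assertions simultaneously by induction on $n$, in direct parallel with the proof of Lemma~\ref{lemord}. The only facts about $p$-adic truncation I will use are: $v_p(u-\lfloor u\rfloor_p)\ge 1$ for every $u\in\mathbb{Q}_p$; $v_p(\lfloor u\rfloor_p)=v_p(u)$ when $v_p(u)<0$; and $\lfloor u\rfloor_p\in\{0,1,\dots,p-1\}$ when $v_p(u)\ge 0$, this digit being coprime to $p$ precisely when $v_p(u)=0$. Together with the identity $v_p\bigl((1/\beta)_{\langle p\rangle}\bigr)=-v_p\bigl(\beta_{\langle p\rangle}\bigr)$, which holds because $\sigma_p$ is a field embedding, these suffice. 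Recall that $\alpha\in K/\mathbb{Q}$ has property~$I$, so $\alpha_{(1)}=\alpha$ is irrational with $v_p((\alpha_{(1)})_{\langle p\rangle})>0$; since each $\alpha_{(n+1)}$ equals $1/\alpha_{(n)}$ minus a rational number, a one-line sub-induction shows every $\alpha_{(n)}$ is irrational, hence nonzero, so $T_{p,0}$ and $T_{p,1}$ are applicable at each step.

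For $n=1$ the inequality $v_p((\alpha_{(1)})_{\langle p\rangle})>0$ is property~$I$; and with $u=(1/\alpha_{(1)})_{\langle p\rangle}$ one has $v_p(u)=-v_p((\alpha_{(1)})_{\langle p\rangle})<0$, so in $a_1(\alpha)=a^{(1)}(\alpha_{(1)})=m+\lfloor u\rfloor_p$ the term $\lfloor u\rfloor_p$ dominates the integer $m$, giving $v_p(a_1(\alpha))=v_p(u)=-v_p((\alpha_{(1)})_{\langle p\rangle})$. For the inductive step I apply $\sigma_p$ to the relation defining $\alpha_{(n+1)}$: when $n$ is odd, $(\alpha_{(n+1)})_{\langle p\rangle}=\bigl(u-\lfloor u\rfloor_p\bigr)-m$ with $u=(1/\alpha_{(n)})_{\langle p\rangle}$ and $m\in\mathbb{Z}$, so $v_p((\alpha_{(n+1)})_{\langle p\rangle})\ge 0$; when $n$ is even the subtracted integer is $mp$ rather than $m$, so $v_p((\alpha_{(n+1)})_{\langle p\rangle})\ge 1>0$. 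For the equality $v_p(a_{n+1}(\alpha))=-v_p((\alpha_{(n+1)})_{\langle p\rangle})$ I split on the sign of $v_p((\alpha_{(n+1)})_{\langle p\rangle})$: if it is positive, then $u'=(1/\alpha_{(n+1)})_{\langle p\rangle}$ has $v_p(u')<0$ and $v_p(\lfloor u'\rfloor_p)=v_p(u')$ dominates the (possibly $p$-scaled) integer offset in $a_{n+1}(\alpha)$, giving the claim; if it is zero --- which by the previous computation can occur only when $n+1$ is even --- then $u'$ is a $p$-adic unit, $\lfloor u'\rfloor_p\in\{1,\dots,p-1\}$ is coprime to $p$, and $a_{n+1}(\alpha)=m'p+\lfloor u'\rfloor_p$ is coprime to $p$, so $v_p(a_{n+1}(\alpha))=0=-v_p((\alpha_{(n+1)})_{\langle p\rangle})$.

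The only delicate point is this final bookkeeping, and the structural fact to keep in view is the asymmetry between the two kinds of step: in an odd step the offset $m$ is an arbitrary integer and may cancel the unit digit of $u$, so only $v_p\ge 0$ is guaranteed at the following (even) index, whereas in an even step the offset $mp$ is divisible by $p$ and cannot cancel a unit digit, forcing $v_p>0$ at the following (odd) index. This is exactly what makes the two valuation bounds in the statement different, and once it is noted everything else is routine valuation arithmetic --- the argument being the $T_{p,j}$-analogue of Lemma~\ref{lemord}.
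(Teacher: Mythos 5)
Your proof is correct, and it is exactly the argument the paper intends: the paper states Lemma~\ref{lemord2} as evident from Definitions~\ref{transformationT} and \ref{algol2}, the implicit justification being precisely the induction you carry out in parallel with the proof of Lemma~\ref{lemord} (offset $m$ versus $mp$ explaining the $\geq 0$ versus $>0$ dichotomy, and the ultrametric comparison of $\lfloor u\rfloor_p$ with the integer offset giving $v_p(a_n(\alpha))=-v_p((\alpha_{(n)})_{\langle p\rangle})$). Your handling of the borderline case $v_p((\alpha_{(n)})_{\langle p\rangle})=0$ at even indices, where the offset $m'p$ cannot cancel the unit digit, is the right bookkeeping and matches the paper's conventions.
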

 
The following lemma  can be proven in a similar manner to Lemma \ref{convergentsp}.

\begin{lem}\label{convergentsp2adic}
For all $n\in \mathbb{Z}_{> 0}$, $\displaystyle v_p(q_n)=\sum_{k=1}^nv_p(a_k(\alpha))$ and
$\displaystyle v_p(p_n)=\sum_{k=2}^nv_p(a_k(\alpha))$.
\end{lem}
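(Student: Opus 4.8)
The plan is to imitate the proof of Lemma~\ref{convergentsp}, using the recursion for the convergents coming from Definition~\ref{convergents} together with the valuation information recorded in Lemma~\ref{lemord2}. Writing out the matrix product in Definition~\ref{convergents}, the columns satisfy
\begin{align*}
p_n = p_{n-2} + a_n(\alpha)\,p_{n-1},\qquad
q_n = q_{n-2} + a_n(\alpha)\,q_{n-1},
\end{align*}
for $n\ge 1$, with $p_0=0$, $q_0=1$, and (from the first factor) $p_{-1}=1$, $q_{-1}=0$. From Lemma~\ref{lemord2} we have $v_p(a_n(\alpha))\le 0$ for all $n>0$, and moreover $v_p(a_n(\alpha))+v_p(a_{n+1}(\alpha))<0$, since for consecutive indices one of them is odd, forcing $v_p((\alpha_{(\cdot)})_{\langle p\rangle})>0$, hence $v_p(a_{\cdot}(\alpha))<0$ there, while the other contributes $v_p(a_{\cdot}(\alpha))\le 0$. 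This is the key inequality that makes the valuation of the larger-index term in each recursion strictly dominate the lagging term.

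First I would establish the claim for $q_n$ by induction on $n$, exactly as in Lemma~\ref{convergentsp}. The base cases $n=1$ ($q_1=a_1(\alpha)$, so $v_p(q_1)=v_p(a_1(\alpha))$) and $n=2$ ($q_2 = 1 + a_2(\alpha)a_1(\alpha)$, and $v_p(a_1(\alpha)a_2(\alpha))<0<v_p(1)$, so $v_p(q_2)=v_p(a_1(\alpha))+v_p(a_2(\alpha))$) are immediate. For the inductive step, assuming the formula for $n$ and $n-1$, one has $v_p(q_{n-1})=\sum_{k=1}^{n-1}v_p(a_k)$ and $v_p(a_n q_{n-1}) = v_p(a_n)+v_p(q_{n-1})$, which is strictly smaller than $v_p(q_{n-2})=\sum_{k=1}^{n-2}v_p(a_k)$ because the difference equals $v_p(a_{n-1})+v_p(a_n)<0$; hence $v_p(q_n)=v_p(q_{n-2}+a_n q_{n-1})=v_p(a_n q_{n-1})=\sum_{k=1}^{n}v_p(a_k(\alpha))$.

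For $p_n$ the argument is the same, but shifted: here $p_0=0$ and $p_1 = p_{-1}+a_1(\alpha)p_0 = 1$, so $v_p(p_1)=0=\sum_{k=2}^{1}v_p(a_k)$ (empty sum), and $p_2 = p_0 + a_2(\alpha)p_1 = a_2(\alpha)$, giving $v_p(p_2)=v_p(a_2(\alpha))$. The induction then proceeds identically: assuming the formula for $p_{n-1}$ and $p_{n-2}$ with $n\ge 2$, we get $v_p(a_n p_{n-1}) = v_p(a_n)+\sum_{k=2}^{n-1}v_p(a_k) = \sum_{k=2}^{n}v_p(a_k)$, which is strictly less than $v_p(p_{n-2})=\sum_{k=2}^{n-2}v_p(a_k)$ by the same amount $v_p(a_{n-1})+v_p(a_n)<0$, so the dominant term wins and $v_p(p_n)=\sum_{k=2}^{n}v_p(a_k(\alpha))$.

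I do not expect a genuine obstacle here; the only point requiring a little care is bookkeeping the initial conditions, since Definition~\ref{convergents} is set up with a leading $\begin{pmatrix}0&1\\1&a_1\end{pmatrix}$ rather than the $\begin{pmatrix}b_0&1\\1&0\end{pmatrix}$ of Definition~\ref{convergents1}, which shifts where the $p$-index "starts counting" and is precisely why the $p_n$ formula sums from $k=2$ while the $q_n$ formula sums from $k=1$. One should also check that $v_p(q_{n-2})$ and $v_p(p_{n-2})$ are genuinely finite (i.e. that $q_{n-2},p_{n-2}\neq 0$), which follows inductively from the formulas themselves together with the observation (already made in the text after Lemma~\ref{convergentsp}) that these quantities are nonzero.
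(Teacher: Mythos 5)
Your proof is correct and is essentially the argument the paper intends: the paper simply remarks that this lemma "can be proven in a similar manner to Lemma~\ref{convergentsp}", i.e.\ by induction on the recursion $q_n=q_{n-2}+a_n(\alpha)q_{n-1}$, $p_n=p_{n-2}+a_n(\alpha)p_{n-1}$ using $v_p(a_n(\alpha))+v_p(a_{n+1}(\alpha))<0$ from Lemma~\ref{lemord2}, exactly as you do, with the only extra bookkeeping being the shifted initial conditions that make the $p_n$ sum start at $k=2$.
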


By Lemma \ref{convergentsp2adic}, it is established that for all $n\in \mathbb{Z}_{> 0}$, the convergents $\frac{p_n}{q_n}$ are well defined and are not equal to $0$.
Therefore, as it is well-known, for all $n\in \mathbb{Z}_{> 0}$, we have
\begin{align*}
[0;a_1(\alpha),\ldots, a_n(\alpha)]=\dfrac{p_n}{q_n}.
\end{align*}

We extend ${T}_{p,\epsilon}$ for $\epsilon\in \{0,1\}$ as a skew product on $K^2$ as follows:
\begin{defn}\label{product}
For $\epsilon\in \{0,1\}$, we define maps $\hat{T}_{p,\epsilon}:(K^{\times})^2\to K^2$  as
for $(\alpha,\beta)\in (K^{\times})^2$
\begin{align*}
\hat{T}_{p,\epsilon}(\alpha,\beta):=\left(T_{p,\epsilon}(\alpha),\dfrac{1}{\beta}-a^{(\epsilon)}(\alpha)\right).
\end{align*}
\end{defn}

Using $\hat{T}_{2,\epsilon}$ for $\epsilon\in \{0,1\}$, for $\mathbf{a}=(\alpha,\beta)\in K^2$, 
we define the following algorithm. 

\begin{defn}\label{algol3}
We define an algorithm  as follows:
Let $\mathbf{a}=(\alpha,\beta)=\mathbf{a}_{(1)} \in K^2$. 
We define $\{\mathbf{a}_{(n)}\}_{n\in \mathbb{Z}_{\geq 1}}$ recursively as:\\
If $\mathbf{a}_{(n)}\in (K^{\times})^2$,
\begin{align*}
&\mathbf{a}_{(n+1)}:=\begin{cases}
\hat{T}_{p,0}(\mathbf{a}_{(n)})& \text{if $n\equiv 0 \mod\ 2$},\\
\hat{T}_{p,1}(\mathbf{a}_{(n)})& \text{if $n\equiv 1 \mod\ 2$}.
\end{cases}
\end{align*}
If $\mathbf{a}_{(n)}\notin (K^{\times})^2$, terminate.
\end{defn}
We note that the first coordinate of  $\mathbf{a}_{(n)}$ is $\alpha_{(n)}$. 

We define $\tau:\mathbb{Z}\to \{0,1\}$ as follows: for $n\in \mathbb{Z}$, 
\begin{align*}
\tau(n):=\begin{cases} 0 & \text{if $n$ is even},\\
                      1 & \text{if $n$ is odd}.
\end{cases}                      
\end{align*}

\begin{defn}\label{domain2}
We define
\begin{align*}
&D_{p}^0:=\{(x,y)\in \mathbb{Q}_p^2\mid v_p(x)\geq 0, v_p(y)<0\},\\
&D_{p}^1:=\{(x,y)\in \mathbb{Q}_p^2\mid v_p(x)>0, v_p(y)\leq 0\}.
\end{align*}
\end{defn}

\begin{lem}\label{Letalpha=}
Let $(\alpha,\beta)\in (K/\mathbb{Q})\times K^{\times}$.
Then, 
\begin{enumerate}
\item[(1)] 
If $(\alpha_{\langle p \rangle},\beta_{\langle p \rangle})\in D_{p}^0$, then $(\hat{T}_{p,0}(\alpha,\beta))_{\langle p \rangle}\in D_{p}^1$. 
\item[(2)] 
If $(\alpha_{\langle p \rangle},\beta_{\langle p \rangle})\in D_{p}^1$, then $(\hat{T}_{p,1}(\alpha,\beta))_{\langle p \rangle}\in D_{p}^0$. 
\end{enumerate}
\end{lem}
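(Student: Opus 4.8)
The plan is to unwind the definition of $\hat{T}_{p,\epsilon}$ and track the $p$-adic valuations of the two coordinates through one step, using the already-established Lemma \ref{lemord2} for the first coordinate and a direct computation for the second. Recall that by Definition \ref{product}, $\hat{T}_{p,\epsilon}(\alpha,\beta) = \left(T_{p,\epsilon}(\alpha),\ \tfrac{1}{\beta}-a^{(\epsilon)}(\alpha)\right)$, and that $a^{(\epsilon)}(\alpha)$ depends only on $\alpha$ (specifically on $1/\alpha$). So the first coordinate of the image is governed entirely by the $T_{p,\epsilon}$ dynamics, for which Lemmas \ref{lemord2} and \ref{twoalgol}-type facts already give the valuation constraints; the only genuinely new content is the estimate on $v_p$ of the new second coordinate $\tfrac{1}{\beta}-a^{(\epsilon)}(\alpha)$.

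For part (1), I would assume $(\alpha_{\langle p\rangle},\beta_{\langle p\rangle})\in D_p^0$, i.e. $v_p(\alpha_{\langle p\rangle})\ge 0$ and $v_p(\beta_{\langle p\rangle})<0$. The first coordinate: $v_p(\alpha_{\langle p\rangle})\ge 0$ means (as in the proof of Lemma \ref{twoalgol}) that $\lfloor(1/\alpha)_{\langle p\rangle}\rfloor_p$ removes the nonnegative-power part of $1/\alpha$, so $v_p\big((T_{p,0}(\alpha))_{\langle p\rangle}\big) > 0$; this is exactly the content of Lemma \ref{lemord2} applied at an even index, giving membership in the ``$v_p(x)>0$'' slot of $D_p^1$. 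The second coordinate: since $v_p(\beta_{\langle p\rangle})<0$ we have $v_p(1/\beta_{\langle p\rangle})>0$, while $a^{(0)}(\alpha) = mp + \lfloor(1/\alpha)_{\langle p\rangle}\rfloor_p$, and by construction $\lfloor\cdot\rfloor_p$ is a $p$-adic integer, indeed $v_p(a^{(0)}(\alpha))\ge 0$ (it has no negative powers of $p$; the key point is that when $v_p(\alpha_{\langle p\rangle})\ge 0$ one even gets $\lfloor(1/\alpha)_{\langle p\rangle}\rfloor_p$ being an ordinary integer times a unit, but in any case $v_p\ge 0$). Hence $v_p\big(1/\beta_{\langle p\rangle}-a^{(0)}(\alpha)_{\langle p\rangle}\big)\ge\min(v_p(1/\beta),\,v_p(a^{(0)}))$ could be $0$ but not negative — wait, I must be careful: I need $v_p(y)\le 0$ for $D_p^1$, which is the \emph{weaker} of the two constraints, so $v_p\ge 0$ plus the possibility of cancellation is fine only if I can rule out $v_p>0$; actually $D_p^1$ asks $v_p(y)\le 0$, so I need to show the second coordinate has \emph{nonpositive} valuation. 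The resolution: $v_p(1/\beta_{\langle p\rangle})>0$ strictly, and $a^{(0)}(\alpha)$ in the relevant case has $v_p=0$ exactly when the integer part is a unit, but it can also vanish; the cleaner route is to observe that this is precisely the statement of Lemma \ref{lemord2} / the structure of Algorithm \ref{algol3}, where the second coordinate is designed to mirror the $q_n$ recursion — so I would instead cite Lemma \ref{convergentsp2adic} and the parity bookkeeping rather than re-deriving it. For part (2), the argument is symmetric with the roles of strict/non-strict inequalities swapped, using $T_{p,1}$ and $a^{(1)}$ at an odd index.

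The cleanest structure is therefore: (i) separate the two coordinates; (ii) for the first coordinate, quote Lemma \ref{lemord2} directly (even index $\to$ $v_p\ge 0$ becomes $v_p>0$ after one more step through the odd map, and vice versa) — this handles the $x$-component of $D_p^0 \leftrightarrow D_p^1$ verbatim; (iii) for the second coordinate, write $y' = 1/\beta - a^{(\epsilon)}(\alpha)$ and use that $v_p(a^{(\epsilon)}(\alpha))\ge 0$ together with the hypothesis on $v_p(\beta_{\langle p\rangle})$ and the ultrametric inequality, being attentive to whether cancellation can occur. The main obstacle is exactly this attentiveness in step (iii): I must verify that in case (1) the new $y$-coordinate has $v_p \le 0$ (not accidentally positive), which forces me to examine whether $v_p(1/\beta_{\langle p\rangle})$ and $v_p(a^{(0)}(\alpha))$ can coincide and cancel — but since $v_p(1/\beta_{\langle p\rangle}) = -v_p(\beta_{\langle p\rangle}) > 0$ strictly while the dangerous cancellation would require $a^{(0)}$ to also contribute at that exact positive valuation, and $a^{(0)}(\alpha) = mp + \lfloor(1/\alpha)_{\langle p\rangle}\rfloor_p$ is by Lemma \ref{twoalgol}-type reasoning actually an integer in this case (so $v_p(a^{(0)})\ge 0$ with no fine structure forcing a match), the ultrametric inequality gives $v_p(y') = \min(0, \text{positive}) = 0 \le 0$ when $a^{(0)}$ is a $p$-adic unit and $v_p(y') \ge 0$ otherwise — so the honest fix is to check that $a^{(\epsilon)}(\alpha)\neq 0$ (which holds since $\mathbf{a}_{(n)}\in(K^\times)^2$ is assumed throughout Algorithm \ref{algol3}) and then the symmetric case (2) where the strict inequality $v_p(x)>0$ in $D_p^1$ feeds the strict half, closing the loop. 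I would spell out the cancellation check once, carefully, and then assert the symmetric case follows by the same computation with $0$ and $1$ interchanged.
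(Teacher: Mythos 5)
Your treatment of the first coordinate is fine and matches the paper: the definition of $T_{p,\epsilon}$ directly gives $v_p\bigl((\tfrac{1}{\alpha}-a^{(0)}(\alpha))_{\langle p\rangle}\bigr)>0$ in case (1) and $\geq 0$ in case (2). The gap is in the second coordinate, and it is genuine. You assert that $v_p(a^{(0)}(\alpha))\geq 0$ because ``$\lfloor\cdot\rfloor_p$ is a $p$-adic integer'' --- but it is not: by the paper's definition $\lfloor u\rfloor_p=\sum_{n\leq 0}c_np^n$ contains the entire negative-power tail of $u$, so $a^{(\epsilon)}(\alpha)\in\mathbb{Z}[\tfrac1p]$ typically has \emph{negative} valuation. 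The fact you actually need, and never establish, is the equality $v_p(a^{(\epsilon)}(\alpha))=v_p\bigl((\tfrac1\alpha)_{\langle p\rangle}\bigr)$, which follows from $v_p\bigl(\tfrac1\alpha-a^{(\epsilon)}(\alpha)\bigr)>0$ (resp.\ $\geq 0$) together with $v_p\bigl((\tfrac1\alpha)_{\langle p\rangle}\bigr)\leq 0$ (resp.\ $<0$) and the ultrametric inequality. With this identity, case (1) gives $v_p(a^{(0)}(\alpha))\leq 0$ while $v_p\bigl((\tfrac1\beta)_{\langle p\rangle}\bigr)>0$, so no cancellation can occur and $v_p\bigl((\tfrac1\beta-a^{(0)}(\alpha))_{\langle p\rangle}\bigr)\leq 0$; case (2) gives $v_p(a^{(1)}(\alpha))<0$ while $v_p\bigl((\tfrac1\beta)_{\langle p\rangle}\bigr)\geq 0$, forcing $v_p\bigl((\tfrac1\beta-a^{(1)}(\alpha))_{\langle p\rangle}\bigr)<0$. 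This is exactly the paper's argument.

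Your attempted repairs do not close the hole. Checking $a^{(\epsilon)}(\alpha)\neq 0$ is irrelevant: a nonzero $a^{(0)}(\alpha)$ of positive valuation (which your stated bound $v_p\geq 0$ does not exclude) would make the new second coordinate have strictly positive valuation and hence fail $D_p^1$. Citing Lemma \ref{convergentsp2adic} and ``the $q_n$ recursion'' is also not available here: that lemma concerns the convergents of $\alpha$ itself, whereas in this lemma $\beta\in K^{\times}$ is arbitrary (later it is taken to be $\bar\alpha$, $p_n/q_n$, or $\infty$), and the present lemma is precisely the tool meant to handle that generality. Finally, in case (2) the claim ``symmetric with $v_p(a^{(1)})\geq 0$'' cannot work even in principle: there you must produce a coordinate of \emph{strictly negative} valuation from $\tfrac1\beta$ with $v_p\geq 0$, which is only possible because $v_p(a^{(1)}(\alpha))<0$ --- the opposite sign from the one you assert. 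So the missing ingredient is the valuation identity for $a^{(\epsilon)}(\alpha)$; once you prove it, both cases follow in two lines.
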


\begin{proof}
(1) We assume that $(\alpha_{\langle p \rangle},\beta_{\langle p \rangle})\in D_{p}^0$.
Then, $\hat{T}_{p,0}(\alpha,\beta)=(\frac{1}{\alpha}-a^{(0)}(\alpha),\frac{1}{\beta}-a^{(0)}(\alpha))$.
By Definition \ref{transformationT},
we see that $v_p((\frac{1}{\alpha}-a^{(0)}(\alpha))_{\langle p \rangle})>0$.
Since $v_p(a^{(0)}(\alpha))=v_p((\frac{1}{\alpha})_{\langle p \rangle})\leq 0$ and $v_p((\frac{1}{\beta})_{\langle p \rangle})>0$,
we have $v_p((\frac{1}{\beta}-a^{(0)}(\alpha))_{\langle p \rangle})\leq 0$.
Thus, we have claim (1).\\
(2) We assume that $(\alpha_{\langle p \rangle},\beta_{\langle p \rangle})\in D_{p}^1$.
Then, $\hat{T}_{p,1}(\alpha,\beta)=(\frac{1}{\alpha}-a^{(1)}(\alpha),\frac{1}{\beta}-a^{(1)}(\alpha))$.
By Definition \ref{transformationT},
we see that $v_p((\frac{1}{\alpha}-a^{(1)}(\alpha))_{\langle p \rangle})\geq 0$.
Since $v_p((\frac{1}{\alpha})_{\langle p \rangle})<0$, we have $v_p(a^{(1)}(\alpha))<0$.
Therefore, since $v_p((\frac{1}{\beta})_{\langle p \rangle})\geq 0$, we conclude that  $v_p((\frac{1}{\beta}-a^{(1)}(\alpha))_{\langle p \rangle})=v_p((a^{(1)}(\alpha))_{\langle p \rangle})<0$.
Thus, we have claim (2).
\end{proof}

\begin{thm}\label{conver}
Let $\alpha\in K$ have property $I$.
Then, in $\mathbb{Q}_p$
\begin{align*}
\lim_{n\to \infty} \dfrac{p_n}{q_n}=\alpha_{\langle p \rangle}.
\end{align*}
\end{thm}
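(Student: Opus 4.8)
The plan is to exploit the standard continued-fraction matrix identity together with the $p$-adic valuation estimates already established. First I would write, for each $n$, the exact error formula coming from Definition \ref{convergents}: since $\alpha_{\langle p\rangle}=[0;a_1(\alpha),\dots,a_n(\alpha),\alpha_{(n+1)\,\langle p\rangle}]$ (with the usual convention, because $\alpha_{(n+1)}$ is exactly the ``tail'' of the expansion — this follows by induction on the algorithm in Definition \ref{algol2}), the matrix recursion gives
\begin{align*}
\alpha_{\langle p\rangle}-\frac{p_n}{q_n}=\frac{(-1)^{\,?}}{q_n\bigl(q_n\,(\alpha_{(n+1)})_{\langle p\rangle}+q_{n-1}\bigr)},
\end{align*}
or more precisely $\alpha_{\langle p\rangle}-p_n/q_n = \dfrac{p_n q_{n-1}-p_{n-1}q_n}{q_n(q_n(\alpha_{(n+1)})_{\langle p\rangle}+q_{n-1})}$, where the numerator is a unit power of $(-1)$ times a power of $p$ that I can read off from $\det$ of the defining product (each factor has determinant $-1$). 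The point is to bound the $p$-adic absolute value of the right-hand side.

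Next I would estimate $v_p$ of each ingredient. By Lemma \ref{convergentsp2adic}, $v_p(q_n)=\sum_{k=1}^n v_p(a_k(\alpha))$ and $v_p(q_{n-1})=\sum_{k=1}^{n-1} v_p(a_k(\alpha))$, so $v_p(q_n)-v_p(q_{n-1})=v_p(a_n(\alpha))$, which is $\le 0$ by Lemma \ref{lemord2} and in fact $<0$ whenever $n$ is such that the relevant coordinate has negative valuation. Meanwhile, by the skew-product Lemma \ref{Letalpha=} applied with $\beta=\alpha$ (or by tracking directly), $v_p((\alpha_{(n+1)})_{\langle p\rangle})$ alternates sign with $n$ and combines with $v_p(q_n)$ so that $v_p(q_n(\alpha_{(n+1)})_{\langle p\rangle})$ and $v_p(q_{n-1})$ can be compared; the key claim is that $v_p\bigl(q_n(\alpha_{(n+1)})_{\langle p\rangle}+q_{n-1}\bigr)$ equals the minimum of the two, so no cancellation occurs. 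Granting that, $v_p$ of the error is $v_p(\text{numerator}) - v_p(q_n) - \min\{v_p(q_n)+v_p((\alpha_{(n+1)})_{\langle p\rangle}),\,v_p(q_{n-1})\}$, and I would show this tends to $+\infty$ because $\sum v_p(a_k(\alpha))\to-\infty$ — indeed $v_p(a_k a_{k+1})<0$ forces the partial sums to diverge to $-\infty$, exactly as in Lemma \ref{MRS}.

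Concretely, the cleanest route may be to avoid the tail formula and instead imitate the proof of Lemma \ref{MRS} (from \cite{MRS}) directly: show $v_p(p_n/q_n - p_{n-1}/q_{n-1})\to\infty$, so $\{p_n/q_n\}$ is $p$-adically Cauchy, hence converges to some limit $L\in\mathbb{Q}_p$; then identify $L$ with $\alpha_{\langle p\rangle}$ using the tail identity above, since $v_p\bigl((\alpha_{(n+1)})_{\langle p\rangle}\bigr)$ stays bounded below (it is $\ge 0$ for even $n$ by Lemma \ref{lemord2}, and for odd $n$ one uses that $\alpha_{(n+1)}$ itself has small archimedean size by Lemma \ref{alphainK}, but $p$-adically it is the reciprocal of something with positive valuation, hence has valuation $<0$ but controlled). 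The main obstacle I anticipate is precisely the no-cancellation step: verifying that $v_p\bigl(q_n(\alpha_{(n+1)})_{\langle p\rangle}+q_{n-1}\bigr)$ is the honest minimum of the two valuations, i.e. that the two terms have genuinely different $p$-adic sizes (or, if equal, that their sum does not drop in valuation), which requires carefully tracking the parity-dependent inequalities of Lemmas \ref{lemord2} and \ref{Letalpha=} through the recursion $q_n = a_n(\alpha) q_{n-1}+q_{n-2}$. Once that is pinned down, the divergence $\sum_{k=1}^n v_p(a_k(\alpha))\to-\infty$ does the rest and both the Cauchy property and the identification of the limit follow routinely.
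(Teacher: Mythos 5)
Your route is viable and genuinely different from the paper's proof of Theorem \ref{conver}: the paper never touches the tail formula here, but instead runs the skew product (Definition \ref{algol3}) on the pair $(\alpha,\frac{p_n}{q_n})$, notes that the second coordinate evolves to $[0;a_j(\alpha),\ldots,a_n(\alpha)]$ and finally to $0$, and telescopes $|u_k-v_k|_p=|u_kv_k|_p\,|u_{k+1}-v_{k+1}|_p$ together with the parity bounds $|u_k|_p,|v_k|_p\le p^{-1}$ (odd $k$), $\le 1$ (even $k$), getting the clean rate $|\alpha_{\langle p\rangle}-\frac{p_n}{q_n}|_p\le p^{-n}$ with no cancellation issue to discuss. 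Your tail-formula computation is essentially the argument the paper itself uses later (Lemma \ref{boundf} and Lemma \ref{boundf2}, stated there for $p=2$ but valid verbatim for general $p$), so it does work, with two repairs. First, with the paper's conventions the tail identity is $\alpha=\frac{p_{n-1}\alpha_{(n+1)}+p_n}{q_{n-1}\alpha_{(n+1)}+q_n}$, hence
\begin{align*}
\alpha-\frac{p_n}{q_n}=\frac{(-1)^n\,\alpha_{(n+1)}}{q_n\left(q_{n-1}\alpha_{(n+1)}+q_n\right)},
\end{align*}
i.e.\ the factor $\alpha_{(n+1)}$ belongs in the numerator and the denominator is $q_{n-1}\alpha_{(n+1)}+q_n$, not $q_n\alpha_{(n+1)}+q_{n-1}$; with your misplaced denominator the ``minimum of the two valuations'' claim is genuinely unclear, since $v_p(q_n\alpha_{(n+1)})-v_p(q_{n-1})=v_p(a_n(\alpha))-v_p(a_{n+1}(\alpha))$ has no fixed sign. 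Second, the no-cancellation step you flag as the main obstacle is immediate once the formula is right and needs only Lemmas \ref{lemord2} and \ref{convergentsp2adic} (not Lemma \ref{Letalpha=}): $v_p(q_{n-1}\alpha_{(n+1)})-v_p(q_n)=-\left(v_p(a_n(\alpha))+v_p(a_{n+1}(\alpha))\right)>0$, so $v_p(q_{n-1}\alpha_{(n+1)}+q_n)=v_p(q_n)$ and $v_p\left(\alpha_{\langle p\rangle}-\frac{p_n}{q_n}\right)=-2v_p(q_n)-v_p(a_{n+1}(\alpha))\ge-2v_p(q_n)\to\infty$, because $v_p(q_n)=\sum_{k\le n}v_p(a_k(\alpha))$ drops by at least $1$ every two steps. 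You should also dispose of rational $\alpha$ separately (via Corollary \ref{rational2}), since for a terminating expansion $\alpha_{(n+1)}$ is eventually undefined and the tail identity is vacuous. In exchange for this extra bookkeeping, your route yields the exact error valuation (the content of Lemma \ref{boundf2}), whereas the paper's pairing trick gives the convergence and an explicit $p^{-n}$ rate with less computation.
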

\begin{proof}
If $\alpha\in \mathbb{Q}$, then from Corollary \ref{rational2},
the claim of the theorem holds.
We assume that $\alpha\in K/\mathbb{Q}$.
Let $n\in \mathbb{Z}_{> 0}$.
Let $\mathbf{a}=(\alpha,\frac{p_n}{q_n})$.
We apply Algorithm (Definition \ref{algol3}) to $\mathbf{a}$.
By induction, we obtain the following:\\
For $1\leq j \leq n$,
\begin{align*}
\mathbf{a}_{(j)}=(\alpha_{(j)},[0;a_j(\alpha),a_{j+1}(\alpha),\ldots,a_{n}(\alpha)]).
\end{align*}
and
\begin{align*}
\mathbf{a}_{(n+1)}=(\alpha_{(n+1)},0).
\end{align*}
We put $(u_k.v_k)=(\mathbf{a}_{(k)})_{\langle p \rangle}$ for $1\leq k \leq n+1$.
For $k\in \mathbb{Z}_{> 0}$, we see that $\displaystyle u_{k+1}=\frac{1}{u_{k}}-a_k(\alpha)$ and $\displaystyle v_{k+1}=\frac{1}{v_{k}}-a_k(\alpha)$
, which implies
\begin{align*}
|u_k-v_k|_p=|u_kv_k|_p|u_{k+1}-v_{k+1}|_p.
\end{align*}
Therefore, for $n\in \mathbb{Z}_{> 0}$, we have
\begin{align}\label{u0v0u0v0}
|u_1-v_1|_p=|u_1|_p|v_1|_p\ldots |u_n|_p|v_n|_p|u_{n+1}-v_{n+1}|_p=|u_1|_p|v_1|_p\ldots |u_n|_p|v_n|_p |u_{n+1}|_p.
\end{align}
From Lemma \ref{lemord2} and \ref{convergentsp2adic}, we observe that $|u_k|_p \leq \frac{1}{p}$ and $|v_k|_p \leq \frac{1}{p}$ hold  for odd  $k$, and  $|u_k|_p \leq 1$ and $|v_k|_p \leq 1$ hold for even  $k$ for $1\leq k \leq n+1$.
Therefore, from the equality (\ref{u0v0u0v0}), we have
\begin{align*}
\left|\alpha_{\langle p \rangle}-\dfrac{p_n}{q_n}\right|_p\leq \dfrac{1}{p^n},
\end{align*}
 which implies $\lim_{n\to \infty} \dfrac{p_n}{q_n}=\alpha_{\langle p \rangle}$ in $\mathbb{Q}_p$.
\end{proof}

\begin{cor}\label{inftyp_nq_ncor}
Let $\alpha\in K$. For $n\in \mathbb{Z}_{\geq 0}$, let $\frac{p_n}{q_n}$ be the convergent
of $\alpha$ related to  Algorithm (Definition \ref{algol1}).
Then,  in $\mathbb{Q}_p$
\begin{align*}
\lim_{n\to \infty}\dfrac{p_n}{q_n}=\alpha_{\langle p \rangle}.
\end{align*}
\end{cor}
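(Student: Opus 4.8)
\noindent\emph{Proof plan.}
The plan is to reduce the claim to Theorem~\ref{conver} by splitting off the zeroth partial quotient. Put $\beta:=\alpha-b_0(\alpha)=\alpha-b^{(0)}(\alpha)$. First I would verify that $\beta$ has property $I$: by Lemma~\ref{ineq1}, $\beta_{\infty}=\alpha_{\infty}-b^{(0)}(\alpha)\in\left(-\tfrac{p}{2},\tfrac{p}{2}\right]$, and since $b^{(0)}(\alpha)=mp+\lfloor\alpha_{\langle p\rangle}\rfloor_p$ for some $m\in\mathbb{Z}$, we have $\beta_{\langle p\rangle}=\bigl(\alpha_{\langle p\rangle}-\lfloor\alpha_{\langle p\rangle}\rfloor_p\bigr)-mp$, so $v_p(\beta_{\langle p\rangle})>0$ (this is the remark recorded just before Definition~\ref{property I}). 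Hence Theorem~\ref{conver} is applicable to $\beta$.

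Second, I would establish a dictionary between Algorithm (Definition~\ref{algol1}) applied to $\alpha$ and Algorithm (Definition~\ref{algol2}) applied to $\beta$. Comparing Definitions~\ref{transformation} and \ref{transformationT} gives, for $\gamma\in K^{\times}$ and $\epsilon\in\{0,1\}$, the identities $a^{(\epsilon)}(1/\gamma)=b^{(\epsilon)}(\gamma)$ and $T_{p,\epsilon}(1/\gamma)=\gamma-b^{(\epsilon)}(\gamma)$. Since $\alpha_1=1/(\alpha_0-b_0(\alpha))=1/\beta$, i.e.\ $\beta_{(1)}=1/\alpha_1$, an induction on $n$ --- in which the parity conventions line up because the transition $\beta_{(n)}\mapsto\beta_{(n+1)}$ and the transition $\alpha_n\mapsto\alpha_{n+1}$ are both governed by the parity of the index $n$ --- yields $\beta_{(n)}=1/\alpha_n$ and $a_n(\beta)=b_n(\alpha)$ for every $n\ge 1$, with the two algorithms terminating consistently (if at all).

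Third, from the dictionary the convergents of Algorithm (Definition~\ref{algol2}) on $\beta$ are $P_n/Q_n=[0;a_1(\beta),\dots,a_n(\beta)]=[0;b_1(\alpha),\dots,b_n(\alpha)]$, whence the convergents of Algorithm (Definition~\ref{algol1}) on $\alpha$ satisfy $p_n/q_n=[b_0(\alpha);b_1(\alpha),\dots,b_n(\alpha)]=b_0(\alpha)+P_n/Q_n$. Theorem~\ref{conver} gives $P_n/Q_n\to\beta_{\langle p\rangle}$ in $\mathbb{Q}_p$, and since $b_0(\alpha)\in\mathbb{Z}[\tfrac1p]$ and $\sigma_p$ is a ring homomorphism, $p_n/q_n\to b_0(\alpha)+\beta_{\langle p\rangle}=(b_0(\alpha)+\beta)_{\langle p\rangle}=\alpha_{\langle p\rangle}$. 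When $\alpha\in\mathbb{Q}$ the sequences are finite and the same computation applies to the last convergent, the $\mathbb{Q}$-case of Theorem~\ref{conver} being covered by Corollary~\ref{rational2}.

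The only step requiring genuine care is the induction in the second paragraph: one must check the transformation identities above, match the parities of the two algorithms, and keep track of the index shift --- the first term $\beta_{(1)}$ of Algorithm (Definition~\ref{algol2}) on $\beta$ corresponds to $\alpha_1$, not to $\alpha_0$. Everything else is bookkeeping, and the $p$-adic convergence input is supplied entirely by Theorem~\ref{conver}.
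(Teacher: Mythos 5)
Your proposal is correct and matches the paper's own argument: the paper likewise sets $\beta=\alpha-b_0(\alpha)$, notes it has property $I$, applies Theorem~\ref{conver} to $\beta$, and uses $\frac{p_n}{q_n}=b_0(\alpha)+\frac{p'_n}{q'_n}$. You merely spell out the details (the algorithm dictionary $\alpha_n=1/\beta_{(n)}$, $b_n(\alpha)=a_n(\beta)$, and the property-$I$ check) that the paper leaves implicit.
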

\begin{proof}
Since $\beta=\alpha-b_0(\alpha)$ has  property $I$, we can deduce that in $\mathbb{Q}_p$
\begin{align*}
\displaystyle\lim_{n\to \infty} \dfrac{p'_n}{q'_n}=\beta_{\langle p \rangle},
\end{align*}
where $\frac{p'_n}{q'_n}$ represents the $n$-th convergent of  $\beta$ obtained using Algorithm (Definition \ref{algol2}).
Considering that the $\frac{p_n}{q_n}=b_0(\alpha)+\frac{p'_n}{q'_n}$ for $n\in \mathbb{Z}_{> 0}$, 
we arrive at the theorem's conclusion.
\end{proof}

\begin{example}
Let $\alpha$ be the root of $x^2-\dfrac{17}{5}x+\dfrac{63}{25}$ with $\alpha_{\infty}=\dfrac{17+\sqrt{37}}{10}$ and
$\alpha_{\langle 7 \rangle}\equiv 2 \mod\ 7$.
We give  the continued fraction expansion of $\alpha$ for $p=7$ obtained using Algorithm (Definition \ref{algol1}) as follows:
\begin{align*}
\left[2; \dfrac{22}{7}, \overline{12, -\dfrac{2}{7}, -5, -\dfrac{29}{7}, -12, \dfrac{2}{7}, 5, \dfrac{29}{7}}\right].
\end{align*}
According to Corollary \ref{inftyp_nq_ncor}, this expansion converges to $\alpha_{\langle 7 \rangle}$.
By utilizing the expansion's periodicity, we can compute the convergents of $\alpha_{\infty}$. This enables us to demonstrate the convergence of this expansion towards $\alpha_{\infty}$.
\end{example}

\section{Case of $p=2$}
In this section, we consider the case of $p=2$. 
Let $\alpha\in K/\mathbb{Q}$ have property $I$.
In this chapter, we apply Algorithm (Definition \ref{algol2}) to $\alpha$.

\begin{defn}\label{domain}
We define
\begin{align*}
&I_{\infty}^0:=\{x\in \mathbb{R}\mid |x|\leq \frac{1}{2}\},\\
&I_{\infty}^1:=\{x\in \mathbb{R}\mid |x|\leq 1\},\\
&D_{\infty}^0:=\{(x,y)\in \mathbb{R}^2\mid |x|\leq \frac{1}{2},|x-y|> \frac{1}{2}\},\\
&D_{\infty}^1:=\{(x,y)\in \mathbb{R}^2\mid |x|\leq 1,|x-y|> 1\}.
\end{align*}
\end{defn}

\begin{lem}\label{Dinfty}
The following assertions hold:\\
Let $(u,v)\in (K^{\times})^2$. 
\begin{enumerate}
\item[(1)]
If $(u_{\infty},v_{\infty})\in D_{\infty}^0$, then $(\hat{T}_{2,0}(u,v))_{\infty}\in D_{\infty}^1.$
\item[(2)]
If $(u_{\infty},v_{\infty})\in D_{\infty}^1$, then $(\hat{T}_{2,1}(u,v))_{\infty}\in D_{\infty}^0.$
\end{enumerate}
\end{lem}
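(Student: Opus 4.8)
The plan is to note first that, for either $\epsilon\in\{0,1\}$, $\hat T_{2,\epsilon}(u,v)=\bigl(\tfrac1u-a^{(\epsilon)}(u),\ \tfrac1v-a^{(\epsilon)}(u)\bigr)$, so the two coordinates of $\hat T_{2,\epsilon}(u,v)$ differ by $\tfrac1u-\tfrac1v$ — the shift $a^{(\epsilon)}(u)$ cancels, and in particular it does not matter that it depends only on $u$. Put $x:=u_\infty$ and $y:=v_\infty$; since $(u,v)\in(K^\times)^2$ and $\sigma_\infty$ is a field embedding, $x,y\ne0$. Applying $\sigma_\infty$, the first coordinate of $(\hat T_{2,\epsilon}(u,v))_\infty$ is $(T_{2,\epsilon}(u))_\infty$, and the difference of its two coordinates is $\tfrac1x-\tfrac1y=\tfrac{y-x}{xy}$, which has absolute value $\dfrac{|x-y|}{|x|\,|y|}$.

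For the first-coordinate bounds, recall that by Definition~\ref{transformationT} the number $a^{(0)}(u)$ (resp.\ $a^{(1)}(u)$) is a point of the coset $2\mathbb{Z}+\lfloor(1/u)_{\langle2\rangle}\rfloor_2$ (resp.\ $\mathbb{Z}+\lfloor(1/u)_{\langle2\rangle}\rfloor_2$) whose $\sigma_\infty$-image is at minimal distance from $(1/u)_\infty$; hence, exactly as in Lemma~\ref{ineq1} with $p=2$, $|(T_{2,0}(u))_\infty|\le1$ and $|(T_{2,1}(u))_\infty|\le\tfrac12$. These are precisely the conditions ``$|x|\le1$'' and ``$|x|\le\tfrac12$'' appearing in the definitions of $D_\infty^1$ and $D_\infty^0$.

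It remains to check the separation conditions using the triangle inequality $|y|\le|x-y|+|x|$. In case (1), $(x,y)\in D_\infty^0$ gives $|x|\le\tfrac12$ and $|x-y|>\tfrac12$, so
\[
|x|\,|y|\ \le\ |x|\,|x-y|+|x|^2\ \le\ \tfrac12\,|x-y|+\tfrac14\ <\ |x-y|,
\]
the final inequality being equivalent to $|x-y|>\tfrac12$. Therefore $\dfrac{|x-y|}{|x|\,|y|}>1$, which together with the first-coordinate bound shows $(\hat T_{2,0}(u,v))_\infty\in D_\infty^1$. In case (2), $(x,y)\in D_\infty^1$ gives $|x|\le1$ and $|x-y|>1$, and the same computation yields
\[
\tfrac12\,|x|\,|y|\ \le\ \tfrac12\,|x|\,|x-y|+\tfrac12\,|x|^2\ \le\ \tfrac12\,|x-y|+\tfrac12\ <\ |x-y|,
\]
so $\dfrac{|x-y|}{|x|\,|y|}>\tfrac12$ and $(\hat T_{2,1}(u,v))_\infty\in D_\infty^0$.

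There is no genuine obstacle in this argument; the only points to be attentive to are that $x,y\ne0$ so the reciprocals and the quotient are legitimate, and that the strict hypotheses $|x-y|>\tfrac12$ (resp.\ $>1$) are exactly what is needed to absorb the $\tfrac14$ (resp.\ $\tfrac12$) contributed by the $|x|^2$ term.
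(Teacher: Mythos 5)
Your proof is correct, and it follows the same skeleton as the paper's: the first coordinate of $\hat T_{2,\epsilon}(u,v)$ is bounded by $1$ (resp.\ $\tfrac12$) because $a^{(\epsilon)}(u)$ is chosen at minimal real distance within a coset of $2\mathbb{Z}$ (resp.\ $\mathbb{Z}$), and the difference of the two coordinates transforms into $\lvert x-y\rvert/(\lvert x\rvert\,\lvert y\rvert)$ since the shift cancels. Where you differ is in how the separation bound is established: the paper assumes WLOG $u_\infty>0$, splits into cases according to the sign (and, in case (2), the magnitude) of $v_\infty$, and in the positive subcase writes $v_\infty=u_\infty+\tfrac12+\epsilon$ to estimate $u_\infty v_\infty$; you instead derive $\lvert x\rvert\,\lvert y\rvert<\lvert x-y\rvert$ (resp.\ $<2\lvert x-y\rvert$) in one stroke from the triangle inequality $\lvert y\rvert\le\lvert x-y\rvert+\lvert x\rvert$ together with $\lvert x\rvert\le\tfrac12$ (resp.\ $\le1$) and the strict hypothesis on $\lvert x-y\rvert$. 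This is a genuinely cleaner treatment of the key step: it removes the sign case analysis and the auxiliary parameter $\epsilon$, and it makes transparent that the strict inequalities $\lvert x-y\rvert>\tfrac12$ and $\lvert x-y\rvert>1$ are exactly what is needed. The paper's version, by tracking signs, incidentally exhibits sharper information in some subcases (e.g.\ $\lvert u'-v'\rvert\ge2$ when $v_\infty<0$), but none of that extra precision is used elsewhere, so nothing is lost by your route.
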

\begin{proof}
(1) We assume that $(u_{\infty},v_{\infty})\in D_{\infty}^0$.
We assume that $u_{\infty}>0$ without loss of generality.
Let $(u',v')=(\hat{T}_{2,0}(u,v))_{\infty}$.
By  Definition \ref{transformationT} we have
\begin{align*}
|u'|=\min\{\left|\frac{1}{u_{\infty}}-2i-\left\lfloor \left(\frac{1}{u}\right)_{\langle 2 \rangle} \right\rfloor_2\right|\mid i\in \mathbb{Z} \},
\end{align*}
which implies $|u'|\leq 1$. 
Since $(u',v')=(\frac{1}{u_{\infty}}-a^{(0)}(u),\frac{1}{v_{\infty}}-a^{(0)}(u))$,
we have 
\begin{align*}
|u'-v'|=\left|\frac{1}{u_{\infty}}-\frac{1}{v_{\infty}}\right|=\left|\frac{u_{\infty}-v_{\infty}}{u_{\infty}v_{\infty}}\right|.
\end{align*}
If it holds that $v_{\infty}<0$, we would have $|u'-v'|>\frac{1}{u_{\infty}}\geq 2$.
We assume $v_{\infty}>0$.
Since $|u_{\infty}-v_{\infty}|> \frac{1}{2}$ and $u_{\infty},v_{\infty}>0$, there exists $\epsilon>0$ such that
$v_{\infty}=u_{\infty}+\dfrac{1}{2}+\epsilon$.
We see 
\begin{align}\label{uv}
u_{\infty}v_{\infty}=u_{\infty}(u_{\infty}+\dfrac{1}{2}+\epsilon)\leq \dfrac{1}{2}(1+\epsilon)<\dfrac{1}{2}+\epsilon.
\end{align}
Therefore, we have 
\begin{align*}
|u'-v'|=\left|\frac{\frac{1}{2}+\epsilon}{u_{\infty}v_{\infty}}\right|>1.
\end{align*}
(2) We assume that $(u_{\infty},v_{\infty})\in D_{\infty}^1$.
We assume that $u_{\infty}>0$ without loss of generality.
Let $(u',v')=\hat{T}_{2,1}(u_{\infty},v_{\infty})$.
By  Definition \ref{transformationT} we have
\begin{align*}
|u'|=\min\{\left|\frac{1}{u_{\infty}}-i-\left\lfloor \left(\frac{1}{u}\right)_{\langle 2 \rangle} \right\rfloor_2\right|\mid i\in \mathbb{Z}\},
\end{align*}
which implies $|u'|\leq \frac{1}{2}$. 
Similarly, we have
\begin{align*}
|u'-v'|=\left|\frac{u_{\infty}-v_{\infty}}{u_{\infty}v_{\infty}}\right|.
\end{align*}
If it holds that $|v_{\infty}|\leq 2$, we would have $|u'-v'|=\frac{|u_{\infty}-v_{\infty}|}{|u_{\infty}v_{\infty}|}>\frac{1}{2}$.
We assume that $|v_{\infty}|>2$.
Then, we have $|u'-v'|=\dfrac{1}{u_{\infty}}-\dfrac{1}{v_{\infty}}>\dfrac{1}{u_{\infty}}-\dfrac{1}{2}\geq \dfrac{1}{2}$.
\end{proof}

\begin{rem}\label{infinity}
The lemma is extended as follows by adding $\infty$ as a domain.
We define
\begin{align*}
&\mathcal{D}_{\infty}^0:=\{(x,y)\in \mathbb{R}\times (\mathbb{R}\cup \{\infty\})\mid |x|\leq \frac{1}{2},|x-y|> \frac{1}{2}\},\\
&\mathcal{D}_{\infty}^1:=\{(x,y)\in \mathbb{R}\times (\mathbb{R}\cup \{\infty\})\mid |x|\leq 1,|x-y|> 1\}.
\end{align*}
The maps $\hat{T}_{2,\epsilon} (\epsilon\in \{0,1\})$ are extended from their domain $(K^{\times})^2$ to $K^{\times}\times (K^{\times}\cup {\infty})$ in the usual manner.
Then, we have:\\
For $(u,v)\in K^{\times}\times (K^{\times}\cup {\infty})$,  
\begin{enumerate}
\item[(1)]
If $(u_{\infty},v_{\infty})\in \mathcal{D}_{\infty}^0$, then $(\hat{T}_{2,0}(u,v))_{\infty}\in D_{\infty}^1,$
\item[(2)]
If $(u_{\infty},v_{\infty})\in \mathcal{D}_{\infty}^1$, then $(\hat{T}_{2,1}(u,v))_{\infty}\in D_{\infty}^0.$
\end{enumerate}

\end{rem}

\begin{lem}\label{Letalpha}
Let $\mathbf{a}=(\alpha,\beta)\in (K/\mathbb{Q})^2$ with  $\alpha\ne\beta$.
Then, either (1) or (2) holds.
\begin{enumerate}
\item[(1)] There exists $n\in \mathbb{Z}_{> 0}$ such that 
$(\mathbf{a}_{(2n)})_{\infty}\in D_{\infty}^0$,
which is equivalent to saying that there exists $n\in \mathbb{Z}_{> 0}$ such that $(\mathbf{a}_{(2n-1)})_{\infty}\in D_{\infty}^1$.

\item[(2)] For all $n\in \mathbb{Z}_{>0}$, $(\mathbf{a}_{(2n)})_{\infty}\notin D_{\infty}^0$
and one of the following holds:
\begin{enumerate}
\item[(i)] $\displaystyle \lim_{m\to \infty} (\mathbf{a}_{(2m)})_{\infty}=\left(\frac{1}{2},1\right)$ and
$\displaystyle \lim_{m\to \infty} (\mathbf{a}_{(2m+1)})_{\infty}=(-1,-2)$. 
\item[(ii)] 
 $\displaystyle \lim_{m\to \infty} (\mathbf{a}_{(2m)})_{\infty}=\left(-\frac{1}{2},-1\right)$ and
$\displaystyle \lim_{m\to \infty} (\mathbf{a}_{(2m+1)})_{\infty}=(1,2)$.
\end{enumerate}
\end{enumerate}
\end{lem}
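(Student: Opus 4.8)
The plan is to analyze the orbit of $\mathbf{a}=(\alpha,\beta)$ under the alternating maps $\hat T_{2,\epsilon}$ by tracking the real components $(u_n,v_n):=(\mathbf{a}_{(n)})_\infty$, and to argue that the only way case (1) can fail is for the orbit to be trapped by a boundary phenomenon described precisely in case (2). By Lemma~\ref{Dinfty} (and its extension in Remark~\ref{infinity}), once $(u_{2n-1},v_{2n-1})\in D_\infty^1$ we automatically get $(u_{2n},v_{2n})\in D_\infty^0$ and then $(u_{2n+1},v_{2n+1})\in D_\infty^1$, and so on; moreover $u_{2n}\in I_\infty^0$ and $u_{2n+1}\in I_\infty^1$ always hold as soon as the algorithm has run one step (these are exactly the inequalities $|u'|\le\frac12$, $|u'|\le 1$ extracted in the proof of Lemma~\ref{Dinfty}, which hold unconditionally from the minimality defining $a^{(\epsilon)}$). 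So the dichotomy is: either at some point the ``$|x-y|$ large'' condition holds — giving case (1) — or for every $n$ we have $|u_{2n}-v_{2n}|\le\frac12$ (equivalently $(u_{2n},v_{2n})\notin D_\infty^0$), and we must then pin down the limiting behavior claimed in (2)(i),(ii).

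First I would set up the equivalence inside case (1): since $\hat T_{2,0}$ sends $D_\infty^0$ into $D_\infty^1$ and conversely $\hat T_{2,1}$ sends $D_\infty^1$ into $D_\infty^0$, and since $u_{2n}\in I_\infty^0$, $u_{2n-1}\in I_\infty^1$ hold automatically, the statement ``$(\mathbf{a}_{(2n)})_\infty\in D_\infty^0$ for some $n$'' is equivalent to ``$(\mathbf{a}_{(2n-1)})_\infty\in D_\infty^1$ for some $n$'' — one direction is Lemma~\ref{Dinfty}(1) applied at step $2n$, the other is Lemma~\ref{Dinfty}(2) applied at step $2n-1$ together with the fact that a $D_\infty^0$ situation at an even index forces, going backwards, nothing — so really I should phrase it as: if $(u_{2n-1},v_{2n-1})\in D_\infty^1$ then $(u_{2n},v_{2n})\in D_\infty^0$, and if $(u_{2n},v_{2n})\in D_\infty^0$ then $(u_{2n+1},v_{2n+1})\in D_\infty^1$, so the first occurrence of either event triggers the other at the adjacent index. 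That disposes of the parenthetical ``which is equivalent to'' clause.

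Next, assume case (1) fails, i.e. $(u_{2n},v_{2n})\notin D_\infty^0$ for all $n$. Because $u_{2n}\in I_\infty^0$ always, failure means $|u_{2n}-v_{2n}|\le\frac12$ for all $n\ge1$. I want to extract the exact limits. Write $u_{2n+1}=\frac{1}{u_{2n}}-a^{(0)}(\mathbf{a}_{(2n)})$ and $v_{2n+1}=\frac{1}{v_{2n}}-a^{(0)}(\mathbf{a}_{(2n)})$, so $u_{2n+1}-v_{2n+1}=\frac{v_{2n}-u_{2n}}{u_{2n}v_{2n}}$, and similarly across the odd$\to$even step with the factor $1$ instead of $2$ in the rounding. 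Revisiting the computation in Lemma~\ref{Dinfty}(1): when $u_{2n},v_{2n}>0$ with $v_{2n}-u_{2n}$ close to $\frac12$ from below and $u_{2n}$ close to $\frac12$, the product $u_{2n}v_{2n}$ is close to $\frac12\cdot 1=\frac12$ and $|u_{2n+1}-v_{2n+1}|$ is close to $1$ from below — i.e. the orbit skates along the boundary of $D_\infty^1$. The inequality chain $u_{2n}v_{2n}=u_{2n}(u_{2n}+\tfrac12+\epsilon_n)\le\tfrac12(1+\epsilon_n)$ shows that $|u_{2n+1}-v_{2n+1}|>1$ unless $\epsilon_n$ (the excess of $v_{2n}-u_{2n}$ over $\tfrac12$) is nonpositive — but we also need the upper bound $|u_{2n+1}-v_{2n+1}|\le1$ coming from the failure hypothesis at step $2n+2$. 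Pushing both inequalities simultaneously forces $u_{2n}\to\frac12$, $v_{2n}\to1$ and then $u_{2n+1}=\frac1{u_{2n}}-a^{(0)}\to 2-a^{(0)}$; since $u_{2n+1}\in I_\infty^1$ the only consistent choice of rounding digit makes $u_{2n+1}\to-1$ and $v_{2n+1}\to-2$, which is case (2)(i); the case $u_{2n}<0$ gives (2)(ii) by the symmetry $(u,v)\mapsto(-u,-v)$, which commutes with the algorithm because the rounding in Definition~\ref{transformationT} picks the smallest minimizing integer (one should note the sign asymmetry in ``smallest'' is harmless in the limit). I would make this rigorous by defining $\epsilon_n$ and $\delta_n:=\frac12-u_{2n}$ (or $\frac12+u_{2n}$), deriving recursive two-sided bounds forcing $\epsilon_n,\delta_n\to0$, e.g. showing $\epsilon_n,\delta_n\ge0$ eventually and that they shrink, or that they are squeezed by the constraint $1\le\cdots$ impossible-to-violate both ways.

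The main obstacle I anticipate is the last step: converting the ``boundary-skating'' heuristic into a genuine proof that the even iterates converge, rather than merely accumulate, at $(\tfrac12,1)$. The subtlety is that failure of case (1) gives only the single inequality $|u_{2n}-v_{2n}|\le\frac12$ per step, and one must combine it across both the even$\to$odd and odd$\to$even transitions (using the matching bounds $|u_{2n+1}|\le\tfrac12$, $|u_{2n+2}-v_{2n+2}|\le\tfrac12$) to get a contraction. Concretely, the quantity to control is something like the gap $\tfrac12-|u_{2n}-v_{2n}|\ge0$ or the pair $(\tfrac12-u_{2n}, v_{2n}-1)$, and one needs a Lipschitz/monotonicity estimate for the composite return map $\hat T_{2,1}\circ\hat T_{2,0}$ near the fixed configuration $((\tfrac12,1))$ showing it is attracting along the relevant boundary and repelling transversally, so that staying in the ``bad'' region for all $n$ pins the orbit to the fixed configuration. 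I expect this to require a careful but elementary monotonicity argument on the composed fractional-linear maps, choosing the rounding digits to be the constants dictated by the target intervals, and checking that no other limiting digit pattern keeps $|u_{2n}-v_{2n}|\le\frac12$ indefinitely.
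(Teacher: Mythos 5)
Your setup and the equivalence inside case (1) are fine (that part is exactly Lemma~\ref{Dinfty} applied at adjacent indices, as in the paper). The genuine gap is the heart of case (2): you never actually prove that failure of (1) forces $(u_{2n},v_{2n})\to(\pm\tfrac12,\pm1)$ and $(u_{2n+1},v_{2n+1})\to(\mp1,\mp2)$, and you acknowledge this yourself (``the main obstacle \dots converting the boundary-skating heuristic into a genuine proof''). Worse, the local mechanism you propose cannot work as described, because it nowhere uses the hypothesis $\alpha\neq\beta$: the constraints you work with in the bad regime are only $|u_{2n}-v_{2n}|\le\tfrac12$ and $|u_{2n+1}-v_{2n+1}|\le 1$, and these are satisfied trivially, with no convergence to the boundary configuration, when $u_1=v_1$ (e.g.\ an orbit with $u_n=v_n$ for all $n$ can stay well inside the intervals forever). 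So any argument that pins the orbit to $(\tfrac12,1)$, $(-1,-2)$ must inject the positivity of $|u_1-v_1|$ somewhere, and your sketch does not. Relatedly, the inequality you quote from the proof of Lemma~\ref{Dinfty}(1), $u_{2n}v_{2n}=u_{2n}(u_{2n}+\tfrac12+\epsilon_n)\le\tfrac12(1+\epsilon_n)$, is only available when $|u_{2n}-v_{2n}|>\tfrac12$ (that is where $\epsilon_n>0$ is defined), which is exactly what fails in case (2); so the ``push both inequalities'' step is built on an estimate that does not apply in the regime you are analyzing.

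The paper's proof supplies precisely the missing global ingredient. From $u_{k+1}-v_{k+1}=(v_k-u_k)/(u_kv_k)$ one telescopes to
\begin{align*}
|u_1-v_1|=|u_1v_1|\cdots|u_nv_n|\,|u_{n+1}-v_{n+1}|\le |u_1v_1|\cdots|u_nv_n|,
\end{align*}
while the bad-regime bounds ($|u_{2m}|<\tfrac12$, $|v_{2m}|\le1$, $|u_{2m+1}|<1$, $|v_{2m+1}|\le2$) make each block $|u_{2m}v_{2m}u_{2m+1}v_{2m+1}|<1$. Since $\alpha\neq\beta$ gives $|u_1-v_1|>0$, no factor can be bounded away from its extreme value infinitely often, which already yields $|u_{2n}|\to\tfrac12$, $|v_{2n}|\to1$, $|u_{2n+1}|\to1$, $|v_{2n+1}|\to2$; only then does a finite interval-arithmetic argument (with $\epsilon=\tfrac18$) lock the signs and give (2)(i) or (2)(ii). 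Your proposal would be repaired by replacing the local contraction/monotonicity plan with this product argument; as written, the decisive convergence step is missing and the strategy offered in its place is not viable.
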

\begin{proof}
For each $n\in \mathbb{Z}_{>0}$, we put $(u_n,v_n):=(\mathbf{a}_{(n)})_{\infty}$.
We assume that for all $n\in \mathbb{Z}_{>0}$, $(\mathbf{a}_{(2n)})_{\infty}\notin D_{\infty}^0$.
From Lemma \ref{Dinfty}, we see that for all $n\in \mathbb{Z}_{\geq 0}$, $(\mathbf{a}_{(2n+1)})_{\infty}\notin D_{\infty}^1$.
Therefore, we have
\begin{align}
&|u_n-v_n|\leq \frac{1}{2} & \text{if $n\equiv 0 \mod\ 2$},\label{unvne}\\
&|u_n-v_n|\leq 1 & \text{if $n\equiv 1 \mod\ 2$}.\label{unvno}
\end{align}
Since it holds that $|v_n|\leq |u_n|+|v_n-u_n|$  for each $n\in \mathbb{Z}_{> 0}$,
we have 
\begin{align}\label{vnleq1}
&|v_n|\leq  1, &\text{if $n\equiv 0 \mod\ 2$},\\
&|v_n|\leq  2, &\text{if $n\equiv 1 \mod\ 2$}.\label{vnleq2} 
\end{align}
For $n\in \mathbb{Z}_{> 0}$, we see that $\displaystyle u_{n+1}=\frac{1}{u_{n}}-a_n(\alpha)$ and $\displaystyle v_{n+1}=\frac{1}{v_{n}}-a_n(\alpha)$
, which implies
\begin{align*}
|u_n-v_n|=|u_nv_n||u_{n+1}-v_{n+1}|.
\end{align*}
Therefore, for $n\in \mathbb{Z}_{> 0}$, we have
\begin{align}\label{u0v0}
|u_1-v_1|=|u_1v_1|\ldots |u_nv_n||u_{n+1}-v_{n+1}|\leq |u_1v_1|\ldots |u_nv_n|.
\end{align}
From (\ref{vnleq1}), (\ref{vnleq2}), and Lemma \ref{alphainK}, we can derive the inequality
for any $m \in \mathbb{Z}_{> 0}$
\begin{align}\label{u2m}
|u_{2m}||v_{2m}||u_{2m+1}||v_{2m+1}| < 1.
\end{align}
We assume that
there exists $0<\epsilon<1$ such that for infinitely many $j\in \mathbb{Z}_{>0}$
$|u_{2j}|<\dfrac{1}{2}\epsilon$.
Since $|u_{2j}|<\dfrac{1}{2}\epsilon$ implies 
$|u_{2j}||v_{2j}||u_{2j+1}||v_{2j+1}|<\epsilon$,
from (\ref{u0v0}) and (\ref{u2m})
we have 
\begin{align*}
|u_1-v_1|\leq \lim_{n\to \infty} |u_1v_1|\ldots |u_nv_n|=0.
\end{align*}
However, it contradicts $\alpha\ne\beta$.
Therefore, we have
\begin{align}\label{limu2n}
\lim_{n\to \infty}|u_{2n}|=\frac{1}{2}.
\end{align}
Similarly, we have
\begin{align}\label{limu2n+1}
\lim_{n\to \infty}|v_{2n}|=1,
\lim_{n\to \infty}|u_{2n+1}|=1,
\text{and }
\lim_{n\to \infty}|v_{2n+1}|=2.
\end{align}
Therefore, for $\epsilon=\dfrac{1}{8}$, there exists $N\in \mathbb{Z}_{> 0}$
such that for $n\geq N$, the following inequalities hold:
\begin{align}
&\left||u_{2n}|-\frac{1}{2}\right|<\epsilon,\hspace{1cm}
\left||v_{2n}|-1\right|<\epsilon, \label{u2n}\\
\text{and}\nonumber\\
&\left||u_{2n+1}|-1\right|<\epsilon,\hspace{1cm}
\left||v_{2n+1}|-2\right|<\epsilon.\label{u2n1}
\end{align}
We assume that 
\begin{align}\label{12u2N}
\frac{1}{2}-\epsilon<u_{2N}<\frac{1}{2}.
\end{align}
From (\ref{unvne}) and (\ref{u2n}), we have 
\begin{align}\label{1-epsilon}
1-\epsilon<v_{2N}<1.
\end{align}
Let us consider the range of $u_{2N+1}$.
First, we assume that  $1-\epsilon<u_{2N+1}<1$.
Since $u_{2N+1}=\dfrac{1}{u_{2N}}-a_{2N}(\alpha)$,
we have 
\begin{align}\label{-1dfrac1}
-1+\dfrac{1}{u_{2N}}<a_{2N}(\alpha)<-\dfrac{7}{8}+\dfrac{1}{u_{2N}}.
\end{align}
From the inequalities (\ref{12u2N}) and (\ref{-1dfrac1}), 
we have
\begin{align}\label{1<a2N}
1<a_{2N}(\alpha)<\frac{43}{24}.
\end{align}
Since $v_{2N+1}=\frac{1}{v_{2N}}-a_{2N}(\alpha)$, from (\ref{1-epsilon}) and (\ref{1<a2N}), we have
$-\frac{19}{24}<v_{2N+1}<\frac{1}{7}$, which contradicts (\ref{u2n1}).
Therefore, we have
\begin{align}\label{-1<u_2N+1}
-1<u_{2N+1}<-1+\epsilon.
\end{align}
From (\ref{unvno}), (\ref{u2n1}), and (\ref{-1<u_2N+1}), we have
\begin{align}\label{-1<v_2N+1}
-2<v_{2N+1}<-2+\epsilon.
\end{align}
Let us consider the range of $u_{2N+2}$.
First, we assume that  $-\frac{1}{2}<u_{2N+2}<-\frac{1}{2}+\epsilon$.
Since $u_{2N+2}=\dfrac{1}{u_{2N+1}}-a_{2N+1}(\alpha)$,
we have 
\begin{align}\label{38dfrac1}
\dfrac{3}{8}+\dfrac{1}{u_{2N+1}}<a_{2N+1}(\alpha)<\dfrac{1}{2}+\dfrac{1}{u_{2N+1}}.
\end{align}
From the inequalities (\ref{-1<u_2N+1}) and (\ref{38dfrac1}),  
we have
\begin{align}\label{4356}
-\frac{43}{56}<a_{2N+1}(\alpha)<-\frac{1}{2}.
\end{align}
Since $v_{2N+2}=\frac{1}{v_{2N+1}}-a_{2N+1}(\alpha)$, from (\ref{-1<v_2N+1}) and (\ref{4356}), we have
$-\frac{1}{30}<v_{2N+2}<\frac{15}{56}$, which contradicts (\ref{u2n}).
Therefore, we have
\begin{align}\label{-1<u_2N+2}
\frac{1}{2}-\epsilon<u_{2N+2}<\frac{1}{2}.
\end{align}
Therefore, we obtain the following inequalities recursively for $n\geq N$:
\begin{align*}
&\frac{1}{2}-\epsilon<u_{2n}<\frac{1}{2},\ \  1-\epsilon<v_{2n}<1,\\
&\text{and}\\
&-1<u_{2n+1}<-1+\epsilon,\ \  -2<v_{2N+1}<-2+\epsilon. 
\end{align*}
By taking into account this fact and considering (\ref{limu2n}) and (\ref{limu2n+1}), we can derive claim (2)(i) of the lemma.
If we assume that $-\frac{1}{2}<u_{2N}<-\frac{1}{2}+\epsilon$, 
we can derive claim (2)(ii) of the lemma in a similar manner.
\end{proof}

\begin{lem}\label{Letalpha2adic}
Let $\mathbf{a}=(\alpha,\beta)\in (K/\mathbb{Q})^2$ with $\alpha\ne\beta$.
Then, there exists $n\in \mathbb{Z}_{\geq 0}$ such that 
$(\mathbf{a}_{(2n+1)})_{\langle 2 \rangle}\in D_{2}^1$.
\end{lem}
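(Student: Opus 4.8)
The plan is to argue by contradiction, reusing the telescoping identity from the proof of \thmref{conver}. Suppose $(\mathbf{a}_{(2n+1)})_{\langle 2\rangle}\notin D_2^1$ for every $n\in\mathbb{Z}_{\ge 0}$; the aim is to deduce $\alpha_{\langle 2\rangle}=\beta_{\langle 2\rangle}$, hence $\alpha=\beta$ since $\sigma_2$ is a field embedding, contradicting the hypothesis. Write $(u_n,v_n):=(\mathbf{a}_{(n)})_{\langle 2\rangle}$, so that $u_n=(\alpha_{(n)})_{\langle 2\rangle}$ and $v_n=(\beta_{(n)})_{\langle 2\rangle}$, where $\alpha_{(n)}$ is the first coordinate of $\mathbf{a}_{(n)}$. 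Since $\alpha,\beta\in K/\mathbb{Q}$ and each digit $a_n(\alpha)$ lies in $\mathbb{Q}\subseteq K$, none of the $\alpha_{(n)}$ or $\beta_{(n)}$ can be rational; in particular they are all nonzero, the algorithm of Definition~\ref{algol3} never terminates, and $u_n,v_n\ne 0$ for all $n$. Also $u_n\ne v_n$ for all $n$: if $u_n=v_n$, then $\tfrac1{u_{n-1}}-a_{n-1}(\alpha)=\tfrac1{v_{n-1}}-a_{n-1}(\alpha)$ gives $u_{n-1}=v_{n-1}$, and by downward induction $u_1=v_1$, i.e.\ $\alpha=\beta$.

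Next I would record the $2$-adic sizes. By \lemref{lemord2} applied to the first coordinate, $v_2(u_n)\ge 0$ for all $n>0$, with $v_2(u_n)>0$ when $n$ is odd; hence $|u_n|_2\le 1$ for all $n$ and $|u_n|_2\le\tfrac12$ for odd $n$. For the second coordinate I invoke the contradiction hypothesis through \lemref{Letalpha=}. If $n=2m+1$ is odd, then $v_2(u_n)>0$ forces $v_2(v_n)>0$, for otherwise $(u_n,v_n)\in D_2^1$. If $n=2m$ is even with $m\ge 1$, then $\mathbf{a}_{(2m+1)}=\hat{T}_{2,0}(\mathbf{a}_{(2m)})$, so $(u_{2m},v_{2m})\in D_2^0$ would yield $(u_{2m+1},v_{2m+1})\in D_2^1$ by \lemref{Letalpha=}(1), contrary to the hypothesis; thus $(u_{2m},v_{2m})\notin D_2^0$, and together with $v_2(u_{2m})\ge 0$ this forces $v_2(v_{2m})\ge 0$. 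Consequently $|v_n|_2\le 1$ for all $n>0$.

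Finally, applying $\sigma_2$ to $\mathbf{a}_{(n+1)}=\hat{T}_{2,\tau(n)}(\mathbf{a}_{(n)})$ gives $u_{n+1}=\tfrac1{u_n}-a_n(\alpha)$ and $v_{n+1}=\tfrac1{v_n}-a_n(\alpha)$, exactly as in the proof of \thmref{conver}, so $u_{n+1}-v_{n+1}=\dfrac{v_n-u_n}{u_nv_n}$ and $|u_n-v_n|_2=|u_nv_n|_2\,|u_{n+1}-v_{n+1}|_2$ for every $n$. Iterating from $n=1$,
\[ |u_1-v_1|_2=|u_1v_1u_2v_2\cdots u_nv_n|_2\,|u_{n+1}-v_{n+1}|_2\qquad(n\in\mathbb{Z}_{>0}). \]
By the size bounds above, $|u_{n+1}-v_{n+1}|_2\le\max\{|u_{n+1}|_2,|v_{n+1}|_2\}\le 1$, while $|u_1v_1\cdots u_nv_n|_2\le\prod_{1\le k\le n,\ k\ \mathrm{odd}}|u_k|_2\le(\tfrac12)^{\lceil n/2\rceil}$. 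Letting $n\to\infty$ gives $|u_1-v_1|_2=0$, i.e.\ $\alpha_{\langle 2\rangle}=\beta_{\langle 2\rangle}$, the desired contradiction.

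The telescoping step is routine — it is essentially the computation already done for \thmref{conver}. The real work, and the place to be careful, is the valuation bookkeeping: one must check that all $\alpha_{(n)},\beta_{(n)}$ are nonzero (so the identity is legitimate at every step) and that the assumption ``$(\mathbf{a}_{(2n+1)})_{\langle 2\rangle}\notin D_2^1$ for all $n$'' genuinely propagates, via \lemref{Letalpha=}, to the bound $v_2(v_n)\ge 0$ at every index. I expect that to be the only real obstacle; nothing beyond elementary $2$-adic estimates is needed, and in particular the real-dynamics dichotomy of \lemref{Letalpha} is not required here.
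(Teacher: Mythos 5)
Your proof is correct and follows essentially the same route as the paper's: assume no odd-indexed iterate lands in $D_{2}^1$, propagate the valuation bounds $v_2(v_{2n+1})>0$, $v_2(v_{2n})\ge 0$ via Lemma~\ref{Letalpha=} and Lemma~\ref{lemord2}, and use the telescoping identity $|u_1-v_1|_2\le |u_1v_1|_2\cdots|u_nv_n|_2$ to force $\alpha_{\langle 2\rangle}=\beta_{\langle 2\rangle}$, contradicting $\alpha\ne\beta$. Your extra bookkeeping (non-termination of the algorithm and $u_n\ne v_n$) is a harmless refinement of details the paper leaves implicit.
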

\begin{proof}
We put $(u_n,v_n):=(\mathbf{a}_{(n)})_{\langle 2 \rangle}$ for $n\in \mathbb{Z}_{> 0}$. 
According to Lemma \ref{lemord}, for all $n\geq 1$, the following inequalities hold:
\begin{align}\label{v_pu_2n}
v_p(u_{2n})\geq  0 \text{  and  }  v_p(u_{2n-1})> 0.
\end{align}

We assume that for all $n\in \mathbb{Z}_{\geq  0}$, $(u_{2n+1},v_{2n+1})\notin D_{2}^1$.
From Lemma \ref{Letalpha=}, we see that for all $n\in \mathbb{Z}_{> 0}$, $(u_{2n},v_{2n})\notin D_{2}^0$.
Therefore, we have for all $n\in \mathbb{Z}_{\geq 0}$
\begin{align}\label{v_pv_2n}
v_p(v_{2n+2})\geq 0 \text{  and  }  v_p(v_{2n+1})> 0.
\end{align}
For $n\in \mathbb{Z}_{> 0}$, we see that $\displaystyle u_{n+1}=\frac{1}{u_{n}}-a_n(\alpha)$ and $\displaystyle v_{n+1}=\frac{1}{v_{n}}-a_n(\alpha)$
, which implies
\begin{align*}
|u_n-v_n|_2=|u_nv_n|_2|u_{n+1}-v_{n+1}|_2.
\end{align*}
Therefore, for $n\in \mathbb{Z}_{> 0}$, we have
\begin{align}\label{u0v02}
|u_1-v_1|_2=|u_1v_1|_2\ldots |u_nv_n|_2|u_{n+1}-v_{n+1}|_2\leq |u_1v_1|_2\ldots |u_nv_n|_2.
\end{align}
From (\ref{v_pu_2n}), (\ref{v_pv_2n}), and (\ref{u0v02}), 
as $n \to \infty$, we can conclude that $|u_1 - v_1|_2$ tends to 0, which implies $u_1=v_1$.
This contradicts that  $\alpha\ne\beta$.
\end{proof}

We would like to expect that $\{\frac{p_n}{q_n}\}$ converges to $\alpha_{\infty}$, but for now, let's present a weaker form.

\begin{lem}\label{convergentsp2adicda}
For  all $n\in \mathbb{Z}_{> 0}$, it holds that $|\alpha_{\infty}-\frac{p_n}{q_n}|<1$.
\end{lem}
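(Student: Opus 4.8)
The plan is to use the skew-product algorithm of Definition~\ref{algol3} applied to the pair $\mathbf{a}=(\alpha,\frac{p_n}{q_n})$, exactly as in the proof of Theorem~\ref{conver}, but now tracking the \emph{real} embeddings rather than the $p$-adic ones. The key observation is that, just as in Theorem~\ref{conver}, induction gives $\mathbf{a}_{(j)}=(\alpha_{(j)},[0;a_j(\alpha),\ldots,a_n(\alpha)])$ for $1\le j\le n$ and $\mathbf{a}_{(n+1)}=(\alpha_{(n+1)},0)$. Writing $(u_k,v_k)=(\mathbf{a}_{(k)})_\infty$, the identity $|u_k-v_k|=|u_kv_k|\,|u_{k+1}-v_{k+1}|$ telescopes to
\begin{align*}
\left|\alpha_\infty-\tfrac{p_n}{q_n}\right| = |u_1-v_1| = |u_1v_1|\cdots|u_nv_n|\,|u_{n+1}|,
\end{align*}
since $v_{n+1}=0$. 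So it suffices to bound this product by $1$.

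\textbf{Key steps.} First I would separate two cases according to whether the orbit of $\mathbf{a}=(\alpha,\frac{p_n}{q_n})$ ever lands in $D_\infty^0$ at an even step (equivalently $D_\infty^1$ at an odd step). Here one must be slightly careful because $\frac{p_n}{q_n}\in\mathbb{Q}$, so strictly we are outside the hypothesis $(\alpha,\beta)\in(K/\mathbb{Q})^2$ of Lemma~\ref{Letalpha}; however, the second coordinate becomes $0$ at step $n+1$ and is handled by Remark~\ref{infinity} (allowing the value at infinity / degenerate pairs), so the same dichotomy applies up to step $n$. If at some even index $2m\le n$ we have $(\mathbf{a}_{(2m)})_\infty\in D_\infty^0$, then from that point on Lemma~\ref{Dinfty} forces $(\mathbf{a}_{(k)})_\infty$ to alternate between $D_\infty^0$ and $D_\infty^1$, and combined with Lemma~\ref{alphainK} (which bounds $|u_k|$ by $\tfrac12$ for even $k$ and by $\tfrac p2=1$ for odd $k$) the factors $|u_{2j}||v_{2j}||u_{2j+1}||v_{2j+1}|$ are $<1$ for the tail, while the initial segment is controlled by $|u_k|\le 1$ and $|v_k|\le 1$ (even), $|v_k|\le 2$ (odd) coming from $|u_k-v_k|\le 1$. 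The product of all factors, together with the final $|u_{n+1}|\le 1$, is then $\le 1$ (and the bound is strict since membership in $D_\infty^0$ is a strict inequality $|x-y|>\tfrac12$, and $\alpha\ne\frac{p_n}{q_n}$ makes $u_1\ne v_1$). In the complementary case, where the orbit never meets $D_\infty^0$ at an even step, the analysis inside the proof of Lemma~\ref{Letalpha} shows all factors $|u_{2m}||v_{2m}||u_{2m+1}||v_{2m+1}|<1$ directly (this is inequality~(\ref{u2m}) there), so again the telescoped product is $<1$.

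\textbf{Main obstacle.} The delicate point is bookkeeping at the two ends of the telescoping product: near index $1$ one needs a uniform bound on $|u_k|,|v_k|$ before the orbit has entered the ``good'' region $D_\infty^0$, and near index $n+1$ one needs that the appearance of $v_{n+1}=0$ (equivalently $v_n=\infty$ in the extended picture) does not spoil the inequalities — this is precisely why Remark~\ref{infinity} with the domains $\mathcal{D}_\infty^\epsilon$ is invoked. Once one checks that the pair $(\alpha,\frac{p_n}{q_n})$ enters $\mathcal{D}_\infty^0$ within finitely many even steps (which follows from Lemma~\ref{Letalpha} applied in the extended setting, since the alternative periodic limit cases (2)(i),(2)(ii) cannot occur when the second coordinate reaches $0$), the chain of factors is cleanly $\le 1$, and strictness gives $|\alpha_\infty-\frac{p_n}{q_n}|<1$. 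I expect reconciling the strict-versus-nonstrict inequalities across the cases to be the only genuinely fiddly part; everything else is a direct reuse of Lemmas~\ref{Dinfty}, \ref{Letalpha}, \ref{alphainK} and the telescoping identity.
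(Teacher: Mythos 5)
Your setup (applying Algorithm (Definition \ref{algol3}) to $\mathbf{a}=(\alpha,\frac{p_n}{q_n})$ and computing $\mathbf{a}_{(j)}$ by induction) is the same as the paper's, but the route you take afterwards has a genuine gap. The telescoped product $|u_1-v_1|=|u_1v_1|\cdots|u_nv_n|\,|u_{n+1}|$ can only be estimated if you control the second coordinates $|v_k|$, and the bounds you use ($|v_k|\le 1$ for even $k$, $|v_k|\le 2$ for odd $k$) are available exactly when $(u_k,v_k)\notin D_{\infty}^{\tau(k)}$, i.e.\ when $|u_k-v_k|\le\frac{1}{2}(1+\tau(k))$. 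In your first case, where the orbit is assumed to enter $D_{\infty}^{0}$ at an even step, these bounds fail by definition: inside $D_{\infty}^{\tau(k)}$ the difference $|u_k-v_k|$ is large and $|v_k|$ is a priori unbounded, so the claim that the initial segment is ``controlled by $|u_k-v_k|\le 1$'' and that the tail factors are $<1$ is unjustified --- and in fact circular, since the product equals the very quantity $|u_1-v_1|$ you are trying to bound. The attempted repair via Lemma \ref{Letalpha} does not work either: that lemma assumes $\beta\in K/\mathbb{Q}$ and an infinite orbit, whereas here $\beta=\frac{p_n}{q_n}\in\mathbb{Q}$ and the orbit terminates at step $n+1$; moreover the conclusion you extract from it (that the pair does enter $\mathcal{D}_{\infty}^{0}$) is the opposite of what is true and of what you need.

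The missing idea is the paper's short backward induction. Since $\mathbf{a}_{(n+1)}=(\alpha_{(n+1)},0)$ and $|(\alpha_{(n+1)})_{\infty}|<\frac{1}{2}(1+\tau(n+1))$ by Lemma \ref{alphainK}, the terminal pair satisfies $(\mathbf{a}_{(n+1)})_{\infty}\notin D_{\infty}^{\tau(n+1)}$. Lemma \ref{Dinfty} propagates membership in $D_{\infty}^{\tau(k)}$ forward along the orbit, so its contrapositive, applied from $k=n$ down to $k=1$, gives $(\mathbf{a}_{(k)})_{\infty}\notin D_{\infty}^{\tau(k)}$ for all $k\le n+1$; in particular $(\alpha_{\infty},\frac{p_n}{q_n})\notin D_{\infty}^{1}$, and since $|\alpha_{\infty}|\le 1$ by property $I$ the bound follows immediately. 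So your ``case (1)'' never occurs, and once the non-entry statement is known the telescoping product is superfluous: the estimate is read off directly at $k=1$. If you insist on the telescoping argument, you must first establish this non-entry statement, and the tool for that is precisely the backward induction just described (your minor bookkeeping issues with the grouping of factors in the complementary case are fixable, but they are not the main problem).
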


\begin{proof}
Let $n\in \mathbb{Z}_{> 0}$.
Let $\mathbf{a}=(\alpha,\frac{p_n}{q_n})$.
We apply Algorithm (Definition \ref{algol3}) to $\mathbf{a}$.
By induction, we obtain the following:\\
For $1\leq j \leq n$,
\begin{align*}
\mathbf{a}_{(j)}=(\alpha_{(j)},[0;a_j(\alpha),a_{j+1}(\alpha),\ldots,a_{n}(\alpha)]).
\end{align*}
and
\begin{align*}
\mathbf{a}_{(n+1)}=(\alpha_{(n+1)},0).
\end{align*}
From Lemma \ref{alphainK}, we see that $(\alpha_{(n+1)})_{\infty}\in I_{\infty}^{\tau(n+1)}$.
Therefore, we obtain $(\mathbf{a}_{(n+1)})_{\infty}\notin D_{\infty}^{\tau(n+1)}$. 
Hence, from Lemma \ref{Dinfty}, we have $(\mathbf{a}_{(n)})_{\infty}\notin D_{\infty}^{\tau(n)}$.
By induction, we obtain $(\mathbf{a}_{(1)})_{\infty}\notin D_{\infty}^{1}$.
Therefore, we have $|\alpha_{\infty}-\frac{p_n}{q_n}|<1$.
\end{proof}

The following corollary follows from the proof of Lemma \ref{convergentsp2adicda}.

\begin{cor}\label{convergentsp2adicdaco}
For  all $n,m\in \mathbb{Z}_{> 0}$ with $m\leq n$, it holds that 
\begin{align*}
|(\alpha_{(m)})_{\infty}-\left[0;a_m(\alpha),a_{m+1}(\alpha),\ldots,a_{n}(\alpha)\right]|<\dfrac{1+\tau(m)}{2}.
\end{align*}
\end{cor}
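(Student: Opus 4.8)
The plan is to extract the needed bound directly from the inductive argument already carried out in the proof of \lemref{convergentsp2adicda}, rather than re-running a fresh induction. The point is that in that proof we applied Algorithm (Definition \ref{algol3}) to $\mathbf{a}=(\alpha,\frac{p_n}{q_n})$ and obtained, for every $1\leq j\leq n$,
\begin{align*}
\mathbf{a}_{(j)}=\left(\alpha_{(j)},\,[0;a_j(\alpha),a_{j+1}(\alpha),\ldots,a_{n}(\alpha)]\right),
\end{align*}
together with the conclusion $(\mathbf{a}_{(j)})_{\infty}\notin D_{\infty}^{\tau(j)}$ for every such $j$ (this is exactly the chain of implications, run backwards from $j=n+1$ to $j=1$, using \lemref{Dinfty}). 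The second coordinate of $\mathbf{a}_{(m)}$ is precisely $[0;a_m(\alpha),\ldots,a_n(\alpha)]$, and its first coordinate is $(\alpha_{(m)})_{\infty}$, so the statement ``$(\mathbf{a}_{(m)})_{\infty}\notin D_{\infty}^{\tau(m)}$'' unwinds, by \defnref{domain}, to a bound on $|(\alpha_{(m)})_{\infty}-[0;a_m(\alpha),\ldots,a_n(\alpha)]|$.

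So the key steps are: first, recall from \lemref{alphainK} that $(\alpha_{(m)})_{\infty}\in I_{\infty}^{\tau(m)}$ for all $m$, i.e. $|(\alpha_{(m)})_{\infty}|\leq \frac12$ when $m$ is even and $\leq 1$ when $m$ is odd; this guarantees that the first-coordinate condition defining $D_{\infty}^{\tau(m)}$ is satisfied, so that $(\mathbf{a}_{(m)})_{\infty}\notin D_{\infty}^{\tau(m)}$ can only fail through the second condition. Second, read off from \defnref{domain} that $(x,y)\notin D_{\infty}^{0}$ with $|x|\leq\frac12$ forces $|x-y|\leq\frac12$, and $(x,y)\notin D_{\infty}^{1}$ with $|x|\leq 1$ forces $|x-y|\leq 1$; in both cases this is exactly $|x-y|\leq \frac{1+\tau(m)}{2}$. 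Third, apply this with $x=(\alpha_{(m)})_{\infty}$ and $y=[0;a_m(\alpha),\ldots,a_n(\alpha)]$, using the fact established inside the proof of \lemref{convergentsp2adicda} that $(\mathbf{a}_{(m)})_{\infty}\notin D_{\infty}^{\tau(m)}$ for all $m$ with $1\leq m\leq n$ (the induction there actually produces this intermediate statement for every index, not just $m=1$). To get the strict inequality stated in the corollary one notes that $\alpha_{(m)}\in K/\mathbb{Q}$ is irrational while $[0;a_m(\alpha),\ldots,a_n(\alpha)]\in\mathbb{Q}$, so the difference is nonzero and in fact cannot equal the boundary value; alternatively, one can simply invoke that the boundary cases of \lemref{Dinfty} / \defnref{domain} are handled with strict inequalities throughout the original argument.

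There is really no hard analytic content here: everything is a bookkeeping consequence of the backwards induction in \lemref{convergentsp2adicda} combined with \lemref{alphainK}. The only point requiring a little care — and the place I would be most careful in writing it out — is making explicit that the induction in \lemref{convergentsp2adicda} establishes ``$(\mathbf{a}_{(j)})_{\infty}\notin D_{\infty}^{\tau(j)}$'' for \emph{all} $j$ in the range, so that it can be specialized at an arbitrary $m\leq n$ rather than only at $m=1$; since the corollary is explicitly stated as following ``from the proof of'' the lemma, this is legitimate, but the write-up should say so. A secondary minor point is the strictness of the inequality, which I would justify by the rationality/irrationality mismatch noted above.
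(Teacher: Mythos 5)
Your proposal is correct and is essentially the paper's own argument: the paper states the corollary as following from the proof of Lemma \ref{convergentsp2adicda}, namely that the backward induction there already yields $(\mathbf{a}_{(j)})_{\infty}\notin D_{\infty}^{\tau(j)}$ for every $1\leq j\leq n$, which together with Lemma \ref{alphainK} and Definition \ref{domain} unwinds to the stated bound at index $m$. Your extra remark justifying strictness (the difference of an irrational $(\alpha_{(m)})_{\infty}$ and a rational convergent cannot equal the rational boundary value) fills a point the paper leaves implicit.
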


We observe that, employing the same reasoning as in Lemma \ref{convergentsp2adic}, the continued fraction $[a_{n-1}(\alpha);\ldots,a_{1}(\alpha)]$ is well-defined for $n\in \mathbb{Z}_{> 1}$, and this continued fraction is addressed in the following lemma.

\begin{lem}\label{inversecontinued}
For  all $n\in \mathbb{Z}_{> 1}$, it holds that
\begin{align*}
\left|(\alpha_{(n)})_{\infty}+[a_{n-1}(\alpha);\ldots,a_{1}(\alpha)]\right|
>\dfrac{1}{2}(1+\tau(n)).
\end{align*}
\end{lem}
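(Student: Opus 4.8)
The plan is to relate the quantity $(\alpha_{(n)})_{\infty}+[a_{n-1}(\alpha);\ldots,a_1(\alpha)]$ to a ratio of denominators of the ``reversed'' continued fraction, and then to invoke the bound already established in Corollary~\ref{convergentsp2adicdaco}. First I would set up the reversed convergents: write $r_k/s_k$ for the value of $[a_{n-1}(\alpha);a_{n-2}(\alpha),\ldots,a_{n-k}(\alpha)]$, which by the matrix formalism of Definition~\ref{convergents} is governed by the same products of $\begin{pmatrix}0&1\\1&a_j\end{pmatrix}$ read in the opposite order; the point of reversing is that the matrices are transposes of one another, so $s_{n-1}=q_{n-1}$ and $r_{n-1}=p_{n-1}$ up to the standard transpose identity, while the denominator $s_k$ equals the numerator that appears when one truncates the forward expansion of $\alpha$ from the $m$-th term. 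Concretely, the identity I want is that the partial tail $[0;a_m(\alpha),\ldots,a_{n-1}(\alpha)]$ and the reversed head $[a_{n-1}(\alpha);\ldots,a_m(\alpha)]$ are tied together through $p_{n-1},q_{n-1},p_{m-1},q_{m-1}$ by the usual $2\times 2$ determinant relation $p_nq_{n-1}-p_{n-1}q_n=\pm 1$.

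Next I would take $m=1$ in Corollary~\ref{convergentsp2adicdaco}, applied not to $\alpha$ itself but to the ``shifted'' system whose partial quotients are $a_1(\alpha),\ldots,a_{n-1}(\alpha)$ in reverse. The corollary gives $|(\beta_{(1)})_{\infty}-[0;a_1,\ldots,a_{n-1}]|<1$ for the relevant $\beta$, i.e. for the orbit point of the reversed algorithm; what I actually need is the statement about $|(\alpha_{(n)})_{\infty}+[a_{n-1};\ldots,a_1]|$, so the real content is an algebraic identity expressing $(\alpha_{(n)})_{\infty}$ in terms of $\alpha_{\infty}$ and the forward convergents. Using $\alpha_{(n+1)}=T_{2,\tau(n)}(\alpha_{(n)})$ repeatedly, one gets the closed form $\alpha_{\infty}=\dfrac{p_{n-1}+p_n(\alpha_{(n)})_{\infty}}{q_{n-1}+q_n(\alpha_{(n)})_{\infty}}$ (the usual Möbius relation between the original point and the $n$-th orbit point), and solving this for $(\alpha_{(n)})_{\infty}$ yields $(\alpha_{(n)})_{\infty}=-\dfrac{q_{n-1}\alpha_{\infty}-p_{n-1}}{q_n\alpha_{\infty}-p_n}$. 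On the other hand $[a_{n-1};\ldots,a_1]=q_n/q_{n-1}$ by the transpose/reversal identity for continued fractions. Adding these two and clearing denominators, the numerator collapses (via $p_nq_{n-1}-p_{n-1}q_n=\pm 1$) to something of the form $\pm 1/(q_{n-1}(q_n\alpha_{\infty}-p_n))$, so that
\begin{align*}
\left|(\alpha_{(n)})_{\infty}+[a_{n-1}(\alpha);\ldots,a_1(\alpha)]\right|
=\frac{1}{|q_{n-1}|\,|q_n\alpha_{\infty}-p_n|}
=\frac{1}{|q_{n-1}q_n|\,\bigl|\alpha_{\infty}-\tfrac{p_n}{q_n}\bigr|}.
\end{align*}

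Finally I would bound the right-hand side from below. Corollary~\ref{convergentsp2adicdaco} with $m=1$ gives $\bigl|\alpha_{\infty}-\tfrac{p_n}{q_n}\bigr|<\tfrac{1+\tau(1)}{2}=1$; more usefully, applying it at the appropriate $m$ together with the Möbius relation gives $|q_n\alpha_\infty-p_n|<\tfrac{1+\tau(n+1)}{2}\cdot\tfrac{1}{|q_n|}\cdot|\text{something}|$, which after keeping track of the parity factor $\tau(n)$ and the elementary bounds $|a_k|$ from Lemma~\ref{alphainK} (even partial quotients have $|(\alpha_{(k)})_\infty|<\tfrac12$, odd ones $<\tfrac{p}{2}$) forces $|q_{n-1}q_n|\,\bigl|\alpha_\infty-\tfrac{p_n}{q_n}\bigr|<\tfrac{2}{1+\tau(n)}$. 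Inverting this inequality gives exactly the claimed lower bound $\tfrac12(1+\tau(n))$. The main obstacle I anticipate is bookkeeping the parity: the factor $\tfrac{1+\tau(m)}{2}$ in Corollary~\ref{convergentsp2adicdaco} alternates between $\tfrac12$ and $1$ depending on whether the index is even or odd, and under reversal the roles of even and odd indices swap, so one must check carefully that the reversed tail lands in the domain $D_\infty^{\tau(n)}$ with the correct superscript — getting the constant $\tfrac12(1+\tau(n))$ rather than its reciprocal or the other parity value is the delicate point, and I would verify it by running the small cases $n=2,3$ explicitly against the matrix computation before asserting the general recursion.
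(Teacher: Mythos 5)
Your reduction has a genuine gap, and as written it is circular. After fixing the indexing (see below), your determinant collapse gives the exact identity
\begin{align*}
\left|(\alpha_{(n)})_{\infty}+[a_{n-1}(\alpha);\ldots,a_{1}(\alpha)]\right|
=\frac{1}{q_{n-2}^2\left|\alpha_{\infty}-\tfrac{p_{n-2}}{q_{n-2}}\right|},
\end{align*}
so the claimed lower bound $\tfrac12(1+\tau(n))$ is \emph{equivalent} to the estimate $q_{m}^2\,|\alpha_{\infty}-p_m/q_m|<\tfrac{2}{1+\tau(m)}$ at $m=n-2$. But that estimate is precisely Lemma~\ref{boundf}, which in the paper is \emph{deduced from} Lemma~\ref{inversecontinued} via equation (\ref{q2nleft}); you may not assume it here. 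Your suggestion that it follows from Corollary~\ref{convergentsp2adicdaco} ``together with the Möbius relation and parity bookkeeping'' is the entire missing content: that corollary only bounds differences $|(\alpha_{(m)})_{\infty}-[0;a_m,\ldots,a_n]|$ and carries no information about $|q_n|$ or any lower bound on the denominators $q_{n-1}/q_n + a_{n+1}+\alpha_{(n+2)}$; since the partial quotients here are neither positive nor integral, the classical positivity arguments that would convert such a bound into a $1/q^2$ estimate are unavailable, and the natural way to obtain the needed lower bound is exactly the statement being proved. There are also concrete indexing errors: with the paper's conventions the Möbius relation is $\alpha=\dfrac{p_{n-2}\alpha_{(n)}+p_{n-1}}{q_{n-2}\alpha_{(n)}+q_{n-1}}$, and Lemma~\ref{inversecontinued2} gives $[a_{n-1}(\alpha);\ldots,a_1(\alpha)]=q_{n-1}/q_{n-2}$, not $q_n/q_{n-1}$; with the pair of formulas you wrote, the numerator does not collapse to $\pm1$ at all.

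For comparison, the paper avoids this circle entirely by a dynamical argument: apply the skew product (Definition~\ref{algol3}) to $\mathbf{a}=(\alpha,\infty)$, check by induction that the second coordinate of $\mathbf{a}_{(n)}$ is $-[a_{n-1}(\alpha);\ldots,a_1(\alpha)]$, and then use the alternation of the domains $\mathcal{D}_{\infty}^{0},\mathcal{D}_{\infty}^{1}$ under $\hat{T}_{2,0},\hat{T}_{2,1}$ (Lemma~\ref{Dinfty} and Remark~\ref{infinity}); membership of $\mathbf{a}_{(n)}$ in $\mathcal{D}_{\infty}^{\tau(n)}$ is literally the claimed inequality. If you want to keep your algebraic route, you must first give an independent proof of the Lemma~\ref{boundf}-type inequality, which in effect means redoing the domain-invariance estimates of Lemma~\ref{Dinfty}; otherwise the argument assumes what it is meant to establish.
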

\begin{proof}
Let $\mathbf{a}=(\alpha,\infty)$.
We apply Algorithm (Definition \ref{algol3}) to $\mathbf{a}$.
By induction, we obtain the following:\\
For  $n\in \mathbb{Z}_{\geq 2}$
\begin{align*}
\mathbf{a}_{(n)}=(\alpha_{(n)},-[a_{n-1}(\alpha);\ldots,a_{1}(\alpha)]).
\end{align*}
In fact, we have $\mathbf{a}_{(2)}=(\alpha_{(2)}, -a_{1}(\alpha))=(\alpha_{(2)}, -[a_{1}(\alpha)])$.
We assume that the claim holds for $n>1$.
Then, we have 
\begin{align*}
\mathbf{a}_{(n+1)}=&\left(\alpha_{(n+1)},-\dfrac{1}{[a_{n-1}(\alpha);\ldots,a_{1}(\alpha)]}-a_{n}(\alpha)\right)\\
=&(\alpha_{(n+1)},-[a_{n}(\alpha);\ldots,a_{1}(\alpha)]).
\end{align*}
Since $\mathbf{a}\in \mathcal{D}_{\infty}^{1}$ holds, 
based on Lemma \ref{Dinfty}, \ref{alphainK}, and Remark \ref{infinity}, we can conclude that for $n \in \mathbb{Z}_{\geq 1}$, $\mathbf{a}_{(n)}$ belongs to $\mathcal{D}_{\infty}^{\tau(n)}$.
This result implies the statement of the theorem.
\end{proof}

Following Lemma is proved easily, so that we omit the proof.
\begin{lem}\label{inversecontinued2}
For  $n\in \mathbb{Z}_{\geq 1}$, it holds that
\begin{align*}
\dfrac{q_{n-1}}{q_{n}}=[0;a_{n}(\alpha),\ldots,a_{1}(\alpha)].
\end{align*}

\end{lem}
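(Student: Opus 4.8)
The plan is to argue by induction on $n$, using only the matrix definition of the convergents (Definition~\ref{convergents}) together with the fact, noted right after Lemma~\ref{convergentsp2adic}, that $q_n\ne 0$ for every $n\in\mathbb{Z}_{>0}$, so that the formal continued-fraction manipulations below are legitimate. First I would record the three-term recurrence by splitting the last factor off the defining product: from
\begin{align*}
\begin{pmatrix} p_{n} & p_{n+1} \\ q_{n} & q_{n+1} \end{pmatrix}
=\begin{pmatrix} p_{n-1} & p_n \\ q_{n-1} & q_n \end{pmatrix}
\begin{pmatrix} 0 & 1 \\ 1 & a_{n+1}(\alpha) \end{pmatrix}
\end{align*}
one reads off the bottom row that $q_{n+1}=a_{n+1}(\alpha)\,q_n+q_{n-1}$ for all $n\ge 1$ (and that the labelling of $q_n$ is consistent under this splitting).

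For the base case $n=1$, Definition~\ref{convergents} gives directly $\begin{pmatrix} p_0 & p_1 \\ q_0 & q_1 \end{pmatrix}=\begin{pmatrix} 0 & 1 \\ 1 & a_1(\alpha) \end{pmatrix}$, so $q_0/q_1=1/a_1(\alpha)=[0;a_1(\alpha)]$. For the inductive step, assuming $q_{n-1}/q_n=[0;a_n(\alpha),\ldots,a_1(\alpha)]$, divide the recurrence by $q_n$:
\begin{align*}
\frac{q_n}{q_{n+1}}=\frac{1}{a_{n+1}(\alpha)+q_{n-1}/q_n}
=\frac{1}{a_{n+1}(\alpha)+[0;a_n(\alpha),\ldots,a_1(\alpha)]}
=[0;a_{n+1}(\alpha),a_n(\alpha),\ldots,a_1(\alpha)],
\end{align*}
which closes the induction.

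There is no genuine obstacle here; the only point requiring care is that every denominator appearing in these identities be nonzero, which is exactly what Lemma~\ref{convergentsp2adic} supplies, and that the index bookkeeping in the recurrence be kept straight. Alternatively, one can avoid the induction entirely by transposing the product in Definition~\ref{convergents}: each factor $\begin{pmatrix} 0 & 1 \\ 1 & a_k(\alpha) \end{pmatrix}$ is symmetric, so $\begin{pmatrix} p_{n-1} & q_{n-1} \\ p_n & q_n \end{pmatrix}$ equals the product of the same matrices in reverse order, which is precisely the convergent matrix of the continued fraction $[0;a_n(\alpha),\ldots,a_1(\alpha)]$; comparing the $(1,2)$- and $(2,2)$-entries gives $q_{n-1}/q_n=[0;a_n(\alpha),\ldots,a_1(\alpha)]$.
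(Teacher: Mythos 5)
Your proof is correct. The paper simply states that this lemma ``is proved easily'' and omits the argument, and what you supply is exactly the standard reasoning it has in mind: the three-term recurrence $q_{n+1}=a_{n+1}(\alpha)q_n+q_{n-1}$ read off from Definition~\ref{convergents}, combined with the nonvanishing of the $q_n$ from Lemma~\ref{convergentsp2adic}, closes the induction, and your alternative via transposing the product of symmetric matrices is the classical ``mirror formula'' argument and is equally valid.
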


\begin{lem}\label{boundf}
Let $\alpha\in K/\mathbb{Q}$ with property $I$.
For all $n\in \mathbb{Z}_{\geq 1}$, it holds that
\begin{align*}
\left|\alpha_{\infty}-\dfrac{p_n}{q_n}\right|<\dfrac{2}{(1+\tau(n))q^2_n}.
\end{align*}
\end{lem}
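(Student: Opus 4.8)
The plan is to reduce the inequality to the classical continued–fraction error identity and then invoke \lemref{inversecontinued}, applied crucially at index $n+2$ rather than at $n$ or $n+1$.

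\emph{Step 1: the error identity.} Put $\beta:=1/(\alpha_{(n+1)})_{\infty}$; since $\alpha$ is irrational the algorithm never terminates, so every $\alpha_{(k)}$ (hence every $(\alpha_{(k)})_{\infty}$) is nonzero and $\beta$ is well defined. From $(\alpha_{(k)})_{\infty}=\bigl(a_k(\alpha)+(\alpha_{(k+1)})_{\infty}\bigr)^{-1}$ for every $k$, unwinding gives $\alpha_{\infty}=[0;a_1(\alpha),\ldots,a_n(\alpha),\beta]$. Evaluating this finite continued fraction through the matrix product in Definition~\ref{convergents}, it equals $\frac{p_{n-1}+\beta p_n}{q_{n-1}+\beta q_n}$, while $p_{n-1}q_n-p_nq_{n-1}=(-1)^n$ because every $2\times 2$ factor has determinant $-1$. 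Subtracting $p_n/q_n$ and using $q_n\neq 0$,
\begin{align*}
\left|\alpha_{\infty}-\frac{p_n}{q_n}\right|=\frac{1}{q_n^{2}\left|\frac{q_{n-1}}{q_n}+\beta\right|}.
\end{align*}
Thus it suffices to prove $\left|\frac{q_{n-1}}{q_n}+\beta\right|>\frac12\bigl(1+\tau(n)\bigr)$.

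\emph{Step 2: rewriting the denominator and the index shift.} By \lemref{inversecontinued2}, $\frac{q_{n-1}}{q_n}=[0;a_n(\alpha),\ldots,a_1(\alpha)]$, and by Definition~\ref{algol2} together with $T_{2,\epsilon}(x)=\frac1x-a^{(\epsilon)}(x)$ one has $\beta=1/(\alpha_{(n+1)})_{\infty}=a_{n+1}(\alpha)+(\alpha_{(n+2)})_{\infty}$. Hence
\begin{align*}
\frac{q_{n-1}}{q_n}+\beta=(\alpha_{(n+2)})_{\infty}+a_{n+1}(\alpha)+[0;a_n(\alpha),\ldots,a_1(\alpha)]=(\alpha_{(n+2)})_{\infty}+[a_{n+1}(\alpha);a_n(\alpha),\ldots,a_1(\alpha)].
\end{align*}
Since $n\geq 1$ we have $n+2>1$, so \lemref{inversecontinued} applied at index $n+2$ gives $\bigl|(\alpha_{(n+2)})_{\infty}+[a_{n+1}(\alpha);a_n(\alpha),\ldots,a_1(\alpha)]\bigr|>\frac12\bigl(1+\tau(n+2)\bigr)$, and $\tau(n+2)=\tau(n)$. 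Substituting back into Step~1 yields $\bigl|\alpha_{\infty}-p_n/q_n\bigr|<2/\bigl((1+\tau(n))q_n^{2}\bigr)$.

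The whole argument turns on this index shift by two. Using \lemref{inversecontinued} at index $n+1$ would instead control $\bigl|(\alpha_{(n+1)})_{\infty}+[a_n(\alpha);\ldots,a_1(\alpha)]\bigr|$, which after clearing denominators bounds the error only in terms of $|q_n|\,|q_{n-1}|$ and fails to close; passing to index $n+2$, together with the relation $\beta=a_{n+1}(\alpha)+(\alpha_{(n+2)})_{\infty}$ and the parity identity $\tau(n+2)=\tau(n)$, simultaneously produces the denominator $q_n^{2}$ and the sharp constant $2/(1+\tau(n))$. I expect the only genuinely delicate point to be Step~1: the matrix convention of Definition~\ref{convergents} is mildly nonstandard, so one must check carefully that $\alpha_{\infty}=[0;a_1,\ldots,a_n,\beta]$ really evaluates to $(p_{n-1}+\beta p_n)/(q_{n-1}+\beta q_n)$ (the sign of $p_{n-1}q_n-p_nq_{n-1}$ is immaterial for the final absolute–value bound).
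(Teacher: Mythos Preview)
Your proof is correct and follows essentially the same route as the paper: the paper derives the identity $\alpha-\frac{p_n}{q_n}=\frac{(-1)^n}{q_n^2\bigl([a_{n+1}(\alpha);a_n(\alpha),\ldots,a_1(\alpha)]+\alpha_{(n+2)}\bigr)}$ via the same chain $\alpha=\frac{p_{n-1}\alpha_{(n+1)}+p_n}{q_{n-1}\alpha_{(n+1)}+q_n}$, $\frac{1}{\alpha_{(n+1)}}=a_{n+1}(\alpha)+\alpha_{(n+2)}$, and $\frac{q_{n-1}}{q_n}=[0;a_n(\alpha),\ldots,a_1(\alpha)]$, and then invokes \lemref{inversecontinued} at index $n+2$ exactly as you do. Your explicit mention of \lemref{inversecontinued2} and of the parity identity $\tau(n+2)=\tau(n)$ makes the argument slightly more transparent, but the substance is identical.
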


\begin{proof}
We have
\begin{align}
&\alpha-\dfrac{p_n}{q_n}=\dfrac{p_{n-1}\alpha_{(n+1)}+p_{n}}{q_{n-1}\alpha_{(n+1)}+q_{n}}-\dfrac{p_n}{q_n}=\dfrac{(-1)^n\alpha_{(n+1)}}{q_n(q_{n-1}\alpha_{(n+1)}+q_{n})}\nonumber\\
&=\dfrac{(-1)^n}{q^2_n\left(\dfrac{q_{n-1}}{q_n}+\dfrac{1}{\alpha_{(n+1)}}\right)}=\dfrac{(-1)^n}{q^2_n\left(\dfrac{q_{n-1}}{q_n}+a_{n+1}(\alpha)+\alpha_{(n+2)}\right)}\label{q2nleft}\\
&=\dfrac{(-1)^n}{q^2_n\left([a_{n+1}(\alpha);a_{n}(\alpha),\ldots,a_{1}(\alpha)]+\alpha_{(n+2)}\right)}.\nonumber
\end{align}
Therefore, from Lemma \ref{inversecontinued}, we have the claim of the lemma.
\end{proof}

\begin{lem}\label{boundf2}
Let $\alpha\in K/\mathbb{Q}$ with property $I$.
For  all $n\in \mathbb{Z}_{\geq 1}$, it holds that
\begin{align*}
v_2\left(\alpha_{\langle 2 \rangle}-\dfrac{p_n}{q_n}\right)=-2v_2(q_n)-v_2(a_{n+1}(\alpha)).
\end{align*}
\end{lem}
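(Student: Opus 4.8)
The plan is to re-run, $2$-adically, the algebraic identity already obtained in the proof of Lemma~\ref{boundf}. Since $\alpha\in K/\mathbb{Q}$, every term $\alpha_{(k)}$ produced by Algorithm (Definition~\ref{algol2}) is irrational (otherwise, tracing the recursion backwards, $\alpha$ would be rational), hence nonzero, and the denominators occurring in the manipulation (\ref{q2nleft}) do not vanish; thus (\ref{q2nleft}) is a genuine identity in $K$, namely
\begin{align*}
\alpha-\dfrac{p_n}{q_n}=\dfrac{(-1)^n}{q^2_n\left(\dfrac{q_{n-1}}{q_n}+a_{n+1}(\alpha)+\alpha_{(n+2)}\right)}.
\end{align*}
Applying the embedding $\sigma_2$ and taking $v_2$ of both sides, and using $v_2((-1)^n)=0$, the statement reduces to proving
\begin{align*}
v_2\left(\dfrac{q_{n-1}}{q_n}+a_{n+1}(\alpha)+\alpha_{(n+2)}\right)=v_2(a_{n+1}(\alpha)).
\end{align*}

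Next I would estimate the three summands one at a time. By Lemma~\ref{convergentsp2adic}, $v_2(q_{n-1}/q_n)=v_2(q_{n-1})-v_2(q_n)=-v_2(a_n(\alpha))$, and by Lemma~\ref{lemord2} (through $v_2(a_m(\alpha))=-v_2((\alpha_{(m)})_{\langle 2\rangle})$) this is $\geq 0$ if $n$ is even and $\geq 1$ if $n$ is odd. Using Lemma~\ref{lemord2} again, $v_2((\alpha_{(n+2)})_{\langle 2\rangle})$ is $\geq 0$ if $n$ is even and $\geq 1$ if $n$ is odd, whereas $v_2(a_{n+1}(\alpha))$ is $\leq -1$ if $n$ is even and $\leq 0$ if $n$ is odd. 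Comparing these in each parity, one obtains in both cases the strict inequalities
\begin{align*}
v_2\left(\dfrac{q_{n-1}}{q_n}\right)>v_2(a_{n+1}(\alpha)) \qquad\text{and}\qquad v_2\big((\alpha_{(n+2)})_{\langle 2\rangle}\big)>v_2(a_{n+1}(\alpha)),
\end{align*}
so the ultrametric inequality forces $v_2\big(q_{n-1}/q_n+a_{n+1}(\alpha)+\alpha_{(n+2)}\big)=v_2(a_{n+1}(\alpha))$, which is exactly what is required.

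This is essentially pure valuation bookkeeping, so I do not expect any real obstacle. The one mildly delicate point is the subcase $n$ odd with $v_2(a_{n+1}(\alpha))=0$, where $a_{n+1}(\alpha)$ does not \emph{strictly} dominate $q_{n-1}/q_n$; but there $q_{n-1}/q_n$ and $\alpha_{(n+2)}$ both have valuation $\geq 1$, so their sum does too, and adding a term of valuation exactly $0$ keeps the total at $0$. Arranging the parity analysis so that the two strict inequalities above come out uniformly is therefore the step to carry out with some care.
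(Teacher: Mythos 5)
Your proposal is correct and follows essentially the same route as the paper: take $v_2$ of the identity (\ref{q2nleft}) and show, via Lemma~\ref{convergentsp2adic}, Lemma~\ref{lemord2} and the ultrametric inequality in the two parity cases, that $v_2\bigl(q_{n-1}/q_n+a_{n+1}(\alpha)+(\alpha_{(n+2)})_{\langle 2\rangle}\bigr)=v_2(a_{n+1}(\alpha))$. The ``delicate'' subcase you flag ($n$ odd, $v_2(a_{n+1}(\alpha))=0$) is in fact already covered by your strict inequalities, since for odd $n$ one has $v_2(q_{n-1}/q_n)=-v_2(a_n(\alpha))\geq 1$.
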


\begin{proof}
From the equation (\ref{q2nleft}), we have
\begin{align}\label{ord2leftal}
v_2\left(\alpha_{\langle 2 \rangle}-\dfrac{p_n}{q_n}\right)=-2v_2(q_n)-v_2\left(\dfrac{q_{n-1}}{q_n}+a_{n+1}(\alpha)+(\alpha_{(n+2)})_{\langle 2 \rangle}\right).
\end{align}
From Lemma \ref{convergentsp2adic}, 
\begin{align}\label{qnright}
v_2\left(\dfrac{q_{n-1}}{q_n}\right)=-v_2(a_{n}(\alpha)).
\end{align}
First, we assume that $n$ is even.
From Lemma \ref{lemord2},  we see that $v_2(a_{n}(\alpha))\leq 0$, 
$v_2(a_{n+1}(\alpha))<0$, and  $v_2((\alpha_{(n+2)})_{\langle 2 \rangle})\geq 0$, which implies
\begin{align*}
v_2\left(\frac{q_{n-1}}{q_n}+a_{n+1}(\alpha)+(\alpha_{(n+2)})_{\langle 2 \rangle}\right)
=v_2(a_{n+1}(\alpha)).
\end{align*}
Thus, we get the claim of the lemma in this case.
Next, we assume that $n$ is odd.
From Lemma \ref{lemord2},  we see that $v_2(a_{n}(\alpha))<0$, 
$v_2(a_{n+1}(\alpha))\leq 0$, and  $v_2((\alpha_{(n+2)})_{\langle 2 \rangle})>0$, which implies
\begin{align*}
v_2\left(\frac{q_{n-1}}{q_n}+a_{n+1}(\alpha)+(\alpha_{(n+2)})_{\langle 2 \rangle}\right)
=v_2(a_{n+1}(\alpha)).
\end{align*}
Thus, we get the claim of the lemma.
\end{proof}

From now on, let $K$ be a quadratic field
that can be embedded into both $\mathbb{R}$ and $\mathbb{Q}_p$.
For $\alpha\in K/\mathbb{Q}$, we denote its conjugate by $\bar{\alpha}$.
We note that $\bar{\alpha}\in K$.

\begin{lem}\label{boundpf}
Let $\alpha\in K/\mathbb{Q}$ with property $I$.
There exists $C>0$ such that for  all $n\in \mathbb{Z}_{\geq 1}$, it holds that
$|a_{n}(\alpha)|_{2}<C$.
\end{lem}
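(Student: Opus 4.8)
The plan is to control the $2$-adic size of the partial quotients $a_n(\alpha)$ by relating them to the conjugate $\bar\alpha$, exactly as one controls the \emph{real} size in Lemma~\ref{inversecontinued} via the point $\infty$. The key observation is that the algorithm's recursion $\alpha_{(n+1)} = \tfrac{1}{\alpha_{(n)}} - a_n(\alpha)$ applies formally to $\bar\alpha$ as well, since conjugation is a field automorphism of $K$ and $a_n(\alpha)\in\mathbb{Q}$. Thus $\bar\alpha_{(n)}$ (the sequence obtained by conjugating each $\alpha_{(n)}$) satisfies the same recursion, and the pair $(\alpha,\bar\alpha)\in (K/\mathbb{Q})^2$ with $\alpha\ne\bar\alpha$ falls under the setting of Lemmas~\ref{Letalpha} and~\ref{Letalpha2adic}. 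From Lemma~\ref{Letalpha2adic} applied to $\mathbf{a}=(\alpha,\bar\alpha)$, there is an $n_0$ with $(\mathbf{a}_{(2n_0+1)})_{\langle 2\rangle}\in D_2^1$, and then by Lemma~\ref{Letalpha=} the $2$-adic orbit eventually alternates between $D_2^0$ and $D_2^1$; in particular $v_2((\bar\alpha_{(n)})_{\langle 2\rangle})$ is bounded below (by $0$ on even indices and $>0$ on odd indices) for all $n\ge n_0$.

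First I would set up the conjugate orbit and record that $a_n(\alpha) = \tfrac{1}{\alpha_{(n)}} - \alpha_{(n+1)}$ in $K$, hence $a_n(\alpha) = \tfrac{1}{\bar\alpha_{(n)}} - \bar\alpha_{(n+1)}$ as well (using $a_n(\alpha)\in\mathbb{Q}$, which is fixed by conjugation). Passing to the $2$-adic embedding: $(a_n(\alpha))_{\langle 2\rangle} = \tfrac{1}{(\bar\alpha_{(n)})_{\langle 2\rangle}} - (\bar\alpha_{(n+1)})_{\langle 2\rangle}$. Now for $n\ge n_0$, both $(\bar\alpha_{(n)})_{\langle 2\rangle}$ and $(\bar\alpha_{(n+1)})_{\langle 2\rangle}$ have valuation $\ge 0$, and moreover $\tfrac{1}{(\bar\alpha_{(n)})_{\langle 2\rangle}}$ has valuation $\le 0$ but one must check it is bounded below — this follows because on the eventual orbit $v_2((\bar\alpha_{(n)})_{\langle 2\rangle})$ is \emph{exactly} $0$ or $>0$ according to parity in the alternating pattern, and in fact from Lemma~\ref{Letalpha=}'s proof the valuations stabilize so that $v_2(\tfrac{1}{(\bar\alpha_{(n)})_{\langle 2\rangle}})$ is bounded; combining, $v_2((a_n(\alpha))_{\langle 2\rangle})$ is bounded below for $n\ge n_0$, i.e. $|a_n(\alpha)|_2$ is bounded above for all large $n$. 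Since there are only finitely many $n<n_0$, taking the maximum gives a uniform constant $C$.

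The main obstacle I anticipate is making precise that the valuation $v_2((\bar\alpha_{(n)})_{\langle 2\rangle})$ does not drift to $-\infty$, i.e. that on the eventual $D_2^\bullet$-orbit the valuations are genuinely \emph{stabilized} (not merely bounded on one side): from Lemma~\ref{Letalpha=}(1)–(2) one gets $v_2\ge 0$ or $>0$, and since $\hat T_{2,\epsilon}$ acts by $x\mapsto \tfrac1x - a^{(\epsilon)}(\alpha)$ with $a^{(\epsilon)}$'s valuation forced by that of $\tfrac1x$, one can read off that $v_2((\bar\alpha_{(2m)})_{\langle 2\rangle})=0$ and $v_2((\bar\alpha_{(2m+1)})_{\langle 2\rangle})>0$ with the latter equal to $-v_2(a^{(1)})$; a short induction along the lines of Lemma~\ref{lemord} or~\ref{lemord2}, but now for the \emph{second} coordinate once it has entered $D_2^1$, pins down $v_2$ on both sides and yields the two-sided bound on $|a_n(\alpha)|_2$. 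Everything else is bookkeeping: the finitely many initial terms, and the reduction from "$\bar\alpha_{(n)}$ satisfies the recursion" to "its $2$-adic valuations are eventually controlled" via the already-established Lemmas~\ref{Letalpha2adic} and~\ref{Letalpha=}.
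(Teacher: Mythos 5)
There is a genuine gap, and it sits exactly at the step you flag as "the main obstacle." Membership of $(\mathbf{a}_{(n)})_{\langle 2\rangle}$ in $D_2^{\tau(n)}$ constrains the \emph{second} coordinate only from above: by Definition \ref{domain2} the conjugate coordinate satisfies $v_2((\overline{\alpha_{(n)}})_{\langle 2\rangle})<0$ (even $n$) or $\leq 0$ (odd $n$); the lower bounds "$\geq 0$ on even indices, $>0$ on odd indices" that you cite are properties of the \emph{first} coordinate (Lemma \ref{lemord2}), not of $\overline{\alpha_{(n)}}$. So Lemmas \ref{Letalpha2adic} and \ref{Letalpha=} give no lower bound on $v_2((\overline{\alpha_{(n)}})_{\langle 2\rangle})$, and hence no upper bound on $v_2(1/(\overline{\alpha_{(n)}})_{\langle 2\rangle})$. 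Worse, the "short induction for the second coordinate" you propose only yields $v_2((\overline{\alpha_{(n+1)}})_{\langle 2\rangle})=v_2(a_n(\alpha))$ (this is precisely Lemma \ref{2adicalphainK} of the paper), so asserting that the conjugate's valuations "stabilize" or are bounded below is logically equivalent to the statement of Lemma \ref{boundpf} itself; the argument is circular. A purely $2$-adic tracking of the pair $(\alpha,\bar\alpha)$ cannot produce the bound.

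The missing idea is a global (archimedean plus $2$-adic) input. The paper evaluates the integral binary quadratic form $g(x,y)=ax^2+bxy+cy^2$ attached to the minimal polynomial of $\alpha$ at the convergents: the real estimates $|\alpha_\infty-p_n/q_n|<2/q_n^2$ (Lemma \ref{boundf}) and $|\alpha_\infty-p_n/q_n|<1$ (Lemma \ref{convergentsp2adicda}) bound $|g(p_n,q_n)|$ by a constant $C_1$; with $m_n=-v_2(q_n)$ one has $2^{2m_n}g(p_n,q_n)\in\mathbb{Z}\setminus\{0\}$, so $0\leq v_2(2^{2m_n}g(p_n,q_n))<2m_n+\log_2(C_1+1)$. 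On the other hand Lemma \ref{boundf2} gives exactly $v_2(2^{m_n}(p_n-q_n\alpha_{\langle 2\rangle}))=2m_n-v_2(a_{n+1}(\alpha))$, while $v_2(2^{m_n}(p_n-q_n\overline{\alpha}_{\langle 2\rangle}))=v_2(\overline{\alpha}_{\langle 2\rangle})$ is a constant (here is where Lemma \ref{Letalpha2adic} is really used, to arrange $v_2(\overline{\alpha}_{\langle 2\rangle})\leq 0$ after a shift of the starting index). Comparing the two expressions for $v_2(2^{2m_n}g(p_n,q_n))$ forces $-v_2(a_{n+1}(\alpha))$ to be bounded. Your proposal uses the conjugate, but only at the place $\mathbb{Q}_2$; without the archimedean control of $g(p_n,q_n)$ (or some equivalent product-formula-type constraint) the required lower bound on $v_2(a_n(\alpha))$ does not follow.
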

\begin{proof}
such that 
Let $ax^2+bx+c$ be a minimal polynomial of $\alpha$ over $\mathbb{Z}$.
We put $g(x,y)=ax^2+bxy+cy^2$.
Let $\mathbf{a}=(\alpha,\bar{\alpha})$.
From Lemma \ref{Letalpha2adic}, there exists $j\in \mathbb{Z}_{>0}$
such that $(\mathbf{a}_{(2j+1)})_{\langle 2 \rangle}\in D_{2}^1$.
For the sake of simplicity, we assume that 
$(\alpha_{\langle 2 \rangle},\bar{\alpha}_{\langle 2 \rangle})\in D_{2}^1$.
Let $n\in \mathbb{Z}_{>0}$.
Then, from Lemma \ref{convergentsp2adicda} and \ref{boundf}, we have
\begin{align}
0<|g(p_n,q_n)|&=|a||q_n^2|\left|\dfrac{p_n}{q_n}-\alpha_{\infty}\right|\left|\dfrac{p_n}{q_n}-\overline{\alpha}_{\infty}\right|\nonumber\\
&<2|a|\left(\left|\dfrac{p_n}{q_n}\right|+|\overline{\alpha}_{\infty}|\right)\nonumber\\
&<2|a|\left(1+|\alpha_{\infty}|+|\overline{\alpha}_{\infty}|\right)=C_1.\label{2left1+alpha}
\end{align}
We put $m_n=-v_2(q_n)$.
From Lemma \ref{convergentsp2adic} we see that $-v_2(q_n)\geq -v_2(p_n)$.
Therefore, we can conclude that $2^{2m_n}g(p_n,q_n)\in \mathbb{Z}$.
Hence, from inequality (\ref{2left1+alpha}), we can conclude:
\begin{align}\label{2m_ng(p_n,q_n)}
0\leq v_2(2^{2m_n}g(p_n,q_n))< 2m_n +\log_2(|C_1|+1). 
\end{align}
On the other hand, from Lemma \ref{boundf2}, we have
\begin{align}\label{2m_n-ord}
v_2(2^{m_n}(p_n-q_n\alpha_{\langle 2 \rangle}))=2m_n-v_2(a_{n+1}(\alpha)).
\end{align}  
Since $v_2(p_n)=-m_n-v_2(a_1(\alpha))>-m_n$ and $v_2(\overline{\alpha}_{\langle 2 \rangle})\leq 0$, 
we have 
\begin{align}\label{overlinealpha}
v_2(2^{m_n}(p_n-q_n\overline{\alpha}_{\langle 2 \rangle}))=v_2(\overline{\alpha}_{\langle 2 \rangle})=C_2.
\end{align}
Therefore, from (\ref{2m_n-ord}) and (\ref{overlinealpha}), we have
\begin{align}\label{alphaC_2}
&v_2(2^{2m_n}g(p_n,q_n))=v_2(2^{m_n}(p_n-q_n\alpha_{\langle 2 \rangle})2^{m_n}(p_n-q_n\overline{\alpha}_{\langle 2 \rangle}))\nonumber\\
&=2m_n-v_2(a_{n+1}(\alpha))+C_2.
\end{align}  
From (\ref{2m_ng(p_n,q_n)}) and (\ref{alphaC_2}), we have
\begin{align*}
-v_2(a_{n+1}(\alpha))<\log_2(|C_1|+1)-C_2,
\end{align*}
which prove the claim of the lemma.

\end{proof}

\begin{lem}\label{alphainKda}
Let $\alpha$ belong to $K/\mathbb{Q}$ with property $I$, and let $\beta$ be an element in $K/\mathbb{Q}$ distinct from $\alpha$.
Let $\mathbf{a}=(\alpha,\beta)$.
Then, there exists $n\in \mathbb{Z}_{>0}$ such that
$(\mathbf{a}_{(2n-1)})_{\infty}\in D_{\infty}^1$.
\end{lem}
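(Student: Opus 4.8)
The plan is to argue by contradiction, feeding $\mathbf{a}=(\alpha,\beta)$ into Lemma~\ref{Letalpha}. If the asserted $n$ does not exist, then (the negation of part (1) being exactly part (2)) we are in case (2) of Lemma~\ref{Letalpha}. I will treat subcase (2)(i), where $\lim_{m\to\infty}(\alpha_{(2m)})_\infty=\tfrac12$ and $\lim_{m\to\infty}(\alpha_{(2m+1)})_\infty=-1$ in $\mathbb{R}$, writing $\alpha_{(n)}$ for the first coordinate of $\mathbf{a}_{(n)}$; subcase (2)(ii) will be entirely parallel with the numbers $\tfrac12,1,3,-\tfrac32$ replaced by $-\tfrac12,1,-3,\tfrac32$. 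Note that since $\alpha\in K/\mathbb{Q}$ and every $a_n(\alpha)\in\mathbb{Q}$, each $\alpha_{(n)}$ lies in $K\setminus\mathbb{Q}$, so each $(\alpha_{(n)})_\infty$ is irrational and nonzero; in particular the orbit never meets $0$ or a rational point. The goal is to show the two limits above are impossible for a quadratic $\alpha$.

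First I would pin down the digits. Applying the embedding $\sigma_\infty$ to the recursion $\alpha_{(n+1)}=\tfrac{1}{\alpha_{(n)}}-a_n(\alpha)$ gives $(\alpha_{(n+1)})_\infty=\tfrac{1}{(\alpha_{(n)})_\infty}-a_n(\alpha)$, hence $a_{2m}(\alpha)=\tfrac{1}{(\alpha_{(2m)})_\infty}-(\alpha_{(2m+1)})_\infty\to 2-(-1)=3$ and likewise $a_{2m+1}(\alpha)\to \tfrac{1}{-1}-\tfrac12=-\tfrac32$. The crucial point is then to upgrade ``convergent digits'' to ``eventually constant digits'': by Lemma~\ref{boundpf} there is a constant $C$ with $|a_n(\alpha)|_2<C$ for all $n$, while the limits just computed bound $|a_n(\alpha)|$ for all large $n$; since a subset of $\mathbb{Z}[\tfrac12]$ bounded both in the archimedean and in the $2$-adic absolute value is finite, the sequences $(a_{2m}(\alpha))_m$ and $(a_{2m+1}(\alpha))_m$ eventually lie in a finite set and converge, so they are eventually constant. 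Thus there is $M$ with $a_n(\alpha)=3$ for all even $n\ge 2M$ and $a_n(\alpha)=-\tfrac32$ for all odd $n\ge 2M$. I expect this step --- the only place where the quadratic hypothesis on $\alpha$ genuinely intervenes, via Lemma~\ref{boundpf} --- to be the main point of the argument; the rest is bookkeeping with M\"obius maps.

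Finally I would derive the contradiction from the frozen digits. For even $n\ge 2M$ the two-step recursion reads $\alpha_{(n+2)}=g(\alpha_{(n)})$, where $g(z)=\bigl(\tfrac1z-3\bigr)^{-1}+\tfrac32=\tfrac{3-7z}{2-6z}$; applying $\sigma_\infty$ yields $(\alpha_{(2M+2k)})_\infty=g^{k}\bigl((\alpha_{(2M)})_\infty\bigr)$ for all $k\ge 0$. The fixed points of $g$ are the roots of $2z^2-3z+1$, namely $\tfrac12$ and $1$, and $g'(z)=4/(2-6z)^2$, so $g'(\tfrac12)=4>1$: the point $\tfrac12$ is a repelling fixed point of $g$. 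But $(\alpha_{(2M)})_\infty$ is irrational, hence distinct from $\tfrac12$, and all its $g$-iterates remain irrational and hence never equal $\tfrac12$; a forward orbit of the $C^1$ map $g$ that never lands on a repelling fixed point cannot converge to it. This contradicts $(\alpha_{(2m)})_\infty\to\tfrac12$. In subcase (2)(ii) the same computation gives the return map $z\mapsto\bigl(\tfrac1z+3\bigr)^{-1}-\tfrac32=\tfrac{-3-7z}{2+6z}$, whose fixed points are the roots of $2z^2+3z+1$, i.e.\ $-1$ and $-\tfrac12$, and the derivative formula gives a value $>1$ at $-\tfrac12$, so the identical argument contradicts $(\alpha_{(2m)})_\infty\to-\tfrac12$. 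Hence case (2) is impossible and the required $n$ exists.
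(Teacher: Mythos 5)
Your proposal is correct, and its core coincides with the paper's proof: both argue by contradiction through Lemma~\ref{Letalpha}(2), compute the limiting digits $a_{2m}(\alpha)\to 3$, $a_{2m+1}(\alpha)\to -\tfrac32$ (resp.\ $-3$, $\tfrac32$ in subcase (ii)), and then perform the decisive step exactly as the paper does, namely upgrading convergence of the digits to eventual constancy by combining the $2$-adic bound of Lemma~\ref{boundpf} with the fact that a subset of $\mathbb{Z}[\tfrac12]$ bounded in both absolute values is finite. Where you genuinely diverge is the final contradiction. The paper keeps working $2$-adically: with frozen digits the tail is periodic, the matrix $\begin{pmatrix}1&3\\-3/2&-7/2\end{pmatrix}$ is diagonalized, and Theorem~\ref{conver} forces $(\alpha_{(2N-1)})_{\langle 2\rangle}=-2$, a rational value, contradicting $\alpha\in K/\mathbb{Q}$. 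You instead stay in $\mathbb{R}$: the frozen-digit two-step return map $g(z)=\tfrac{3-7z}{2-6z}$ has $\tfrac12$ as a fixed point with $g'(\tfrac12)=4>1$, the orbit $(\alpha_{(2M+2k)})_\infty$ consists of irrational points and so never hits $\tfrac12$, and (by the standard mean-value expansion estimate near a repelling fixed point, the pole $\tfrac13$ being safely away) such an orbit cannot converge to $\tfrac12$, contradicting (2)(i); your computation for (2)(ii) is likewise correct. Your endgame is somewhat more elementary and self-contained (no appeal to Theorem~\ref{conver} or to the explicit eigenvalue computation), at the cost of a small dynamical lemma you should state with its one-line MVT justification; the paper's version avoids any dynamics and directly exhibits a rational value of $\alpha_{(2N-1)}$. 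One cosmetic slip: in subcase (2)(ii) the limit of the odd-indexed first coordinates is $-1$, not $1$, but this does not affect your argument since you use the correct return map, fixed points $-1,-\tfrac12$, and derivative there.
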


\begin{proof}
We put $(u_n.v_n)=(\mathbf{a}_{(n)})_{\infty}$ for $n\in \mathbb{Z}_{>0}$.
We assume that for all $n\in \mathbb{Z}_{>0}$, $(u_{2n-1},v_{2n-1})\notin D_{\infty}^1$.
Then,  according to Lemma \ref{Letalpha}, either (2)(i) or (2)(ii) in the lemma holds.
We assume that (2)(i) holds: 
\begin{align}\label{lim_nto2}
\lim_{n\to \infty} (u_{2n},v_{2n})=\left(\frac{1}{2},1\right) \text{  and  }
\lim_{n\to \infty} (u_{2n-1},v_{2n-1})=(-1,-2).
\end{align}  

From the fact that $u_{2n+1}=\dfrac{1}{u_{2n}}-a_{2n}(\alpha)$ for $n\in \mathbb{Z}_{>0}$,
we have
\begin{align}\label{lim_nto}
\lim_{n\to \infty} a_{2n}(\alpha)=3.
\end{align}
Since $a_{2n}(\alpha)\in \mathbb{Z}[\frac{1}{p}]$ for $n\in \mathbb{Z}_{>0}$ and from Lemma \ref{boundpf} there
exists $C>0$ such that $|a_{n}(\alpha)|_{2}<C$, 
with equation (\ref{lim_nto}), we can conclude that 
there exists $N_1\in \mathbb{Z}_{>0}$  
such that for all $n\in \mathbb{Z}_{>0}$ with $n\geq N_1$ $a_{2n}(\alpha)=3$.
Similarly, we have
there exists $N_2\in \mathbb{Z}_{>0}$  
such that for all $n\in \mathbb{Z}_{>0}$ with $n\geq N_2$ $a_{2n-1}(\alpha)=-\frac{3}{2}$.
We set $N=\max\{N_1,N_2\}$.
Then, we have for $n\in \mathbb{Z}_{\geq 0}$
\begin{align*}
&\begin{pmatrix}0&1\\1&a_{2N-1}(\alpha)\end{pmatrix}\ldots \begin{pmatrix}0&1\\1&a_{2(N+n-1)}(\alpha)\end{pmatrix}=
\begin{pmatrix}1&3\\-3/2&-7/2\end{pmatrix}^{n}\\
&=\begin{pmatrix}2(-1/2)^{n}-(-2)^{n}&2(-1/2)^{n}-2(-2)^{n}\\-(-1/2)^{n}+(-2)^{n}&-(-1/2)^{n}+2(-2)^{n}\end{pmatrix}.
\end{align*}
Therefore, from Theorem \ref{conver}, we obtain that in $2$-adic topology 
\begin{align*}
(\alpha_{(2N-1)})_{\langle 2 \rangle}=\lim_{n\to \infty}\dfrac{2(-1/2)^{n}-2(-2)^{n}}{-(-1/2)^{n}+2(-2)^{n}}=-2,
\end{align*}
which contradicts $\alpha\in K/\mathbb{Q}$.
We have a contradiction in the case that (2)(ii) holds in a similar manner.
\end{proof}

\begin{rem}\label{evenperiod}
When discussing the continued fraction expansion by our algorithm, we assume that the length of its period is limited to even numbers. 
The reason is that there is a difference in the expressions used to calculate the next term between even and odd indices.
We give an example.
Let $\alpha=2\sqrt{17}-8$. Then, $\alpha_{\infty}=0.246\cdots$.
We assume that $\alpha_{\langle 2 \rangle} \equiv 2 \mod 8$.
Then, we have 
\begin{align*}
\alpha=\left[0;\dfrac{9}{2},-\dfrac{3}{2},-\dfrac{3}{2},\overline{5,-\dfrac{5}{2}}\right].
\end{align*}
\end{rem}
As can be seen from the above, it is evident that $\alpha_{(4)}=\alpha_{(6)}$,
 thus resulting in a period length of $2$.
However, we have $\alpha_{(2)}=\alpha_{(5)}$, which is not reflected in the continued fraction expansion.

\begin{thm}\label{Galois}
Let $\alpha\in K/\mathbb{Q}$ have property $I$ and assume that
there exist a positive odd integer $n_1$ such that 
$\alpha_{(1)}=\alpha_{(n_1)}$.
Then, $(\alpha,\bar{\alpha})_{\langle 2 \rangle}\in D_{2}^1$ and  $(\alpha,\bar{\alpha})_{\infty}\in D_{\infty}^1$
hold. 
\end{thm}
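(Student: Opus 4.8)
The plan is to exploit the purely periodic hypothesis $\alpha_{(1)}=\alpha_{(n_1)}$ with $n_1$ odd, together with the orbit-expansiveness lemmas (Lemma \ref{Letalpha2adic} and Lemma \ref{alphainKda}) applied to the pair $\mathbf{a}=(\alpha,\bar\alpha)$. The key observation is that the skew product $\hat T_{2,\epsilon}$ acts on the first coordinate exactly as $T_{2,\epsilon}$ and on the second coordinate by the \emph{same} integer $a^{(\epsilon)}(\alpha_{(n)})$; hence $\bar\alpha$, being the Galois conjugate of $\alpha$, runs through exactly the conjugate orbit, i.e. the second coordinate of $\mathbf{a}_{(n)}$ is $\overline{\alpha_{(n)}}$ for every $n$ in the range where the algorithm is defined. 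Since $\alpha$ is a quadratic irrational, $\alpha\ne\bar\alpha$, so the hypotheses of Lemmas \ref{Letalpha2adic} and \ref{alphainKda} are met.

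First I would verify by induction that $\mathbf{a}_{(n)}=(\alpha_{(n)},\overline{\alpha_{(n)}})$ for $n=1,\dots,n_1$, using that conjugation is a field automorphism commuting with inversion and subtraction of the (rational) integer $a_n(\alpha)$. Next, from $\alpha_{(1)}=\alpha_{(n_1)}$ and $n_1$ odd, I would argue that the \emph{whole} sequence $\{\alpha_{(n)}\}$ is purely periodic with period $n_1-1$ (an even number, consistent with Remark \ref{evenperiod}), because the parity of the index controls which map $T_{2,\epsilon}$ is applied; hence $\mathbf{a}_{(n)}=(\alpha_{(n)},\overline{\alpha_{(n)}})$ for \emph{all} $n\in\mathbb Z_{>0}$ and the second coordinate is never $\infty$, so the skew-product orbit is genuinely periodic in both coordinates. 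Then I would invoke Lemma \ref{Letalpha2adic}: there is some $n$ with $(\mathbf{a}_{(2n+1)})_{\langle 2\rangle}\in D_2^1$; by periodicity this recurs, and in particular pulling back through one full period (which has even length, so preserves the parity bookkeeping) and using Lemma \ref{Letalpha=} to transport the $D_2^0/D_2^1$ membership backwards along $\hat T_{2,\epsilon}$, I can descend to the index $1$ and conclude $(\alpha,\bar\alpha)_{\langle 2\rangle}\in D_2^1$. The same strategy with Lemma \ref{alphainKda} and Lemma \ref{Dinfty} (transporting $D_\infty^1$ membership backward, using Remark \ref{infinity} to accommodate a possible $\infty$) yields $(\alpha,\bar\alpha)_\infty\in D_\infty^1$.

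The delicate point — and the main obstacle — is the backward transport of membership in the domains $D_2^\epsilon$ and $D_\infty^\epsilon$. Lemmas \ref{Letalpha=} and \ref{Dinfty} are stated as forward implications ($\hat T$ sends $D^0$ to $D^1$ and $D^1$ to $D^0$), so to run the argument backwards I need that, at every step of one period, if $\mathbf{a}_{(n+1)}$ lies in the appropriate $D^{\tau(n+1)}$ then $\mathbf{a}_{(n)}$ cannot avoid $D^{\tau(n)}$. This is exactly the contrapositive structure used in the proof of Lemma \ref{convergentsp2adicda} and in Lemma \ref{Letalpha2adic}: ``$\mathbf{a}_{(n)}\notin D^{\tau(n)}$ for all $n$'' forces a contradiction. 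So the clean way is not literal backward transport but rather: \emph{assume} $(\alpha,\bar\alpha)_{\langle 2\rangle}\notin D_2^1$; by pure periodicity this makes $(\mathbf{a}_{(2n-1)})_{\langle 2\rangle}\notin D_2^1$ for infinitely many $n$, and since the sequence is periodic it holds for \emph{all} $n$, directly contradicting Lemma \ref{Letalpha2adic}. Likewise $(\alpha,\bar\alpha)_\infty\notin D_\infty^1$ together with periodicity contradicts Lemma \ref{alphainKda}. I would organize the final write-up around these two clean contradictions, with the inductive identification $\mathbf{a}_{(n)}=(\alpha_{(n)},\overline{\alpha_{(n)}})$ and the pure periodicity of $\{\alpha_{(n)}\}$ established first as the two preliminary claims.

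One technical wrinkle to handle carefully is ensuring the algorithm never terminates on the orbit of $(\alpha,\bar\alpha)$: since $\alpha\in K/\mathbb Q$ is a quadratic irrational, no $\alpha_{(n)}$ equals $0$, and because $\bar\alpha\ne\alpha$ one checks inductively that $\overline{\alpha_{(n)}}\ne 0$ as well (if it were, then $\alpha_{(n)}$ and its conjugate would be $0$ resp.\ nonzero, contradicting that conjugation fixes $\mathbb Q$ and that $\alpha_{(n)}\notin\mathbb Q$), so $\mathbf{a}_{(n)}\in(K^\times)^2$ throughout and Algorithm (Definition \ref{algol3}) runs forever on this input — which is what licenses applying Lemmas \ref{Letalpha2adic} and \ref{alphainKda}.
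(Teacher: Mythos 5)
Your strategy coincides with the paper's: run Algorithm (Definition \ref{algol3}) on $\mathbf{a}=(\alpha,\bar\alpha)$, observe that $\mathbf{a}_{(n)}=(\alpha_{(n)},\overline{\alpha_{(n)}})$, deduce from $\alpha_{(1)}=\alpha_{(n_1)}$ with $n_1$ odd that the orbit is purely periodic with even period $n_1-1$, and combine Lemmas \ref{Letalpha2adic} and \ref{alphainKda} with the invariance Lemmas \ref{Letalpha=} and \ref{Dinfty}. The preliminary claims (conjugate second coordinate, pure periodicity, non-termination of the algorithm on this input) are all correctly handled.

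However, the final reduction as you phrase it has a gap. Assuming $(\alpha,\bar\alpha)_{\langle 2\rangle}\notin D_{2}^{1}$, pure periodicity only yields $(\mathbf{a}_{(m)})_{\langle 2\rangle}\notin D_{2}^{1}$ at the odd indices $m\equiv 1\pmod{n_1-1}$; your sentence ``since the sequence is periodic it holds for all $n$'' does not follow, because other odd indices inside one period could a priori lie in $D_{2}^{1}$, and then Lemma \ref{Letalpha2adic} (a pure existence statement) is not contradicted. The missing step is precisely the forward invariance you were trying to avoid running ``backwards'': if $(\mathbf{a}_{(N)})_{\langle 2\rangle}\in D_{2}^{1}$ for some odd $N$ (Lemma \ref{Letalpha2adic}), then by Lemma \ref{Letalpha=} the orbit lies in $D_{2}^{\tau(m)}$ for every $m\geq N$, in particular in $D_{2}^{1}$ at every odd $m\geq N$; choosing such an $m$ with $m\equiv 1\pmod{n_1-1}$ and using $\mathbf{a}_{(m)}=\mathbf{a}_{(1)}$ gives the claim directly --- no backward transport and no contradiction argument are needed, and this is exactly how the paper argues. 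The same one-line repair settles the archimedean half via Lemmas \ref{alphainKda} and \ref{Dinfty} (Remark \ref{infinity} is not needed here, since the second coordinate is $\bar\alpha\ne\infty$).
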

\begin{proof}
Let $\mathbf{a}=(\alpha,\bar{\alpha})$.
We apply Algorithm (Definition \ref{algol3}) to $\mathbf{a}$.
From Lemma \ref{Letalpha2adic} and \ref{alphainKda}, there exist odd positive integers $N_1, N_2$
such that $(\mathbf{a}_{(N_1)})_{\langle 2 \rangle}\in D_{2}^1$ and 
$(\mathbf{a}_{(N_2)})_{\infty}\in D_{\infty}^1$.
Using Lemma \ref{Letalpha=} and \ref{Dinfty}, we see that
for all odd  $n\in \mathbb{Z}_{>0}$ with $n\geq \max\{N_1,N_2\}$ 
$(\mathbf{a}_{(n)})_{\langle 2 \rangle}\in D_{2}^1$ and 
$(\mathbf{a}_{(n)})_{\infty}\in D_{\infty}^1$.
Therefore, the assumption that $\alpha_{(1)}=\alpha_{(n_1)}$ implies the claim of the theorem. 
\end{proof}

\begin{rem}\label{Galois2}
The converse of Theorem \ref{Galois} is generally not true.
We give an example.
We assume that $(\sqrt{17})_{\langle 2 \rangle} \equiv 1 \mod 8$.
Let $\alpha=-\frac{49}{64}+\frac{9\sqrt{17}}{64}$.
Then, $\alpha_{\infty}=-0.1858\cdots$.
$\bar{\alpha}_{\infty}=-1.3454\cdots$. 
We obtain that
$v_2(\alpha_{\langle 2 \rangle})=3$ and $v_2(\bar{\alpha}_{\langle 2 \rangle})=-5$.
Therefore, it holds that 
$(\alpha,\bar{\alpha})_{\langle 2 \rangle}\in D_{2}^1$ and  $(\alpha,\bar{\alpha})_{\infty}\in D_{\infty}^1$.
However, we have
\begin{align*}
\alpha=\left[0;-\dfrac{41}{8},\overline{-3,-\dfrac{3}{2},\dfrac{5}{2},\dfrac{37}{4},-3,\dfrac{5}{4},-5,-\dfrac{33}{8}}\right].
\end{align*}
\end{rem}

\begin{thm}\label{inftyp_nq_n}
Let $\alpha\in K/\mathbb{Q}$ have property $I$.
Then, 
\begin{align*}
\alpha_{\infty}=\lim_{n\to \infty}\dfrac{p_n}{q_n}.
\end{align*}
\end{thm}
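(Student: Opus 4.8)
The plan is to reduce the whole statement to the assertion that $|q_n|\to\infty$ in $\mathbb{R}$: once that is known, Lemma~\ref{boundf} gives $|\alpha_\infty-p_n/q_n|<2/((1+\tau(n))q_n^2)\to 0$. Write the minimal polynomial of $\alpha$ over $\mathbb{Z}$ as $at^2+bt+c$ and set $g(x,y)=ax^2+bxy+cy^2$; since $\alpha_\infty\in\mathbb{R}$ and $\alpha\notin\mathbb{Q}$ this form is irreducible and indefinite, and $g(p_n,q_n)=a(p_n-\alpha_\infty q_n)(p_n-\bar\alpha_\infty q_n)\ne 0$ for all $n$. First I would show that the values $g(p_n,q_n)$ lie in a finite set. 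This is exactly the computation in the proof of Lemma~\ref{boundpf}: after reducing (by replacing $\alpha$ by some $\alpha_{(2j+1)}$, which only twists $g$ by a fixed substitution) to the case $(\alpha_{\langle 2\rangle},\bar\alpha_{\langle 2\rangle})\in D_2^1$, Lemmas~\ref{convergentsp2adic} and \ref{boundf2} give $v_2(g(p_n,q_n))=v_2(\bar\alpha_{\langle 2\rangle})-v_2(a_{n+1}(\alpha))$, which is bounded because Lemma~\ref{boundpf} bounds $v_2(a_{n+1}(\alpha))$ from below, while $|g(p_n,q_n)|<2|a|(1+|\alpha_\infty|+|\bar\alpha_\infty|)$ by Lemmas~\ref{convergentsp2adicda} and \ref{boundf}. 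A nonzero element of $2^{-B}\mathbb{Z}$ of bounded absolute value takes only finitely many values; and by Lemma~\ref{boundpf} the $a_n(\alpha)$ lie in $2^{-B}\mathbb{Z}$ for a fixed $B$.

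Next I would prove that the orbit $\{\alpha_{(n)}\}$ is finite, hence eventually periodic. From Definition~\ref{convergents} one gets by induction $\alpha_{(n)}=(p_{n-1}-\alpha q_{n-1})/(\alpha q_{n-2}-p_{n-2})$, and rationalizing the denominator with $\bar\alpha q_{n-2}-p_{n-2}$ yields
\[
\alpha_{(n)}=\frac{(-1)^{n+1}a\alpha - a p_{n-1}p_{n-2}-b p_{n-1}q_{n-2}-c q_{n-1}q_{n-2}}{g(p_{n-2},q_{n-2})},
\]
together with $\alpha_{(n)}\overline{\alpha_{(n)}}=g(p_{n-1},q_{n-1})/g(p_{n-2},q_{n-2})$ and $\alpha_{(n)}-\overline{\alpha_{(n)}}=(-1)^{n+1}a(\alpha-\bar\alpha)/g(p_{n-2},q_{n-2})$. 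By the previous step the last two right-hand sides are bounded above and below in absolute value by positive constants, and $|\alpha_{(n)\infty}|\le 1$ by Lemma~\ref{alphainK}, so $|\overline{\alpha_{(n)}}_\infty|$ is bounded above and below; hence $\alpha_{(n)}$ has bounded trace and norm. Since $a\alpha\in\mathcal O_K$ and $p_i,q_i\in\mathbb{Z}[1/2]$, the numerator above is integral at every odd prime, and as $g(p_{n-2},q_{n-2})$ runs through a fixed finite set, together with $v_2((\alpha_{(n)})_{\langle 2\rangle})\ge 0$ (Lemma~\ref{lemord2}) one obtains a fixed integer $d$ with $d\,\alpha_{(n)}\in\mathcal O_K$ for all large $n$. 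An element of $\tfrac1d\mathcal O_K$ with bounded trace and norm lies in a finite set, so $\{\alpha_{(n)}:n\ge N_0\}$ is finite; since $\alpha_{(n+1)}$ depends only on $\alpha_{(n)}$ and the parity of $n$, choosing $n<m$ of equal parity with $\alpha_{(n)}=\alpha_{(m)}$ makes $(\alpha_{(n)})$ eventually periodic with an even period $L$, say from index $N_1$ on.

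Finally, let $W\in SL_2(\mathbb{Q})$ be the product over one period of the matrices $M_k=\left(\begin{smallmatrix}0&1\\1&a_k(\alpha)\end{smallmatrix}\right)$, so $\det W=(-1)^L=1$; periodicity forces $\alpha_{(N_1)\infty}$ to be a fixed point of the Möbius action of $W$, so $W$ cannot be parabolic or elliptic (its fixed points would be rational, resp. non-real, contradicting $\alpha_{(N_1)}\in K/\mathbb{Q}$), i.e. $W$ is hyperbolic with eigenvalues $\lambda,\lambda^{-1}$, $|\lambda|>1$. For each residue $r$, the matrix $\left(\begin{smallmatrix}p_{N_1+r-1}&p_{N_1+r}\\q_{N_1+r-1}&q_{N_1+r}\end{smallmatrix}\right)W_r^{\,j}$, where $W_r$ is the corresponding cyclic conjugate of $W$, computes the convergents of indices $N_1+r+jL-1$ and $N_1+r+jL$, so $q_{N_1+r+jL}=c_1\lambda^{j}+c_2\lambda^{-j}$. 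If $c_1\ne 0$ this tends to $\infty$; if $c_1=0$, invertibility of the matrix forces the numerator row not to be contracting, so $p_{N_1+r+jL}/q_{N_1+r+jL}$ is either unbounded in $j$ — impossible by $|p_n/q_n|<1+|\alpha_\infty|$ (Lemma~\ref{convergentsp2adicda}) — or eventually constant, impossible because $p_n/q_n\to\alpha_{\langle 2\rangle}$ strictly in $\mathbb{Q}_2$ with $\alpha_{\langle 2\rangle}$ irrational (Theorem~\ref{conver}). Hence $|q_n|\to\infty$, and the theorem follows from Lemma~\ref{boundf}.

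I expect the combination of the last two steps to be the main obstacle: making the orbit recur is already a Lagrange-type statement, and then one must show that this recurrence forces geometric growth of $|q_n|$ rather than collapse — the delicate point being the exclusion of the contracting eigendirection, which genuinely uses both the real boundedness of $p_n/q_n$ and the $2$-adic convergence of $p_n/q_n$ to the irrational number $\alpha_{\langle 2\rangle}$.
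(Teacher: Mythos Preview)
Your approach is correct in outline but takes a genuinely different route from the paper. The paper argues by contradiction: assuming $p_n/q_n\not\to\alpha_\infty$, it uses the telescoping identity $|u_1-v_{1,n}|=\prod_k|u_kv_{k,n}|\cdot|u_{n+1}-v_{n+1,n}|$ (with $u_m=(\alpha_{(m)})_\infty$ and $v_{m,n}=[0;a_m,\dots,a_n]$) together with Corollary~\ref{convergentsp2adicdaco} to force $|u_{2n}|\to\tfrac12$ and $|u_{2n+1}|\to1$; a sign-case analysis plus Lemma~\ref{boundpf} then pins down $a_n(\alpha)$ to an eventually constant sequence (e.g.\ $3,-\tfrac32,3,-\tfrac32,\dots$), which makes $\alpha$ rational. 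This is a direct dynamical argument that never invokes periodicity.

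You instead prove a Lagrange-type statement first (your Steps~1--2 are essentially Theorem~\ref{Lagrange}, reached via the finiteness of $g(p_n,q_n)$ rather than via the paper's bounding of the minimal-polynomial coefficients of $\alpha_{(n)}$), and then read off $|q_n|\to\infty$ from the hyperbolicity of the period matrix $W$ and conclude by Lemma~\ref{boundf}. This is logically sound and involves no circularity: none of Lemmas~\ref{boundpf}, \ref{convergentsp2adicda}, \ref{boundf2}, \ref{Letalpha2adic}, \ref{2adicalphainK} depend on Theorem~\ref{inftyp_nq_n}. Two small points deserve one extra line each. First, to get $d\,\alpha_{(n)}\in\mathcal O_K$ you need control at $2$ on \emph{both} embeddings; the cleanest fix is to note that $\alpha_{(n)}-\overline{\alpha_{(n)}}=\pm a(\alpha-\bar\alpha)/g(p_{n-2},q_{n-2})$ and $\alpha_{(n)}\overline{\alpha_{(n)}}=g(p_{n-1},q_{n-1})/g(p_{n-2},q_{n-2})$ already take finitely many values, so the pair (norm, difference) --- hence $\alpha_{(n)}$ itself --- does too, bypassing the $v_2(\overline{\alpha_{(n)}})$ issue. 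Second, your fixed-point argument rules out $W$ parabolic or elliptic but not $W=\pm I$; this case is excluded because $v_2(q_{N_1+L})<v_2(q_{N_1})$ by Lemma~\ref{convergentsp2adic}, so $q_{N_1+L}\ne\pm q_{N_1}$.

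Compared with the paper, your argument is heavier (it front-loads the Lagrange theorem) but more structural: periodicity and real convergence come out as a single package. The paper's proof is shorter and purely analytic, at the cost of a somewhat delicate sign-by-sign case analysis; it then proves $|q_n|\to\infty$ (Theorem~\ref{ntoinfty|q_n|infty}) and Lagrange (Theorem~\ref{Lagrange}) separately afterwards.
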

\begin{proof}
We assume that $\alpha_{\infty}\ne \lim_{n\to \infty}\frac{p_n}{q_n}$.
For $n,m\in \mathbb{Z}_{>0}$ with $m\leq n$,
we denote $[0;a_m(\alpha),a_{m+1}(\alpha),\ldots,a_{n}(\alpha)]$ by $v_{m,n}$.
We define $v_{n+1,n}=0$.
Let $n\in \mathbb{Z}_{> 0}$.
Let $\mathbf{a}^{(n)}=(\alpha,\frac{p_n}{q_n})$.
From Corollary \ref{convergentsp2adicdaco}, for $1\leq m \leq n+1$, we have:
\begin{align*}
|(\alpha_{(m)})_{\infty}-v_{m,n}|\leq \dfrac{1}{2}(1+\tau(m)).
\end{align*}
We set that for $m\in \mathbb{Z}_{>0}$ $u_m=(\alpha_{(m)})_{\infty}$.
We obtain the following inequality in a similar manner as inequality (\ref{u0v0}):
for $n\in \mathbb{Z}_{>0}$, 
\begin{align}\label{u0v00}
|u_1-v_{1,n}|=|u_1v_{1,n}|\ldots |u_nv_{n,n}||u_{n+1}-v_{n+1,n}|\leq |u_1v_{1,n}|\ldots |u_nv_{n,n}|.
\end{align}
If we assume that
there exists $0<\epsilon<1$ such that for infinitely many $j\in \mathbb{Z}_{>0}$
$|u_{2j}|<\dfrac{1}{2}\epsilon$,  a similar argument to Lemma \ref{Letalpha} implies 
that $\lim_{n\to \infty} |u_1-v_{1,n}|=0$, which contradicts the assumption.
Therefore, we have
\begin{align}\label{limu2n2n}
\lim_{n\to \infty}|u_{2n}|=\frac{1}{2}.
\end{align}
Similarly, we have
\begin{align}\label{limu2n+12n+1}
\lim_{n\to \infty}|u_{2n+1}|=1.
\end{align}
We set
\begin{align*}
&A_{+}=\{n\in \mathbb{Z}_{>0}\mid n \equiv 0 \mod 2, u_{n}\geq 0\},\\
&A_{-}=\{n\in \mathbb{Z}_{>0}\mid n \equiv 0 \mod 2, u_{n}< 0\},\\
&B_{+}=\{n\in \mathbb{Z}_{>0}\mid n \equiv 1 \mod 2, u_{n}\geq 0\},\\
&B_{-}=\{n\in \mathbb{Z}_{>0}\mid n \equiv 1 \mod 2, u_{n}< 0\}.
\end{align*}
We assume that $\sharp A_{+}=\infty$.
We also assume that there exist infinitely many $j\in \mathbb{Z}_{>0}$
such that $2j\in A_{+}$ and $2j+1\in B_{+}$. 
Then, we have
\begin{align*}
\lim_{{\substack{j\to \infty \\ 2j\in A_+,2j+1\in B_+ }}}a_{2j}(\alpha)
=\lim_{{\substack{j\to \infty \\ 2j\in A_+,2j+1\in B_+ }}}\left(\dfrac{1}{u_{2j}}-u_{2j+1}\right)=1.
\end{align*}
Using the same method as the proof of Lemma \ref{alphainKda}, we  can conclude that
there exists an $N_1 \in \mathbb{Z}_{>0}$ such that for all $n \in \mathbb{Z}_{>0}$ 
with $n \geq N_1$, if $2n \in A_+$ and $2n+1 \in B_+$, then $a_{2n} = 1$.
Let  $k \in \mathbb{Z}_{>0}$ satisfy  $k \geq N_1$,  $2k \in A_+$, and $2k+1 \in B_+$.
Then, $u_{2k}$ and $u_{2k+1}$ satisfy following:
\begin{align*}
&u_{2k}\geq 0, \ \ u_{2k+1}\geq 0,\\
&|u_{2k}|\leq \dfrac{1}{2}, \ \ |u_{2k+1}|\leq 1,\\
&u_{2k+1}=\dfrac{1}{u_{2k}}-1,
\end{align*} 
which imply that $u_{2k}=\frac{1}{2}$ and $u_{2k+1}=1$.
This contradicts that $\alpha\in K/\mathbb{Q}$.
Therefore, there exists an $N_2 \in \mathbb{Z}_{>0}$ such that for all $n \in \mathbb{Z}_{>0}$ 
with $n \geq N_2$, if $2n \in A_+$, then $2n+1 \in B_-$.
Next, we  assume that there exist infinitely many $j\in \mathbb{Z}_{>0}$
such that $2j\in A_{+}$ and $2j+2\in A_{-}$. 
Then, using the same method as in the previous argument, we arrive at  a contradiction.
Therefore, there exists an $N_3 \in \mathbb{Z}_{>0}$ such that for all $n \in \mathbb{Z}_{>0}$ with
$n\geq N_3$, $2n \in A_+$ and $2n+1 \in B_-$.
Then, employing the same method as the proof of Lemma \ref{alphainKda},
we can deduce that there exists an $N_4 \in \mathbb{Z}_{>0}$ such that for all $n \in \mathbb{Z}_{>0}$ with
$n\geq N_4$,
$a_{2n}(\alpha)=3$ and
$a_{2n-1}(\alpha)=-\frac{3}{2}$,  leading to a similar contradiction.
For the case where $\sharp A_{-}=\infty$, we encounter a similar contradiction.
Thus, we obtain the claim of the lemma.
\end{proof}

The following corollary immediately follows from Theorem \ref{rational} and \ref{inftyp_nq_n}.

\begin{cor}\label{inftyp_nq_n2}
Let $\alpha\in K$ and $\{\frac{p_n}{q_n}\}$ be its convergents related to Algorithm (Definition \ref{algol1}).
Then, 
\begin{align*}
\alpha_{\infty}=\lim_{n\to \infty}\dfrac{p_n}{q_n}.
\end{align*}
\end{cor}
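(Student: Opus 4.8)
The plan is to reduce the statement to the two results it names, treating $\alpha\in\mathbb{Q}$ and $\alpha\in K/\mathbb{Q}$ separately, and in the second case repeating verbatim the reduction already carried out in the proof of Corollary~\ref{inftyp_nq_ncor}, but invoking the real statement Theorem~\ref{inftyp_nq_n} in place of the $p$-adic one.

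For $\alpha\in\mathbb{Q}$: by Theorem~\ref{rational} the sequence $\{\alpha_n\}$ is finite, and the only way the algorithm of Definition~\ref{algol1} halts is that $\alpha_N-b_N(\alpha)=0$ for the last index $N$ (this being the sole obstruction to $F_{p,0}$ or $F_{p,1}$ being defined). Unwinding $\alpha_{k+1}=1/(\alpha_k-b_k(\alpha))$ from $k=N$ down to $k=0$ then gives $\alpha=[b_0(\alpha);b_1(\alpha),\ldots,b_N(\alpha)]$, so by Definition~\ref{convergents1} one has $p_N/q_N=\alpha$ in $K$ and hence $(p_N/q_N)_\infty=\alpha_\infty$; here the list of convergents is finite and its terminal value already realises the asserted limit.

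For $\alpha\in K/\mathbb{Q}$: put $\beta:=\alpha-b_0(\alpha)$. Since $b_0(\alpha)=b^{(0)}(\alpha)=mp+\lfloor\alpha_{\langle p\rangle}\rfloor_p$ is a rational number, $\beta\in K/\mathbb{Q}$; by Lemma~\ref{ineq1} together with the remark following Definition~\ref{transformationT}, $\beta_\infty\in(-\frac{p}{2},\frac{p}{2}]$ and $v_p(\beta_{\langle p\rangle})>0$, namely $\beta$ has property $I$. Applying Theorem~\ref{inftyp_nq_n} to $\beta$ yields $\beta_\infty=\lim_{n\to\infty}p'_n/q'_n$, where $p'_n/q'_n$ denotes the $n$-th convergent of $\beta$ produced by Algorithm (Definition~\ref{algol2}). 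Finally I would invoke the identity $p_n/q_n=b_0(\alpha)+p'_n/q'_n$ for $n>0$, already established inside the proof of Corollary~\ref{inftyp_nq_ncor}, apply $\sigma_\infty$, and let $n\to\infty$ to conclude $\lim_{n\to\infty}(p_n/q_n)_\infty=(b_0(\alpha))_\infty+\beta_\infty=\alpha_\infty$.

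I do not expect a genuine obstacle: both inputs are in hand and the remaining work is bookkeeping. The one point deserving care is the rational case, where one must observe that the finite expansion produced by the algorithm evaluates exactly to $\alpha$, so that the finite sequence of convergents stabilises at $\alpha$ rather than merely converging to it; this follows immediately from the description of how termination occurs.
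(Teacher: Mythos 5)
Your proposal is correct and is essentially the argument the paper intends: the paper leaves the corollary as an immediate consequence of Theorem~\ref{rational} and Theorem~\ref{inftyp_nq_n}, and your reduction via $\beta=\alpha-b_0(\alpha)$ (which has property $I$) together with the identity $p_n/q_n=b_0(\alpha)+p'_n/q'_n$ is exactly the mechanism already used in Corollary~\ref{inftyp_nq_ncor}, while your treatment of the rational case via the finite expansion matches Theorem~\ref{rational}.
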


\begin{lem}\label{thereexistsC}
Let $\alpha\in K/\mathbb{Q}$ with property $I$.
Then, there exists $C>0$ such that for all $n \in \mathbb{Z}_{>0}$,
\begin{align*}
\left|\dfrac{q_{n-1}}{q_n}+a_{n+1}(\alpha)+(\alpha_{(n+2)})_{\infty}\right|<C.
\end{align*}
\end{lem}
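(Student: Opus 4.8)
The plan is to recognize the expression as $1/\bigl(q_n^2|\alpha_\infty-p_n/q_n|\bigr)$ via the identity already established in the proof of Lemma~\ref{boundf}, and then to supply the matching lower bound $|\alpha_\infty-p_n/q_n|\ge c/q_n^2$. Concretely, reading equation~(\ref{q2nleft}) under $\sigma_\infty$ and using that $\alpha_\infty$ is irrational (so $\alpha_\infty\ne p_n/q_n$) gives
\[
\left|\dfrac{q_{n-1}}{q_n}+a_{n+1}(\alpha)+(\alpha_{(n+2)})_\infty\right|=\dfrac{1}{q_n^2\,|\alpha_\infty-p_n/q_n|},
\]
so the lemma is equivalent to producing $c>0$ with $|\alpha_\infty-p_n/q_n|\ge c/q_n^2$ for all $n$. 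This is the delicate point, and I expect it to be the main obstacle: if the $q_n$ were ordinary integers this would be the classical Liouville estimate for the quadratic irrational $\alpha_\infty$, but here $q_n\in\mathbb{Z}[1/2]$ and the $2$-part $m_n:=-v_2(q_n)$ is unbounded (indeed $m_n\ge\lceil n/2\rceil$, since $v_2(a_k(\alpha))\le-1$ for odd $k$ by Lemma~\ref{lemord2}), so merely clearing denominators gives only $|\alpha_\infty-p_n/q_n|\ge c'\,2^{-2m_n}/q_n^2$, which is far too weak. The fix is to feed the $2$-adic valuation data back into the real estimate: I will show that, after clearing denominators, the integer value of the binary form attached to $\alpha$ is divisible by $2^{2m_n}$ up to a bounded factor.

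I would set up, exactly as in the proof of Lemma~\ref{boundpf}, the form $g(x,y)=ax^2+bxy+cy^2=a(x-\alpha y)(x-\bar\alpha y)$ attached to the minimal polynomial $ax^2+bx+c$ of $\alpha$ over $\mathbb{Z}$, and put $P_n=2^{m_n}p_n$, $Q_n=2^{m_n}q_n$. By Lemma~\ref{convergentsp2adic} one has $v_2(q_n)<v_2(p_n)$, so $P_n,Q_n\in\mathbb{Z}$ with $Q_n$ odd, and $g(P_n,Q_n)=2^{2m_n}g(p_n,q_n)\in\mathbb{Z}\setminus\{0\}$ (nonzero because $p_n/q_n$ is rational while $\alpha_\infty,\bar\alpha_\infty$ are not).

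Next I would compute the $2$-adic valuation of $g(P_n,Q_n)$. Working in $\mathbb{Q}_2$, $g(p_n,q_n)=a\,(p_n-\alpha_{\langle 2\rangle}q_n)(p_n-\bar\alpha_{\langle 2\rangle}q_n)$. Lemma~\ref{boundf2} gives $v_2(p_n-\alpha_{\langle 2\rangle}q_n)=m_n-v_2(a_{n+1}(\alpha))$. On the other hand $v_2(\alpha_{\langle 2\rangle}-p_n/q_n)\ge 2m_n$, which tends to $\infty$, while $v_0:=v_2(\bar\alpha_{\langle 2\rangle}-\alpha_{\langle 2\rangle})$ is a fixed finite integer (finite since $\sigma_2$ is injective and $\alpha\ne\bar\alpha$); so for all but finitely many $n$ the ultrametric inequality gives $v_2(p_n-\bar\alpha_{\langle 2\rangle}q_n)=v_0-m_n$. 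Adding these and using $v_2(a_{n+1}(\alpha))\le 0$ (Lemma~\ref{lemord2}),
\[
v_2\bigl(g(P_n,Q_n)\bigr)=2m_n+v_2(a)+v_0-v_2(a_{n+1}(\alpha))\ \ge\ 2m_n+v_2(a)+v_0,
\]
hence $|g(P_n,Q_n)|\ge 2^{\,v_2(a)+v_0}\,2^{2m_n}$.

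Finally, evaluating $g$ at $(P_n,Q_n)$ under $\sigma_\infty$ gives $g(P_n,Q_n)=a\,Q_n^2(p_n/q_n-\alpha_\infty)(p_n/q_n-\bar\alpha_\infty)$, and Lemma~\ref{convergentsp2adicda} bounds $|p_n/q_n-\bar\alpha_\infty|\le|\alpha_\infty|+|\bar\alpha_\infty|+1=:M$. Since $Q_n^2=2^{2m_n}q_n^2$, the previous step yields
\[
|\alpha_\infty-p_n/q_n|=\dfrac{|g(P_n,Q_n)|}{|a|\,Q_n^2\,|p_n/q_n-\bar\alpha_\infty|}\ \ge\ \dfrac{2^{\,v_2(a)+v_0}}{|a|\,M\,q_n^2},
\]
so by the first display $\left|\dfrac{q_{n-1}}{q_n}+a_{n+1}(\alpha)+(\alpha_{(n+2)})_\infty\right|\le |a|\,M\,2^{-v_2(a)-v_0}$ for all but finitely many $n$; taking $C$ to be the maximum of this value and the (finitely many) remaining terms completes the proof. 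As noted, the crux is the interplay between the two places: the real Diophantine lower bound is not visible at the real place alone and must be extracted from the $2$-adic valuation identities of Lemmas~\ref{boundf2} and~\ref{lemord2}.
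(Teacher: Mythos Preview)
Your argument is correct and follows essentially the same route as the paper: both proofs pass through the binary form $g(x,y)=ax^2+bxy+cy^2$, use Lemma~\ref{boundf2} to control the $2$-adic size of $g(p_n,q_n)$ (you via an explicit valuation of $g(P_n,Q_n)$, the paper via bounding $|g(p_n,q_n)|_2$ and invoking the product formula on $\mathbb{Z}[1/2]$), and then translate this into a real lower bound for $|g(p_n,q_n)|$, which combined with the bound on $|p_n/q_n-\bar\alpha_\infty|$ yields the desired estimate. The only cosmetic difference is that the paper cites Theorem~\ref{inftyp_nq_n} and Theorem~\ref{conver} for the bounds on $|p_n/q_n-\bar\alpha_\infty|$ and $|p_n/q_n-\bar\alpha_{\langle 2\rangle}|_2$, whereas you obtain them from the more elementary Lemma~\ref{convergentsp2adicda} and a direct ultrametric computation.
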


\begin{proof}
Let $ax^2+bx+c$ be a minimal polynomial of $\alpha$ over $\mathbb{Z}$.
We put $g(x,y)=ax^2+bxy+cy^2$.
From the equation (\ref{q2nleft}), we have
\begin{align}
&|g(p_n,q_n)|=|a||q_n^2|\left|\dfrac{p_n}{q_n}-\alpha_{\infty}\right|\left|\dfrac{p_n}{q_n}-\overline{\alpha}_{\infty}\right|\nonumber\\
&=\dfrac{|a|}{\left|\dfrac{q_{n-1}}{q_n}+a_{n+1}(\alpha)+(\alpha_{(n+2)})_{\infty}\right|}\left|\dfrac{p_n}{q_n}-\overline{\alpha}_{\infty}\right|.
\label{|a||q_n^2|}
\end{align}
From Theorem \ref{inftyp_nq_n}, there exists $C_1>0$ such that for all $n \in \mathbb{Z}_{>0}$,
\begin{align}\label{p_nq_nover}
\left|\dfrac{p_n}{q_n}-\overline{\alpha}_{\infty}\right|<C_1.
\end{align}
On the other hand, from Lemma \ref{boundf2}, we have
\begin{align}\label{|g(p_n,q_n)|_2}
&|g(p_n,q_n)|_2=|a|_2|q_n^2|_2\left|\dfrac{p_n}{q_n}-\alpha_{_{\langle 2 \rangle}}\right|_2\left|\dfrac{p_n}{q_n}-\overline{\alpha}_{_{\langle 2 \rangle}}\right|_2
\nonumber\\
&=\dfrac{|a|_2}{|a_{n+1}(\alpha)|_2}\left|\dfrac{p_n}{q_n}-\overline{\alpha}_{_{\langle 2 \rangle}}\right|_2.
\end{align}
From Theorem \ref{conver}, there exists $C_2>0$ such that for all $n \in \mathbb{Z}_{>0}$,
\begin{align}\label{leftdfracp_n}
0<\left|\dfrac{p_n}{q_n}-\overline{\alpha}_{\langle 2 \rangle}\right|_2<C_2.
\end{align}
From (\ref{|g(p_n,q_n)|_2}), (\ref{leftdfracp_n}), we can conclude that
for all $n \in \mathbb{Z}_{>0}$,
\begin{align}\label{inequalityC_3}
0<|g(p_n,q_n)|_2<|a|_2C_2,
\end{align}
Considering $g(p_n,q_n)\in \mathbb{Z}[\frac{1}{2}]$, the inequality (\ref{inequalityC_3})
implies that for all $n \in \mathbb{Z}_{>0}$,
\begin{align}\label{inequalityC_32}
|g(p_n,q_n)|>\dfrac{1}{|a|_2C_2}.
\end{align}
From (\ref{|a||q_n^2|}), (\ref{p_nq_nover}), and (\ref{inequalityC_32}), we have for all $n \in \mathbb{Z}_{>0}$,
\begin{align*}
\left|\dfrac{q_{n-1}}{q_n}+a_{n+1}(\alpha)+(\alpha_{(n+2)})_{\infty}\right|<|a||a|_2C_1C_2.
\end{align*}
\end{proof}

\begin{thm}\label{ntoinfty|q_n|infty}
Let $\alpha\in K/\mathbb{Q}$ with property $I$.
Then, 
\begin{align*}
\lim_{n\to \infty}|q_n|=\infty.
\end{align*}
\end{thm}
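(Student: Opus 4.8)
The plan is to deduce the statement quickly by contradiction from Lemma~\ref{thereexistsC} and Theorem~\ref{inftyp_nq_n}, using the identity (\ref{q2nleft}) established in the proof of Lemma~\ref{boundf}. First I would suppose that $|q_n|$ does not tend to $\infty$, so that there are a constant $M>0$ and an infinite set $S\subseteq\mathbb{Z}_{>0}$ with $|q_n|\le M$ for all $n\in S$. Since $\alpha\in K/\mathbb{Q}$ is irrational while $p_n/q_n\in\mathbb{Q}$, we have $\alpha_{\infty}\neq p_n/q_n$ for every $n$; hence the real image of the right-hand side of (\ref{q2nleft}) is a well-defined nonzero number, the factor $\frac{q_{n-1}}{q_n}+a_{n+1}(\alpha)+(\alpha_{(n+2)})_{\infty}$ is nonzero, and
\[
\left|\alpha_{\infty}-\frac{p_n}{q_n}\right|=\frac{1}{q_n^2\left|\dfrac{q_{n-1}}{q_n}+a_{n+1}(\alpha)+(\alpha_{(n+2)})_{\infty}\right|}.
\]

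Next I would invoke Lemma~\ref{thereexistsC}, which provides a constant $C>0$ with $\left|\frac{q_{n-1}}{q_n}+a_{n+1}(\alpha)+(\alpha_{(n+2)})_{\infty}\right|<C$ for all $n$. Substituting this and $|q_n|\le M$ into the displayed identity gives $\left|\alpha_{\infty}-\frac{p_n}{q_n}\right|>\frac{1}{Cq_n^2}\ge\frac{1}{CM^2}$ for every $n\in S$. On the other hand, Theorem~\ref{inftyp_nq_n} asserts $\lim_{n\to\infty}p_n/q_n=\alpha_{\infty}$, so $\left|\alpha_{\infty}-p_n/q_n\right|\to 0$; choosing $n\in S$ large enough produces $\left|\alpha_{\infty}-p_n/q_n\right|<\frac{1}{CM^2}$, a contradiction. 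Therefore $\lim_{n\to\infty}|q_n|=\infty$.

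I expect essentially no serious obstacle here, because all the substantive work has already been done: Lemma~\ref{thereexistsC} is exactly the lower bound $|\alpha_{\infty}-p_n/q_n|\gg 1/q_n^2$ in disguise (it is where the $2$-adic estimate forcing $|g(p_n,q_n)|_2$, hence $|g(p_n,q_n)|$, away from $0$ enters), and Theorem~\ref{inftyp_nq_n} supplies the real convergence. The only point requiring a moment's care is the non-vanishing of the denominator in the displayed identity, which is immediate from the irrationality of $\alpha$; no new estimate is needed.
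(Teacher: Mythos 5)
Your proposal is correct and uses exactly the same ingredients as the paper's proof: the identity (\ref{q2nleft}), the upper bound from Lemma~\ref{thereexistsC}, and the real convergence from Theorem~\ref{inftyp_nq_n}. The only difference is cosmetic — you argue by contradiction with a bounded subsequence, while the paper computes the limit of $|q_n|^2$ directly as $\bigl(\,|\alpha_{\infty}-p_n/q_n|\cdot|\tfrac{q_{n-1}}{q_n}+a_{n+1}(\alpha)+(\alpha_{(n+2)})_{\infty}|\,\bigr)^{-1}\to\infty$.
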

\begin{proof}
From Theorem \ref{inftyp_nq_n}, Lemma \ref{thereexistsC}, and  the equation (\ref{q2nleft}), we have
\begin{align*}
\lim_{n\to \infty} |q^2_n|
=
\lim_{n\to \infty}\dfrac{|q^2_n|\left|\dfrac{q_{n-1}}{q_n}+a_{n+1}(\alpha)+(\alpha_{(n+2)})_{\infty}\right|}{\left|\dfrac{q_{n-1}}{q_n}+a_{n+1}(\alpha))+(\alpha_{(n+2)})_{\infty}\right|}=\infty.
\end{align*}
\end{proof}

\begin{cor}\label{ntoinfty|q_n|infty2}
Let $\alpha\in K/\mathbb{Q}$ and $\{\frac{p_n}{q_n}\}$ be its convergents related to Algorithm (Definition \ref{algol1}).

Then, 
\begin{align*}
\lim_{n\to \infty}|q_n|=\infty.
\end{align*}
\end{cor}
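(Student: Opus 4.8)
The plan is to reduce the statement to Theorem~\ref{ntoinfty|q_n|infty}, exactly as Corollary~\ref{inftyp_nq_ncor} was reduced to Theorem~\ref{conver} and Corollary~\ref{inftyp_nq_n2} to Theorem~\ref{inftyp_nq_n}. Put $\beta:=\alpha-b_0(\alpha)$. As observed in the proof of Corollary~\ref{inftyp_nq_ncor}, $\beta$ has property~$I$, and since $b_0(\alpha)\in\mathbb{Z}[\tfrac1p]\subset\mathbb{Q}$ while $\alpha\notin\mathbb{Q}$ we have $\beta\in K/\mathbb{Q}$. Hence Theorem~\ref{ntoinfty|q_n|infty} applies to $\beta$: writing $q'_n$ for the denominator of the $n$-th convergent of $\beta$ produced by Algorithm (Definition~\ref{algol2}), we obtain $\lim_{n\to\infty}|q'_n|=\infty$. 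It therefore suffices to show that the denominators $q_n$ of $\alpha$ under Algorithm (Definition~\ref{algol1}) satisfy $q_n=q'_n$ for every $n\in\mathbb{Z}_{>0}$.

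To do so I would first match the iterates: Algorithm (Definition~\ref{algol1}) applied to $\alpha$ gives $\alpha_1=\tfrac{1}{\alpha-b_0(\alpha)}=\tfrac1\beta$, and since $b^{(0)}(\beta)=0$ by Lemma~\ref{twoalgol}, Algorithm (Definition~\ref{algol1}) applied to $\beta$ gives $\beta_1=\tfrac1\beta$ as well; consequently $\alpha_n=\beta_n$ and $b_n(\alpha)=b_n(\beta)$ for all $n\ge1$, while $b_0(\beta)=0$. Comparing $F_{p,\epsilon}$, $b^{(\epsilon)}$ with $T_{p,\epsilon}$, $a^{(\epsilon)}$ under the substitution $\gamma\mapsto 1/\gamma$ (as in the proof of Corollary~\ref{rational2}) gives $b_n(\beta)=a_n(\beta)$ for $n\ge1$. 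It then remains to rewrite the defining matrix products. With $J=\left(\begin{smallmatrix}0&1\\1&0\end{smallmatrix}\right)$ and $A(c)=\left(\begin{smallmatrix}c&1\\1&0\end{smallmatrix}\right)$ one has the identities $\left(\begin{smallmatrix}0&1\\1&c\end{smallmatrix}\right)=JA(c)J$ and $A(c)=\left(\begin{smallmatrix}1&c\\0&1\end{smallmatrix}\right)J$; using these together with $J^2=\mathrm{Id}$, Definitions~\ref{convergents1} and~\ref{convergents} yield, with $M:=A(a_1(\beta))\cdots A(a_n(\beta))$,
\begin{align*}
\begin{pmatrix}p'_{n-1}&p'_n\\ q'_{n-1}&q'_n\end{pmatrix}&=JMJ,\\
\begin{pmatrix}p_n&p_{n-1}\\ q_n&q_{n-1}\end{pmatrix}&=\begin{pmatrix}1&b_0(\alpha)\\ 0&1\end{pmatrix}JM,
\end{align*}
from which $q_n=q'_n$ (and $p_n=p'_n+b_0(\alpha)q'_n$, recovering $\tfrac{p_n}{q_n}=b_0(\alpha)+\tfrac{p'_n}{q'_n}$); indeed the left multiplier is upper triangular unipotent, so it leaves the bottom row of $JM$ untouched. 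Combining $q_n=q'_n$ with $\lim_n|q'_n|=\infty$ gives $\lim_n|q_n|=\infty$.

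The only mildly delicate point I expect is the bookkeeping forced by the two different matrix conventions of Definitions~\ref{convergents1} and~\ref{convergents} (and by the fact that $b_0(\alpha)$ may have a denominator a power of $p$), but this is entirely absorbed into the conjugation identities above; everything else is the reduction already carried out for Corollaries~\ref{inftyp_nq_ncor} and~\ref{inftyp_nq_n2}.
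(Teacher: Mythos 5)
Your proposal is correct and follows exactly the reduction the paper intends (and leaves implicit): pass to $\beta=\alpha-b_0(\alpha)$, which has property $I$, apply Theorem~\ref{ntoinfty|q_n|infty}, and observe that the denominators coincide since $\frac{p_n}{q_n}=b_0(\alpha)+\frac{p'_n}{q'_n}$, just as in the proof of Corollary~\ref{inftyp_nq_ncor}. Your explicit matching of the iterates via Lemma~\ref{twoalgol} and the matrix identities verifying $q_n=q'_n$ simply spell out the details the paper omits.
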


\begin{lem}\label{2adicalphainK}
Let $\alpha,\beta\in K/\mathbb{Q}$ with $(\alpha,\beta)_{\langle 2 \rangle}\in D_{2}^1$. 
Let $\mathbf{a}=(\alpha,\beta)$.
We denote $(u_n,v_n)=(\mathbf{a}_{(n)})_{\langle 2 \rangle}$ for $n\in \mathbb{Z}_{>0}$.
Then, $v_2(v_n)=v_2(a_{n-1}(\alpha))$ for $n\in \mathbb{Z}_{>1}$.
\end{lem}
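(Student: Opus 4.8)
The plan is to run Algorithm (Definition~\ref{algol3}) on $\mathbf{a}=(\alpha,\beta)$ and compute the $2$-adic valuation of the second coordinate directly, using Lemma~\ref{lemord2} to control the first coordinate and Lemma~\ref{Letalpha=} to keep track of which $2$-adic domain each iterate lies in.

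First I would verify that the iteration never terminates and that every iterate lies in $(K/\mathbb{Q})\times K^{\times}$. Writing $\beta_{(n)}$ for the second entry of $\mathbf{a}_{(n)}$ (so the first entry is $\alpha_{(n)}$), if $\alpha_{(n)}\in K/\mathbb{Q}$ then $\alpha_{(n+1)}=\frac{1}{\alpha_{(n)}}-a_n(\alpha)\in K/\mathbb{Q}$ because $a_n(\alpha)\in\mathbb{Z}[\frac12]\subset\mathbb{Q}$, and the same argument shows $\beta_{(n)}\in K/\mathbb{Q}$ for all $n$; in particular $\alpha_{(n)},\beta_{(n)}\ne 0$. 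Hence Lemma~\ref{Letalpha=} applies at every step, and starting from the hypothesis $(u_1,v_1)=(\alpha,\beta)_{\langle 2\rangle}\in D_2^1$, a straightforward induction using parts (1) and (2) of that lemma (the algorithm uses $\hat T_{2,1}$ at odd indices and $\hat T_{2,0}$ at even indices) gives $(u_n,v_n)\in D_2^1$ for odd $n$ and $(u_n,v_n)\in D_2^0$ for even $n$. In particular $v_2(v_n)\le 0$ for odd $n$ and $v_2(v_n)<0$ for even $n$.

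Now fix $n>1$. The second-coordinate rule of $\hat T_{2,\epsilon}$ gives $v_n=\frac{1}{v_{n-1}}-a_{n-1}(\alpha)$ in $\mathbb{Q}_2$. If $n-1$ is odd, then $v_2(v_{n-1})\le 0$, so $v_2(1/v_{n-1})\ge 0$, while Lemma~\ref{lemord2} gives $v_2(a_{n-1}(\alpha))=-v_2((\alpha_{(n-1)})_{\langle 2\rangle})<0$. If $n-1$ is even, then $v_2(v_{n-1})<0$, so $v_2(1/v_{n-1})>0$, while Lemma~\ref{lemord2} gives $v_2(a_{n-1}(\alpha))=-v_2((\alpha_{(n-1)})_{\langle 2\rangle})\le 0$. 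In both cases $v_2(1/v_{n-1})>v_2(a_{n-1}(\alpha))$, so the strong triangle inequality yields $v_2(v_n)=v_2(a_{n-1}(\alpha))$, which is the assertion.

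There is no genuine obstacle here; the only point requiring attention is the bookkeeping that licenses the appeals to Lemma~\ref{Letalpha=} and Lemma~\ref{lemord2} — namely that the iterates stay in $K/\mathbb{Q}$ (so that the algorithm is defined and the $2$-adic domains propagate) and that the condition $v_2(\alpha_{\langle 2\rangle})>0$ built into $D_2^1$ is exactly what Lemma~\ref{lemord2} needs of the first coordinate. Once that is settled, the valuation identity is a one-line ultrametric estimate in each parity case.
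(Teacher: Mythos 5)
Your proposal is correct and follows essentially the same route as the paper: propagate the $D_2^1/D_2^0$ membership of $(\mathbf{a}_{(n)})_{\langle 2\rangle}$ along the parity of $n$ (Lemma~\ref{Letalpha=}), use Lemma~\ref{lemord2} to compare $v_2(a_{n-1}(\alpha))$ with $v_2(1/v_{n-1})$, and conclude by the ultrametric inequality applied to $v_n=\frac{1}{v_{n-1}}-a_{n-1}(\alpha)$. Your explicit check that the iterates stay irrational (so the algorithm never terminates and the cited lemmas apply) is a point the paper leaves implicit.
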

\begin{proof}
For $n \in \mathbb{Z}_{>0}$, we observe that:
\begin{itemize}
\item If $n$ is odd, then $v_2(u_n) > 0$ and $v_2(v_n) \leq 0$.
\item If $n$ is even, then $v_2(u_n) \geq 0$ and $v_2(v_n) < 0$.
\end{itemize}
According to Lemma \ref{lemord2}, we have $v_2(u_n) = -v_2(a_{n}(\alpha))$ for $n \in \mathbb{Z}_{>0}$.
Thus, we find that $-v_2(v_n) > v_2(a_{n}(\alpha))$ for $n \in \mathbb{Z}_{>0}$.
Now let $n \in \mathbb{Z}_{>1}$. Since $v_n = \frac{1}{v_{n-1}} - a_{n-1}(\alpha)$, it follows that $v_2(v_n) = v_2(a_{n-1}(\alpha))$.
\end{proof}

Next, we have one of our main theorems.

\begin{thm}\label{Lagrange}
Let $\alpha\in K/\mathbb{Q}$ with property $I$.
Then, there exist $n_1, n_2\in \mathbb{Z}_{>0}$ with  $n_1<n_2$ such that
$\alpha_{(n_1)}=\alpha_{(n_2)}$ and $n_1\equiv n_2 \mod\ 2$.
\end{thm}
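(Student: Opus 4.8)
The plan is to show that the orbit $\{\alpha_{(n)}\}$ takes only finitely many values in $K$, so that some value repeats; a parity argument then upgrades a repeat to a repeat with matching parity. To bound the orbit, I would work with the pair $\mathbf{a}=(\alpha,\bar\alpha)$ and track both the real and $2$-adic sizes of the two coordinates simultaneously. Write the minimal polynomial of $\alpha$ over $\mathbb{Z}$ as $ax^2+bx+c$ and set $g(x,y)=ax^2+bxy+cy^2$. The key invariant is that $\alpha_{(n)}$ is a quadratic number whose conjugate $\overline{\alpha_{(n)}}$ is exactly the second coordinate $v_n$ of $\mathbf{a}_{(n)}$ (this is the content of the skew-product construction, and one checks by induction using $\hat T_{2,\epsilon}$ that $\overline{\alpha_{(n)}}=v_n$ since conjugation is a field automorphism commuting with the rational operations defining the algorithm).

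The argument then proceeds in four steps. First, by Lemma~\ref{Letalpha2adic} and Lemma~\ref{alphainKda}, after passing to a suitable odd index we may assume $(\alpha,\bar\alpha)_{\langle 2\rangle}\in D_2^1$ and, using Lemma~\ref{Dinfty} and Lemma~\ref{Letalpha=} to propagate, that $(\mathbf{a}_{(n)})_{\langle 2\rangle}$ and $(\mathbf{a}_{(n)})_\infty$ stay in the respective domains $D_2^{\tau(n)}$, $D_\infty^{\tau(n)}$ for all $n$ beyond some point (so $|v_n|_\infty$ is bounded, being comparable to $|u_n|_\infty$ plus a bounded quantity, and $v_2(v_n)=v_2(a_{n-1}(\alpha))$ by Lemma~\ref{2adicalphainK}). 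Second, since $\alpha_{(n)}$ has property $I$-type bounds, $(\alpha_{(n)})_\infty$ is bounded (Lemma~\ref{alphainK}) and $(\alpha_{(n)})_{\langle 2\rangle}$ has bounded denominator in the $2$-adic sense (Lemma~\ref{lemord2}); by Lemma~\ref{boundpf} the partial quotients $a_n(\alpha)$ have bounded $2$-adic absolute value, hence bounded $2$-adic denominator. Third, I combine these: $\alpha_{(n)}$ and its conjugate $v_n$ are both bounded in $\mathbb{R}$ and both have $2$-adic denominators bounded by a fixed power of $2$; therefore $g$ evaluated appropriately, or more directly the coefficients of the minimal polynomial of $\alpha_{(n)}$ (namely the elementary symmetric functions $\alpha_{(n)}+v_n$ and $\alpha_{(n)}v_n$), are rationals with bounded numerator and bounded denominator, hence lie in a finite set. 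Fourth, a quadratic field contains only finitely many elements with minimal polynomial in a given finite set, so $\{\alpha_{(n)}\}$ is finite; pigeonhole gives $\alpha_{(m)}=\alpha_{(m')}$ for some $m<m'$, and since only $F_{2,0},F_{2,1}$ alternate, if $m\not\equiv m'\bmod 2$ we can replace the pair by $(m, m'')$ for a later repeat of the same value occurring at the correct parity (iterating the recurrence from a repeated value produces a periodic tail, inside which some two equal-value indices share parity, because the tail has a period which, combined at worst with doubling, yields an even offset).

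The main obstacle I anticipate is the third step: making precise that $\alpha_{(n)}$ ranges over a finite set. The real-size bound on $\alpha_{(n)}$ and on its conjugate $v_n$ controls the archimedean size of the symmetric functions, while the $2$-adic bounds control only the power of $2$ in their denominators — I still need to rule out other primes in the denominator. This is where I would invoke that $p_n,q_n\in\mathbb{Z}[\tfrac12]$ (noted after Definition~\ref{convergents}), so $\alpha_{(n)}=-\tfrac{q_{n-2}\,\alpha+(\text{stuff})}{q_{n-1}\,\alpha+(\text{stuff})}$ expressed through the unimodular-up-to-powers-of-$2$ convergent matrices forces the denominator of $\alpha_{(n)}$ (and of $v_n=\overline{\alpha_{(n)}}$) to be a power of $2$ only; combined with the archimedean bound on numerator this pins down finiteness. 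Once finiteness of $\{\alpha_{(n)}\}$ is in hand, the parity bookkeeping is routine: a repeated value forces eventual periodicity, and within an eventually periodic sequence one always finds two equal terms at indices congruent mod $2$ (take two full periods apart if the period is odd, one period apart if it is even).
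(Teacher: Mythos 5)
Your overall strategy is the paper's: run the skew product on $(\alpha,\bar\alpha)$, use Lemmas~\ref{Letalpha2adic}, \ref{alphainKda}, \ref{Letalpha=}, \ref{Dinfty} to get (eventually) into $D_2^{\tau(n)}$ and $D_\infty^{\tau(n)}$, use Lemmas~\ref{2adicalphainK} and \ref{boundpf} for the $2$-adic control, show the minimal polynomials of the $\alpha_{(n)}$ form a finite set, and finish by pigeonhole. But the finiteness step, which is the heart of the matter, has a genuine gap at two points. First, you assert that $|v_n|_\infty=|(\overline{\alpha_{(n)}})_\infty|$ is bounded because the pair stays in $D_\infty^{\tau(n)}$; membership in $D_\infty^{\tau(n)}$ gives $|u_n-v_n|>\tfrac12(1+\tau(n))$, a \emph{lower} bound on the separation, and no upper bound on $|v_n|$ at all (the upper bounds $|v_n|\le 1,2$ in the proof of Lemma~\ref{Letalpha} hold precisely in the complementary case where the orbit never enters the domains). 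Second, even granting real bounds, ``bounded in $\mathbb{R}$ and $2$-adic denominator bounded'' does not put the symmetric functions $\alpha_{(n)}+\overline{\alpha_{(n)}}$, $\alpha_{(n)}\overline{\alpha_{(n)}}$ in a finite set, because odd primes may occur in their denominators: the convergent-matrix observation only shows $\alpha_{(n)}$ satisfies a quadratic $a_nx^2+b_nx+c_n$ with $a_n,b_n,c_n\in\mathbb{Z}[\tfrac12]$, so the symmetric functions are $-b_n/a_n$ and $c_n/a_n$, whose denominators carry the odd part of $a_n$; bounding $a_n$ is exactly the nontrivial point, and your outline never does it.

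The ingredient you are missing is the one the paper's proof turns on: the discriminant $d_n=b_n^2-4a_nc_n$ of the minimal polynomial of $\alpha_{(n)}$ over $\mathbb{Z}[\tfrac12]$ is \emph{invariant} under the recursion $p_n(x)=(x+a_{n-1}(\alpha))^2p_{n-1}\bigl(1/(x+a_{n-1}(\alpha))\bigr)$. Writing $d=a_n^2(\alpha_{(n)}-\overline{\alpha_{(n)}})^2$, the separation \emph{lower} bound from $D_\infty^{\tau(n)}$ yields the \emph{upper} bound $|a_n|<2\sqrt d$, while Lemmas~\ref{2adicalphainK} and \ref{boundpf} give $\tfrac12 v_2(d)\le v_2(a_n)<\tfrac12(v_2(d)+C_1)$; since $a_n\in\mathbb{Z}[\tfrac12]$, this forces $a_n$ into a finite set, and only then does one get $|\alpha_{(n)}-\overline{\alpha_{(n)}}|_\infty\le\sqrt d/\min_m|a_m|$, hence the archimedean bound on $\overline{\alpha_{(n)}}$ and the finiteness of the $b_n$ and $c_n$. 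So both of your gaps are repaired simultaneously by the discriminant argument, which your proposal does not contain. A small further remark on the endgame: a repeat $\alpha_{(m)}=\alpha_{(m')}$ with $m\not\equiv m'\bmod 2$ does not by itself produce a periodic tail, since the map applied at step $n$ depends on the parity of $n$; the clean way to conclude is to apply pigeonhole to the finitely many pairs $(\alpha_{(n)},\tau(n))$, which directly gives $n_1<n_2$ with $\alpha_{(n_1)}=\alpha_{(n_2)}$ and $n_1\equiv n_2\bmod 2$.
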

\begin{proof}
Let $\mathbf{a}=(\alpha,\bar{\alpha})$.
We apply Algorithm (Definition \ref{algol3}) to $\mathbf{a}$.
From Lemma \ref{Letalpha2adic} and \ref{alphainKda}, there exists $N_1, N_2\in \mathbb{Z}_{>0}$
such that $(\mathbf{a}_{(2N_1+1)})_{\langle 2 \rangle}\in D_{2}^1$ and 
$(\mathbf{a}_{(2N_2+1)})_{\infty}\in D_{\infty}^1$.
Using Lemma \ref{Letalpha=} and \ref{Dinfty}, we see that
$(\mathbf{a}_{(2N_3+1)})_{\langle 2 \rangle}\in D_{2}^1$ and 
$(\mathbf{a}_{(2N_3+1)})_{\infty}\in D_{\infty}^1$, where
$N_3=\max\{N_1,N_2\}$.
For the sake of simplicity, we assume that 
$(\mathbf{a})_{\langle 2 \rangle}\in D_{2}^1$ and 
$(\mathbf{a})_{\infty}\in D_{\infty}^1$.
We also remark that $\mathbf{a}_{n}=(\alpha_{(n)}, \overline{\alpha_{(n)}})$.
Let $p_1(x)=a_1x^2+b_1x+c_1$ be a minimal polynomial of $\alpha_{(1)}$ over $\mathbb{Z}$.
For $n\in \mathbb{Z}_{>1}$,we define $p_n(x)\in \mathbb{Z}[\frac{1}{2}][x]$ recursively as:
\begin{align}\label{p_n(x)}
p_n(x):=(x+a_{n-1}(\alpha))^2p_{n-1}\left(\dfrac{1}{x+a_{n-1}(\alpha)}\right).
\end{align}
From the fact that $\alpha_{(n)}=\frac{1}{\alpha_{(n-1)}}-a_{n-1}(\alpha)$,
we can conclude that  
$p_n(x)$ is a minimal polynomial of $\alpha_{(n)}$ over $\mathbb{Z}[\frac{1}{2}]$.
We set $a_nx^2+b_nx+c_n=p_n(x)$ for $n\in \mathbb{Z}_{>1}$.
For $n\in \mathbb{Z}_{>0}$, we define $d_n$ as the discriminant of $p_n(x)$, which is given by
$d_n=b_n^2-4a_nc_n$.
The relation (\ref{p_n(x)}) between $p_n(x)$ and $p_{n-1}(x)$ 
implies $d_n=d_{n-1}$ for $n\in \mathbb{Z}_{>1}$. 
Let $n\in \mathbb{Z}_{>1}$.
We set $d=d_1$.
From Lemma \ref{2adicalphainK},  
we have 
\begin{align*}
v_2((\alpha_{(n)}-\overline{\alpha_{(n)}})_{\langle 2 \rangle})=v_2(a_{n-1}(\alpha)).
\end{align*}
Therefore, from Lemma \ref{boundpf}, there exists $C_1>0$ such that
\begin{align}\label{C_1v_2}
-C_1<v_2((\alpha_{(n)}-\overline{\alpha_{(n)}})_{\langle 2 \rangle})\leq 0.
\end{align} 
Since it holds that $d=d_n=a_n^2(\alpha_{(n)}-\overline{\alpha_{(n)}}))^2$, from the inequality (\ref{C_1v_2}),
we have
\begin{align}\label{dfrac12v_2}
\dfrac{1}{2}v_2(d)  \leq v_2(a_n)<\dfrac{1}{2}(v_2(d)+C_1).
\end{align}
On the other hand, from the fact that $ (\mathbf{a}_{(n)})_{\infty}\in D_{\infty}^{\tau(n)}$, we have
\begin{align*}
|(\alpha_{(n)}-\overline{\alpha_{(n)}})_{\infty}|> \dfrac{1}{2},
\end{align*}
which implies 
\begin{align}\label{|a_n|<|d|}
|a_n|<2\sqrt{d}.
\end{align}
From the inequalities (\ref{dfrac12v_2}) and  (\ref{|a_n|<|d|}), there exists $C_2>0$ such that
\begin{align*}
\sharp\{a_m\mid m\in \mathbb{Z}_{>0}\}<C_2.
\end{align*}
Since $b_n=-a_n(\alpha_{(n)}+\overline{\alpha_{(n)}})_{\langle 2 \rangle}$, from the inequalities (\ref{C_1v_2}) and (\ref{dfrac12v_2}), we can conclude that
\begin{align}\label{-C_1+dfrac}
-C_1+\dfrac{1}{2}v_2(d)  \leq v_2(b_n)<\dfrac{1}{2}(v_2(d)+C_1).
\end{align}
Now, we have
\begin{align}\label{|alpha_(n)}
|(\alpha_{(n)}-\overline{\alpha_{(n)}})_{\infty}|=\dfrac{\sqrt{d}}{|a_n|}\leq \dfrac{\sqrt{d}}{C_3},
\end{align}
where $C_3=\min \{|a_m|\mid m\in \mathbb{Z}_{>0}\}$.
The inequality (\ref{|alpha_(n)}) leads to 
\begin{align}\label{1+dfrac|d|}
|(\overline{\alpha_{(n)}})_{\infty}| \leq  |(\alpha_{(n)})_{\infty}|+|(\alpha_{(n)})_{\infty}-(\overline{\alpha_{(n)}})_{\infty}|
\leq 1+\dfrac{\sqrt{d}}{C_3}.
\end{align}
Therefore, we have 
\begin{align}\label{|b_n|=|a_n|}
|b_n|=|a_n||(\alpha_{(n)})_{\infty}+(\overline{\alpha_{(n)}})_{\infty}|<2\sqrt{d}\left(2+\dfrac{\sqrt{d}}{C_3}\right).
\end{align}
From the inequalities (\ref{-C_1+dfrac}) and  (\ref{|b_n|=|a_n|}), there exists $C_4>0$ such that
\begin{align*}
\sharp\{b_m\mid m\in \mathbb{Z}_{>0}\}<C_4.
\end{align*}
Since $c_n=a_n(\alpha_{(n)})_{\langle 2 \rangle}(\overline{\alpha_{(n)}})_{\langle 2 \rangle}$, similarly, we have
\begin{align}\label{v_2(c_n)}
\dfrac{1}{2}v_2(d)-C_1 \leq v_2(c_n)<\dfrac{1}{2}(v_2(d)+3C_1).
\end{align}
From the inequality (\ref{|a_n|<|d|}) and (\ref{1+dfrac|d|}), we have
\begin{align}\label{c_n|=|a_n||}
|c_n|=|a_n||(\alpha_{(n)})_{\infty}||(\overline{\alpha_{(n)}})_{\infty}|<2\sqrt{d}\left(1+\dfrac{\sqrt{d}}{C_3}\right).
\end{align}
From the inequalities (\ref{v_2(c_n)}) and  (\ref{c_n|=|a_n||}), there exists $C_5>0$ such that
\begin{align*}
\sharp\{c_m\mid m\in \mathbb{Z}_{>0}\}<C_5.
\end{align*}
Therefore, since $\{p_n(x) \mid n \in \mathbb{Z}_{>0}\}$ is a finite set, the theorem's statement can be easily deduced.
\end{proof}

\begin{cor}\label{Lagrange2}
Let $\alpha\in K/\mathbb{Q}$.
Then, there exist $n_1, n_2\in \mathbb{Z}_{\geq 0}$ with  $n_1<n_2$ such that
$\alpha_{n_1}=\alpha_{n_2}$ and $n_1\equiv n_2 \mod\ 2$.
\end{cor}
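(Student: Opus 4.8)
The plan is to deduce Corollary~\ref{Lagrange2} from Theorem~\ref{Lagrange} by reducing the general case (Algorithm~\ref{algol1} applied to an arbitrary $\alpha\in K/\mathbb{Q}$) to the case of an element with property $I$ (on which Algorithm~\ref{algol2} and hence Theorem~\ref{Lagrange} apply). First I would recall, as established in the discussion surrounding Definition~\ref{property I} and in Lemma~\ref{twoalgol}, that $\beta:=\alpha-b_0(\alpha)$ has property $I$: indeed $v_p(\beta_{\langle p\rangle})>0$ by the construction of $b^{(0)}$, and $\beta_\infty=\alpha_\infty-b^{(0)}(\alpha)\in(-\tfrac{p}{2},\tfrac{p}{2}]$ by Lemma~\ref{ineq1}. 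Moreover $\beta\in K/\mathbb{Q}$ since $\alpha\in K/\mathbb{Q}$ and $b_0(\alpha)\in\mathbb{Q}$.

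Next I would identify the two sequences. Exactly as in the proof of Corollary~\ref{rational2} (and used again in Corollary~\ref{inftyp_nq_ncor}), the sequence $\{\alpha_n\}_{n\ge 1}$ produced by Algorithm~\ref{algol1} from $\alpha$ coincides with the sequence $\{\alpha_{(n)}\}_{n\ge 1}$ produced by Algorithm~\ref{algol2} from $\beta$: one has $\alpha_1=\frac{1}{\alpha_0-b_0(\alpha)}=\frac{1}{\beta}=\beta_{(2)}$ wait — more precisely, since $b^{(0)}(\beta)=0$ by Lemma~\ref{twoalgol}, the first step of Algorithm~\ref{algol2} on $\beta$ is $\beta_{(2)}=T_{p,0}(\beta)=\frac{1}{\beta}=\frac{1}{\alpha-b_0(\alpha)}=\alpha_1$, and then an easy induction on $n$ gives $\alpha_n=\beta_{(n+1)}$ for all $n\ge 1$ for which the sequences are defined, with the parity shifting by one. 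Care is needed here with the parity bookkeeping: $\alpha_n$ is obtained from $\alpha_{n-1}$ via $F_{p,\tau(n-1)}$ while $\beta_{(n+1)}$ is obtained from $\beta_{(n)}$ via $T_{p,\tau(n)}$, and since $F_{p,\epsilon}$ and $T_{p,\epsilon}$ differ only by the preliminary inversion already absorbed into $\beta$, the indices match up consistently once the shift by one is fixed.

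Then I would invoke Theorem~\ref{Lagrange}: there exist $m_1<m_2$ with $m_1\equiv m_2\bmod 2$ and $\beta_{(m_1)}=\beta_{(m_2)}$. Setting $n_1=m_1-1$ and $n_2=m_2-1$ gives $n_1<n_2$, $n_1\equiv n_2\bmod 2$, and $\alpha_{n_1}=\beta_{(m_1)}=\beta_{(m_2)}=\alpha_{n_2}$, which is the assertion. One should also note that $\{\alpha_n\}$ is genuinely infinite in this setting — since $\alpha\in K/\mathbb{Q}$ is a quadratic irrational the algorithm never halts, as a halt would force $\alpha_n\in\mathbb{Q}$, contradicting the fact (used implicitly throughout) that all $\alpha_n$ lie in $K/\mathbb{Q}$; this guarantees $\beta$ has property $I$ and the hypotheses of Theorem~\ref{Lagrange} are met.

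The main obstacle I anticipate is purely organizational rather than mathematical: making the index correspondence $\alpha_n\leftrightarrow\beta_{(n+1)}$ and the attendant parity shift completely precise, and confirming that the congruence $n_1\equiv n_2\bmod 2$ survives the shift by one (it does, since shifting both indices by the same amount preserves their difference). Everything else is a direct application of Theorem~\ref{Lagrange} together with the already-proven identification of the two algorithms on $\beta=\alpha-b_0(\alpha)$, so no substantial new estimates are required.
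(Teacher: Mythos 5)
Your overall strategy --- pass to $\beta=\alpha-b_0(\alpha)$, which lies in $K/\mathbb{Q}$ and has property $I$ (Lemma \ref{ineq1} plus $v_p(\beta_{\langle p\rangle})>0$), apply Theorem \ref{Lagrange} to $\beta$, and transport the resulting periodicity back to the Algorithm (Definition \ref{algol1}) orbit of $\alpha$ --- is exactly the intended route, and your observation that the algorithm never halts for a quadratic irrational is correct. However, the identification you use to transport the periodicity is wrong as stated. First, the initial step of Algorithm (Definition \ref{algol2}) on $\beta$ is $\beta_{(2)}=T_{p,1}(\beta_{(1)})$, since the index $n=1$ is odd, not $T_{p,0}(\beta)$. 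Second, $T_{p,0}(\beta)\neq \frac{1}{\beta}$ in general: $T_{p,\epsilon}$ inverts first and then subtracts a digit of the reciprocal, so $T_{p,0}(\beta)=\frac{1}{\beta}-a^{(0)}(\beta)$ with $a^{(0)}(\beta)=b^{(0)}\!\left(\frac{1}{\beta}\right)$, whereas Lemma \ref{twoalgol} asserts $b^{(0)}(\beta)=0$, a statement about $\beta$ itself (i.e.\ about the Algorithm (Definition \ref{algol1}) step applied to $\beta$), not about $a^{(0)}(\beta)$; indeed $v_p\!\left(\left(\frac{1}{\beta}\right)_{\langle p\rangle}\right)<0$ forces $a^{(0)}(\beta)\neq 0$. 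Third, and as a consequence, the claimed correspondence $\alpha_n=\beta_{(n+1)}$ is false: one has $\beta_{(n+1)}=\frac{1}{\beta_{(n)}}-a_n(\beta)=\alpha_n-b_n(\alpha)$, and $b_n(\alpha)\neq 0$ for $n\geq 1$ by Lemma \ref{lemord}. Since your final step is the chain $\alpha_{n_1}=\beta_{(m_1)}=\beta_{(m_2)}=\alpha_{n_2}$, the argument as written does not go through.

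The repair is short and is precisely the identification the paper establishes in the proof of Corollary \ref{rational2}: by induction, $a_n(\beta)=b_n(\alpha)$ and $\alpha_n=\frac{1}{\beta_{(n)}}$ for all $n\geq 1$ (equivalently $\beta_{(n)}=\alpha_{n-1}-b_{n-1}(\alpha)$); the parities of the two algorithms match with no shift at all, because $F_{p,\epsilon}$ is ``subtract then invert'' while $T_{p,\epsilon}$ is ``invert then subtract''. With this, Theorem \ref{Lagrange} gives $n_1<n_2$ of equal parity with $\beta_{(n_1)}=\beta_{(n_2)}$, hence $\alpha_{n_1}=\frac{1}{\beta_{(n_1)}}=\frac{1}{\beta_{(n_2)}}=\alpha_{n_2}$ with the very same indices, so the parity bookkeeping and the shift by one that you were worried about simply disappear. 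In short: right reduction, but the concrete conjugacy between the two algorithms must be $\alpha_n=1/\beta_{(n)}$, not $\alpha_n=\beta_{(n+1)}$.
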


We can represent convergents as a ratio of integers as follows.

\begin{defn}\label{convergents3}
For $n\in \mathbb{Z}_{> 0}$, $p'_n\in\mathbb{Z}$ and $q'_n\in\mathbb{Z}_{> 0}$ are defined by
\begin{align*}
p'_n=2^{-\sum_{s=1}^{n}v_2(a_{(s)}(\alpha))}p_n\ \ \text{and}
\ \ q'_n=2^{-\sum_{s=1}^{n}v_2(a_{(s)}(\alpha))}q_n.
\end{align*}
\end{defn}

We remark that for $n\in \mathbb{Z}_{> 0}$ $\dfrac{p'_n}{q'_n}=\dfrac{p_n}{q_n}$ and $(p'_n,q'_n)=1$.
The following theorems demonstrate the quality of the approximation.

\begin{thm}\label{qualityofapproximation}
Let $\alpha\in K/\mathbb{Q}$ with property $I$.
There exists a constant $C>0$ such that 
for all $n\in\mathbb{Z}_{>0}$,  
\begin{align*}
\dfrac{C}{|q'_n|^{2}}<\left|\alpha_{\infty}-\dfrac{p'_n}{q'_n}\right|
\left|\alpha_{\langle 2 \rangle}-\dfrac{p'_n}{q'_n}\right|_2
< \dfrac{2}{(1+\tau(n))|a_{n+1}(\alpha)|_2|q'_n|^2}.
\end{align*}
\end{thm}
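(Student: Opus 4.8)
The plan is to deduce both sides of the inequality from the one-sided estimates already available, after rewriting everything in terms of the normalized convergents; recall $p'_n/q'_n = p_n/q_n$ and, by Lemma~\ref{convergentsp2adic}, $v_2(q_n) = \sum_{s=1}^{n} v_2(a_s(\alpha)) =: e_n$, so that $q'_n = 2^{-e_n}q_n$, whence $v_2(q'_n) = 0$ (hence $|q'_n|_2 = 1$) and $|q_n|^2 = 2^{2e_n}|q'_n|^2$.

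For the upper bound, I would simply multiply Lemma~\ref{boundf}, which gives $|\alpha_\infty - p'_n/q'_n| < \frac{2}{(1+\tau(n))|q_n|^2} = \frac{2\cdot 2^{-2e_n}}{(1+\tau(n))|q'_n|^2}$, by Lemma~\ref{boundf2}, which gives
\[
\left|\alpha_{\langle 2 \rangle} - \frac{p'_n}{q'_n}\right|_2 = 2^{\,2e_n + v_2(a_{n+1}(\alpha))} = \frac{2^{2e_n}}{|a_{n+1}(\alpha)|_2}.
\]
The factors $2^{\pm 2e_n}$ cancel and the stated bound $\frac{2}{(1+\tau(n))|a_{n+1}(\alpha)|_2|q'_n|^2}$ falls out; this half is pure bookkeeping.

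For the lower bound, let $g(x,y) = ax^2 + bxy + cy^2$ be the homogenization of the primitive minimal polynomial of $\alpha$ over $\mathbb{Z}$ (with $a>0$), so that $g(x,y) = a(x-\alpha y)(x-\bar{\alpha}y)$ in $K$. Evaluating at $(p'_n,q'_n)$ and applying $\sigma_\infty$ and $\sigma_2$, and using $|q'_n|_2 = 1$, one gets
\[
\left|\alpha_\infty - \frac{p'_n}{q'_n}\right|\left|\alpha_{\langle 2 \rangle} - \frac{p'_n}{q'_n}\right|_2 = \frac{|g(p'_n,q'_n)|\,|g(p'_n,q'_n)|_2}{a\,|a|_2\,|q'_n|^2\,\left|\frac{p'_n}{q'_n}-\bar{\alpha}_\infty\right|\,\left|\frac{p'_n}{q'_n}-\bar{\alpha}_{\langle 2 \rangle}\right|_2}.
\]
Since $\alpha \notin \mathbb{Q}$, the integer $g(p'_n,q'_n)$ is nonzero, so $|g(p'_n,q'_n)|\,|g(p'_n,q'_n)|_2 = \prod_{p\neq 2}p^{v_p(g(p'_n,q'_n))}\ge 1$. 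It then remains to bound the two conjugate factors in the denominator uniformly in $n$: $|p'_n/q'_n - \bar{\alpha}_\infty| \le 1 + |\alpha_\infty - \bar{\alpha}_\infty|$ follows from Lemma~\ref{convergentsp2adicda} (or Theorem~\ref{inftyp_nq_n}), and $|p'_n/q'_n - \bar{\alpha}_{\langle 2 \rangle}|_2$ is bounded because Theorem~\ref{conver} gives $p_n/q_n \to \alpha_{\langle 2 \rangle}$ in $\mathbb{Q}_2$, so $|p_n/q_n - \bar{\alpha}_{\langle 2 \rangle}|_2 \le \max\{\sup_m|p_m/q_m - \alpha_{\langle 2 \rangle}|_2,\ |\alpha_{\langle 2 \rangle} - \bar{\alpha}_{\langle 2 \rangle}|_2\}$. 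Choosing $C$ to be $1/(a|a|_2)$ divided by the product of these two uniform bounds yields $C/|q'_n|^2$.

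The only non-routine ingredient is the product-formula estimate $|g(p'_n,q'_n)|\,|g(p'_n,q'_n)|_2\ge 1$ for the nonzero rational integer $g(p'_n,q'_n)$: this is exactly what converts the simultaneous good approximation of $\alpha$ in $\mathbb{R}$ and $\mathbb{Q}_2$ into a two-sided bound and explains why the exponents must sum to $2$. The point to be careful about is that $|p'_n/q'_n - \bar{\alpha}_{\langle 2 \rangle}|_2$ really does stay bounded and positive — which relies on $\bar{\alpha}_{\langle 2 \rangle}$ being irrational and on the convergents converging $2$-adically to $\alpha_{\langle 2 \rangle}$ rather than to its conjugate.
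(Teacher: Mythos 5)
Your proof is correct, and while the upper bound is obtained exactly as in the paper (multiplying Lemma~\ref{boundf} by Lemma~\ref{boundf2} and cancelling the powers of $2$ coming from the normalization $q'_n=2^{-v_2(q_n)}q_n$), your lower bound takes a genuinely more direct route. The paper instead combines the exact identity (\ref{q2nleft}) with Lemma~\ref{boundf2} to write the product as $\bigl(|q'_n|^2|a_{n+1}(\alpha)|_2\bigl|\tfrac{q_{n-1}}{q_n}+a_{n+1}(\alpha)+(\alpha_{(n+2)})_{\infty}\bigr|\bigr)^{-1}$ and then invokes Lemma~\ref{boundpf} (uniform bound on $|a_{n+1}(\alpha)|_2$) and Lemma~\ref{thereexistsC} (uniform bound on the bracket); you bypass both of these lemmas by applying the product-formula estimate $|N|\,|N|_2\ge 1$ directly to the nonzero integer $N=g(p'_n,q'_n)$ and bounding the two conjugate factors via Lemma~\ref{convergentsp2adicda} and Theorem~\ref{conver}. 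The underlying mechanism is the same — indeed the paper's proof of Lemma~\ref{thereexistsC} contains essentially your computation, performed in $\mathbb{Z}[\tfrac12]$ rather than $\mathbb{Z}$ — but your version is shorter and self-contained for this theorem, at the cost of not producing the intermediate facts (boundedness of $|a_{n}(\alpha)|_2$ and of the bracket) that the paper reuses elsewhere (e.g.\ in Lemma~\ref{alphainKda}, Theorem~\ref{Lagrange} and Theorem~\ref{ntoinfty|q_n|infty}). Two cosmetic points: your argument yields the lower bound with ``$\ge$'' for your explicit constant, so shrink $C$ slightly to get the stated strict inequality; and in your closing caveat, only an upper bound on $|p'_n/q'_n-\bar{\alpha}_{\langle 2\rangle}|_2$ is actually needed (its positivity is automatic since $p'_n/q'_n$ is rational and $\bar{\alpha}$ is not), so the worry about the convergents approaching the conjugate is immaterial.
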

\begin{proof}
Let $n\in\mathbb{Z}_{>0}$.
From Lemma \ref{boundf}, we have
\begin{align*}
&\left|\alpha_{\infty}-\dfrac{p_n}{q_n}\right|< \dfrac{2}{(1+\tau(n))|q_n|^2},
\end{align*}
which implies 
\begin{align}\label{qboundf}
\left|\alpha_{\infty}-\dfrac{p'_n}{q'_n}\right|< \dfrac{2\cdot2^{-2\sum_{s=1}^{n}v_2(a_{(s)}(\alpha)) }}{(1+\tau(n))|q'_n|^2}.
\end{align}
From this inequality and Lemma \ref{boundf2}, we obtain the following.
\begin{align*}
&\left|\alpha_{\infty}-\dfrac{p'_n}{q'_n}\right|\left|\alpha_{\langle 2 \rangle}-\dfrac{p'_n}{q'_n}\right|_2
<\dfrac{2}{(1+\tau(n))|a_{n+1}(\alpha)|_2|q'_n|^2}.
\end{align*}
Thus, we have the right hand side of the equality of the claim of the theorem.

From the equation (\ref{q2nleft}) and Lemma \ref{boundf2}, we have
\begin{align*}
&\left|\alpha_{\infty}-\dfrac{p'_n}{q'_n}\right|\left|\alpha_{\langle 2 \rangle}-\dfrac{p'_n}{q'_n}\right|_2=\dfrac{1}{|q'_n|^2|a_{n+1}(\alpha)|_2\left|\dfrac{q_{n-1}}{q_n}+a_{n+1}(\alpha)+(\alpha_{(n+2)})_{\infty}\right|}.
\end{align*}
Therefore, from Lemma \ref{boundpf} and \ref{thereexistsC},
we have the left hand side of the equality of the claim of the theorem. 
\end{proof}

\begin{cor}\label{qualityofapproximation2-1}
Let $\alpha\in K/\mathbb{Q}$ with property $I$.
Then, for all $n\in\mathbb{Z}_{>0}$,  
\begin{align*}
\left|\alpha_{\infty}-\dfrac{p'_n}{q'_n}\right|
\left|\alpha_{\langle 2 \rangle}-\dfrac{p'_n}{q'_n}\right|_2
< \dfrac{1}{|q'_n|^{2}}.
\end{align*}
\end{cor}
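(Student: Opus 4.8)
The plan is to obtain Corollary~\ref{qualityofapproximation2-1} as an immediate consequence of the upper bound in Theorem~\ref{qualityofapproximation}, which states that
\[
\left|\alpha_{\infty}-\frac{p'_n}{q'_n}\right|\left|\alpha_{\langle 2 \rangle}-\frac{p'_n}{q'_n}\right|_2 < \frac{2}{(1+\tau(n))\,|a_{n+1}(\alpha)|_2\,|q'_n|^2}.
\]
Hence it suffices to prove that $(1+\tau(n))\,|a_{n+1}(\alpha)|_2 \geq 2$ for every $n\in\mathbb{Z}_{>0}$; the desired strict inequality $<\tfrac{1}{|q'_n|^2}$ then follows at once.

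For the bound on $(1+\tau(n))\,|a_{n+1}(\alpha)|_2$ I would split according to the parity of $n$ and invoke Lemma~\ref{lemord2}, which gives $v_2(a_{m}(\alpha)) = -v_2((\alpha_{(m)})_{\langle 2 \rangle})$ with $v_2((\alpha_{(m)})_{\langle 2 \rangle}) \geq 0$ for even $m$ and $v_2((\alpha_{(m)})_{\langle 2 \rangle}) > 0$ for odd $m$. If $n$ is even, then $m=n+1$ is odd, so $v_2(a_{n+1}(\alpha)) < 0$; since $a_{n+1}(\alpha)\in\mathbb{Z}[\frac{1}{2}]$, its $2$-adic valuation is an integer, so $v_2(a_{n+1}(\alpha))\leq -1$, whence $|a_{n+1}(\alpha)|_2 \geq 2$, and as $\tau(n)=0$ we get $(1+\tau(n))|a_{n+1}(\alpha)|_2 = |a_{n+1}(\alpha)|_2 \geq 2$. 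If $n$ is odd, then $m=n+1$ is even, so $v_2(a_{n+1}(\alpha)) \leq 0$, i.e. $|a_{n+1}(\alpha)|_2 \geq 1$, and since $\tau(n)=1$ we get $(1+\tau(n))|a_{n+1}(\alpha)|_2 = 2|a_{n+1}(\alpha)|_2 \geq 2$. In either case the claimed bound holds.

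There is essentially no obstacle here: the statement is simply a clean normalization of Theorem~\ref{qualityofapproximation}, the only additional input being the elementary $2$-adic valuation bounds on the partial quotients from Lemma~\ref{lemord2} and the integrality of valuations on $\mathbb{Z}[\frac{1}{2}]$. To finish, I would just combine the two inequalities to obtain
\[
\left|\alpha_{\infty}-\frac{p'_n}{q'_n}\right|\left|\alpha_{\langle 2 \rangle}-\frac{p'_n}{q'_n}\right|_2 < \frac{2}{(1+\tau(n))\,|a_{n+1}(\alpha)|_2\,|q'_n|^2} \leq \frac{1}{|q'_n|^2},
\]
which is precisely the assertion.
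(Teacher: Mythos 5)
Your proposal is correct and follows essentially the same route as the paper: it deduces the corollary from the upper bound in Theorem~\ref{qualityofapproximation} by the same parity case-analysis showing $(1+\tau(n))\,|a_{n+1}(\alpha)|_2\geq 2$, with the valuation bounds from Lemma~\ref{lemord2}. The only difference is that you spell out explicitly the appeal to Lemma~\ref{lemord2} and the integrality of $v_2$ on $\mathbb{Z}[\frac{1}{2}]$, which the paper leaves implicit.
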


\begin{proof}
Since if $n$ is even, then $\tau(n)=0$ and $|a_{n+1}(\alpha)|_2\geq 2$ and  
if $n$ is odd, then $\tau(n)=1$ and $|a_{n+1}(\alpha)|_2\geq 1$, 
we have $\dfrac{2}{(1+\tau(n))|a_{n+1}(\alpha)|_2}\leq 1$.
Thus, from Theorem \ref{qualityofapproximation}, we have  the claim of the corollary.
\end{proof}

\begin{defn}\label{convergents2}
Let $\alpha\in K/\mathbb{Q}$ and $\{\frac{p_n}{q_n}\}$ be its convergents related to Algorithm (Definition \ref{algol1}).
For $n\in \mathbb{Z}_{> 0}$, $p'_n\in\mathbb{Z}$ and $q'_n\in\mathbb{Z}_{> 0}$ are defined by
\begin{align*}
p'_n=\begin{cases}
2^{-(\sum_{s=1}^{n}v_2(a_{(s)}(\alpha))}p_n& \text{if }
v_2(a_0(\alpha))>0,\\
2^{-(\sum_{s=0}^{n}v_2(a_{(s)}(\alpha))}p_n& \text{if }
v_2(a_0(\alpha))\leq 0,\\
\end{cases}\\
q'_n=\begin{cases}
2^{-(\sum_{s=1}^{n}v_2(a_{(s)}(\alpha))}q_n& \text{if }
v_2(a_0(\alpha))>0,\\
2^{-(\sum_{s=0}^{n}v_2(a_{(s)}(\alpha))}q_n& \text{if }
v_2(a_0(\alpha))\leq 0.
\end{cases}
\end{align*}
\end{defn} 

We remark that for $n\in \mathbb{Z}_{> 0}$ $\dfrac{p'_n}{q'_n}=\dfrac{p_n}{q_n}$ and $(p'_n,q'_n)=1$.
The following corollary follows from Theorem \ref{qualityofapproximation}. We will leave the proof to the reader.

\begin{cor}\label{qualityofapproximation2-2}
Let $\alpha\in K/\mathbb{Q}$ and $\{\frac{p_n}{q_n}\}$ be its convergents related to Algorithm (Definition \ref{algol1}).
There exist $C\in\mathbb{R}_{>0}$such that 
for all $n\in\mathbb{Z}_{\geq 0}$,  
\begin{align*}
\dfrac{C}{|q'_n|^{2}}<\left|\alpha_{\infty}-\dfrac{p'_n}{q'_n}\right|
\left|\alpha_{\langle 2 \rangle}-\dfrac{p'_n}{q'_n}\right|_2
< \dfrac{2\max\{1,(|b_{0}(\alpha)|_2)^2\}}{(1+\tau(n))|b_{n+1}(\alpha)|_2|q'_n|^2}.
\end{align*}
\end{cor}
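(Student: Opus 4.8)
The plan is to reduce the statement to Theorem~\ref{qualityofapproximation} by passing to $\beta:=\alpha-b_0(\alpha)$, and then to bookkeep the powers of $2$ that appear when the two normalisations of the convergents are compared. Since $\alpha\in K/\mathbb{Q}$, also $\beta\in K/\mathbb{Q}$, and by Lemma~\ref{ineq1} together with $v_2(\alpha-b_0(\alpha))>0$ the element $\beta$ has property $I$. As in the proof of Corollary~\ref{inftyp_nq_ncor}, Algorithm~(Definition~\ref{algol1}) applied to $\alpha$ and Algorithm~(Definition~\ref{algol2}) applied to $\beta$ are linked by $b_k(\alpha)=a_k(\beta)$ for $k\ge1$ and by $\frac{p_n}{q_n}=b_0(\alpha)+\frac{\overline p_n}{\overline q_n}$ for $n\ge1$, where $\frac{\overline p_n}{\overline q_n}$ denotes the $n$-th convergent of $\beta$ from Definition~\ref{convergents}. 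Hence, interpreting both sides inside $\mathbb{R}$ (resp.\ inside $\mathbb{Q}_2$),
\begin{align*}
\alpha_{\infty}-\frac{p_n}{q_n}=\beta_{\infty}-\frac{\overline p_n}{\overline q_n},\qquad
\alpha_{\langle 2\rangle}-\frac{p_n}{q_n}=\beta_{\langle 2\rangle}-\frac{\overline p_n}{\overline q_n},
\end{align*}
so the product appearing in the corollary is exactly the analogous product for $\beta$.

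Let $\tilde p_n,\tilde q_n$ be the coprime integral convergents of $\beta$ furnished by Definition~\ref{convergents3} applied to $\beta$, so that $\frac{\tilde p_n}{\tilde q_n}=\frac{\overline p_n}{\overline q_n}=\frac{p_n}{q_n}-b_0(\alpha)$ and $\frac{p'_n}{q'_n}=\frac{p_n}{q_n}$. Theorem~\ref{qualityofapproximation} applied to $\beta$, combined with $|a_{n+1}(\beta)|_2=|b_{n+1}(\alpha)|_2$, yields a constant $C_\beta>0$ such that
\begin{align*}
\frac{C_\beta}{|\tilde q_n|^{2}}<\left|\alpha_{\infty}-\frac{p_n}{q_n}\right|\left|\alpha_{\langle 2\rangle}-\frac{p_n}{q_n}\right|_2<\frac{2}{(1+\tau(n))\,|b_{n+1}(\alpha)|_2\,|\tilde q_n|^{2}}
\end{align*}
for all $n\ge1$; it remains only to pass from $|\tilde q_n|$ to $|q'_n|$ at both ends.

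For this comparative step I would use that $v_2(\tilde q_n)=0$, i.e.\ $\tilde q_n$ is odd (immediate from Lemma~\ref{convergentsp2adic} and Definition~\ref{convergents3}), while $b_0(\alpha)\in\mathbb{Z}[\tfrac12]$ has denominator $2^{\ell}$ with $\ell:=\max\{0,-v_2(b_0(\alpha))\}$, so $\max\{1,|b_0(\alpha)|_2\}=2^{\ell}$. Writing $b_0(\alpha)=c_0/2^{\ell}$ with $c_0\in\mathbb{Z}$, one gets $\frac{p_n}{q_n}=\frac{2^{\ell}\tilde p_n+c_0\tilde q_n}{2^{\ell}\tilde q_n}$, and a one-line computation (using $\gcd(\tilde p_n,\tilde q_n)=1$ and $\tilde q_n$ odd) shows $\gcd(2^{\ell}\tilde p_n+c_0\tilde q_n,\,2^{\ell}\tilde q_n)\mid 2^{\ell}$; hence $|q'_n|=2^{\ell}|\tilde q_n|=\max\{1,|b_0(\alpha)|_2\}\,|\tilde q_n|$ for $n\ge1$, and in particular $|\tilde q_n|\le|q'_n|\le\max\{1,|b_0(\alpha)|_2\}\,|\tilde q_n|$. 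Feeding the left inequality into the upper bound and the right inequality into the lower bound of the previous display gives precisely the asserted chain of inequalities for all $n\ge1$, with $C:=C_\beta$.

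Finally the case $n=0$ is handled directly: here $\frac{p'_0}{q'_0}=b_0(\alpha)$, $q'_0=\max\{1,|b_0(\alpha)|_2\}$, $\tau(0)=0$, and $\alpha_{\langle 2\rangle}-b_0(\alpha)=\beta_{\langle 2\rangle}$ with $|\beta_{\langle 2\rangle}|_2=1/|b_1(\alpha)|_2$ (since $v_2(b_1(\alpha))=-v_2(\beta_{\langle 2\rangle})$ by Lemma~\ref{lemord}), while $|\alpha_{\infty}-b_0(\alpha)|\le1$ by Lemma~\ref{ineq1}; this makes the right-hand inequality immediate, and the left-hand one holds after possibly shrinking $C$, since for $n=0$ the middle term is a fixed positive number ($\alpha\notin\mathbb{Q}$). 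The step I expect to need the most care is the power-of-$2$ bookkeeping relating $|q'_n|$ to $|\tilde q_n|$: one must exploit the oddness of $\tilde q_n$ to see that translating by $b_0(\alpha)$ scales the reduced denominator by exactly $2^{\ell}$, which is what makes the factor $\max\{1,(|b_0(\alpha)|_2)^{2}\}$ in the statement correct; everything else is a transcription of Theorem~\ref{qualityofapproximation}.
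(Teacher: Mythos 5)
Your proposal is correct and is essentially the argument the paper intends (it leaves the proof to the reader as a reduction to Theorem~\ref{qualityofapproximation}): you pass to $\beta=\alpha-b_0(\alpha)$, which has property $I$, use $b_k(\alpha)=a_k(\beta)$ and $\tfrac{p_n}{q_n}=b_0(\alpha)+\tfrac{\overline p_n}{\overline q_n}$, and then track the factor $\max\{1,|b_0(\alpha)|_2\}$ relating the two normalized denominators. Note only that the relation $|q'_n|=\max\{1,|b_0(\alpha)|_2\}\,|\tilde q_n|$ can be read off directly from Definition~\ref{convergents2} and Definition~\ref{convergents3} (the two formulas differ exactly by the factor $2^{-v_2(b_0(\alpha))}$ when $v_2(b_0(\alpha))\leq 0$), so your gcd bookkeeping, which as written only yields that the gcd divides $2^{\ell}$ and needs the oddness of the numerator to conclude equality, can be bypassed.
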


The following theorem provides a simultaneous rational approximation to a number in $K$
 in both $\mathbb{R}$ and $\mathbb{Q}_2$, making it one of the main results.

\begin{thm}\label{qualityofapproximation3}
Let $\alpha\in K/\mathbb{Q}$ with property $I$.
There exist $\gamma, C, C' \in\mathbb{R}_{>0}$ with $0<\gamma<1$ 
such that for all $n\in \mathbb{Z}_{>0}$, 
\begin{align*}
\left|\alpha_{\infty}-\dfrac{p'_n}{q'_n}\right|\leq \dfrac{C}{|q'_n|^{2-2\gamma}} \text{ and }
\left|\alpha_{\langle 2 \rangle}-\dfrac{p'_n}{q'_n}\right|_2\leq \dfrac{C'}{|q'_n|^{2\gamma}}.
\end{align*}
\end{thm}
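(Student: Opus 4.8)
The plan is to derive both inequalities from a single two-sided comparison. Put $m_n:=-v_2(q_n)=\sum_{k=1}^n|v_2(a_k(\alpha))|$ (Lemma \ref{convergentsp2adic}, using $v_2(a_k(\alpha))\le 0$). By Definition \ref{convergents3} one has $q'_n=2^{m_n}q_n$, hence $|q'_n|=2^{m_n}|q_n|$. Substituting $q_n=2^{-m_n}q'_n$ into Lemma \ref{boundf} gives $|\alpha_{\infty}-p'_n/q'_n|<2\cdot 2^{2m_n}/((1+\tau(n))|q'_n|^{2})\le 2\cdot 2^{2m_n}/|q'_n|^{2}$, while Lemma \ref{boundf2} together with $v_2(a_{n+1}(\alpha))\le 0$ (Lemma \ref{lemord2}) gives $|\alpha_{\langle 2 \rangle}-p'_n/q'_n|_2=2^{v_2(a_{n+1}(\alpha))-2m_n}\le 2^{-2m_n}$. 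Thus if we produce $\gamma\in(0,1)$ and $c_1,c_2>0$ with $c_1|q'_n|^{\gamma}\le 2^{m_n}\le c_2|q'_n|^{\gamma}$ for all $n$, the first estimate becomes $\le 2c_2^2/|q'_n|^{2-2\gamma}$ and the second $\le c_1^{-2}/|q'_n|^{2\gamma}$, which (since $p'_n/q'_n=p_n/q_n$) are the asserted bounds. Taking base-$2$ logarithms, $\log_2|q'_n|=m_n+\log_2|q_n|$, so the required comparison holds as soon as $m_n=\lambda n+O(1)$ and $\log_2|q_n|=\nu n+O(1)$ with $\lambda,\nu>0$: set $\delta:=\nu/\lambda$ and $\gamma:=1/(1+\delta)\in(0,1)$, so that $\gamma\log_2|q'_n|=\gamma(1+\delta)m_n+O(1)=m_n+O(1)$, i.e. $2^{m_n}/|q'_n|^{\gamma}$ is bounded above and below for all large $n$ (the finitely many small $n$ being absorbed into $c_1,c_2$). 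Note a uniform two-sided bound on the ratio $\log_2|q_n|/m_n$ alone would \emph{not} suffice, since the two target inequalities together force that ratio to converge; the $O(1)$-control is essential.

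The linear growth of $m_n$ comes from eventual periodicity. By Theorem \ref{Lagrange} there are $n_1<n_2$ with $n_1\equiv n_2\bmod 2$ and $\alpha_{(n_1)}=\alpha_{(n_2)}$; since $a_n(\alpha)$ and $\alpha_{(n+1)}$ depend only on $\alpha_{(n)}$ and the parity of $n$, both sequences are periodic for $n\ge n_1$ with even period $L:=n_2-n_1$. Hence $\{a_n(\alpha)\}_{n\ge 1}$ is finite and $m_n=\tfrac{S}{L}\,n+O(1)$ with $S:=\sum_{j=1}^L|v_2(a_{n_1+j}(\alpha))|\ge L/2>0$, the inequality because exactly $L/2$ of the indices in a period are odd and each contributes at least $1$ by Lemma \ref{lemord2}.

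The linear growth of $\log_2|q_n|$ is the main point. From Definition \ref{convergents}, along each residue class modulo $L$ the pair $(q_{m-1},q_m)$ is obtained from its value at $m=n_1+r$ by repeatedly multiplying on the right by the one-period transfer matrix, which for every $r$ is conjugate to $M:=\prod_{j=1}^{L}\begin{pmatrix}0&1\\1&a_{n_1+j}(\alpha)\end{pmatrix}\in SL_2(\mathbb{Q})$ (here $\det M=(-1)^L=1$). Iterating $\alpha_{(m)}=1/(\alpha_{(m+1)}+a_m(\alpha))$ over one period and using $\alpha_{(n_1+L+1)}=\alpha_{(n_1+1)}$ shows $\alpha_{(n_1+1)}$ is a fixed point of the M\"obius map of $M$; since $\alpha_{(n_1+1)}\in K/\mathbb{Q}$ is an irrational quadratic number and the fixed-point equation has coefficients in $\mathbb{Z}[\frac{1}{2}]\subset\mathbb{Q}$, the two fixed points of $M$ are the distinct real numbers $\sigma_{\infty}(\alpha_{(n_1+1)})$ and $\sigma_{\infty}(\overline{\alpha_{(n_1+1)}})$. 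Hence $M$ is hyperbolic: it is not $\pm$unipotent (whose fixed point is rational), not elliptic (whose fixed points are non-real), and not $\pm I$ (since $|q_n|\to\infty$ by Theorem \ref{ntoinfty|q_n|infty}); so $M$ has a real eigenvalue $\mu$ with $|\mu|>1$. By Cayley--Hamilton, $q_{n_1+r-1}$ and $q_{n_1+r+kL}$, viewed as functions of $k$, satisfy $x_{k+2}=(\operatorname{tr}M)x_{k+1}-x_k$, so $q_{n_1+r+kL}=A_r\mu^{k}+B_r\mu^{-k}$; as $q_n\to\infty$ forces $A_r\ne 0$ for every $r$, we obtain $\log_2|q_n|=\tfrac{\log_2|\mu|}{L}\,n+O(1)$ with $\tfrac{\log_2|\mu|}{L}>0$. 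Feeding $\lambda=S/L$ and $\nu=\tfrac{\log_2|\mu|}{L}$ into the first paragraph completes the proof.

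\textbf{Main difficulty.} The crux is the hyperbolicity of the one-period matrix $M$, i.e. the genuinely exponential growth of $|q_n|$; this is what forces the argument through Theorem \ref{Lagrange} and an explicit analysis of $M$, and the degenerate cases must be excluded using two facts already in the paper — that the tail $\alpha_{(n_1+1)}$ is an irrational quadratic element (killing the parabolic and elliptic cases) and that $|q_n|\to\infty$ (Theorem \ref{ntoinfty|q_n|infty}, killing $M=\pm I$ and guaranteeing $A_r\ne 0$). A subsidiary but indispensable point is that both linear growths be controlled up to an $O(1)$ error, not merely up to the rate, so that one exponent $\gamma$ works uniformly in $n$; this uniformity is exactly what eventual periodicity, rather than mere boundedness of the partial quotients, provides.
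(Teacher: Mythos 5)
Your proposal is correct and follows essentially the same route as the paper: eventual periodicity (Theorem \ref{Lagrange}), the dominant eigenvalue of the one-period transfer matrix to get exponential growth of $|q_n|$, the exact power of $2$ in the denominators via Lemma \ref{convergentsp2adic}, and the bounds of Lemmas \ref{boundf} and \ref{boundf2}, with your exponent $\gamma=\lambda/(\lambda+\nu)$ coinciding with the paper's $\gamma=\log\xi_1/\log|\lambda_1\xi_1|$. The only difference is presentational (linear-growth-with-$O(1)$ bookkeeping and Cayley--Hamilton instead of an explicit eigenvector decomposition), and you in fact supply justifications---hyperbolicity of $M$ and nonvanishing of the leading coefficients via Theorem \ref{ntoinfty|q_n|infty}---that the paper only asserts.
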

\begin{proof}
Let $l\in \mathbb{Z}_{>0}$ be the length of the period of $\{\alpha_{(n)}\}$.
We assume that 
for $n_1, n_2\in \mathbb{Z}_{>0}$ with  $n_1<n_2$ such that
$\alpha_{(n_1)}=\alpha_{(n_2)}$, $n_1\equiv n_2 \mod\ 2$, and
$n_2-n_1=l$.
Let $0\leq i\leq l-1$.
Let $n>n_2$ for $n\in \mathbb{Z}_{>0}$ with $n\equiv i \mod l$.
We take integers $j_1$ and $k$ such that $n_1 \leq j_1 < n_2$ and $n = j_1 + kl$.
We put 
\begin{align*}
M=\begin{pmatrix}0&1\\1&a_{j_1+1}(\alpha)\end{pmatrix}\begin{pmatrix}0&1\\1&a_{j_1+2}(\alpha)\end{pmatrix}\ldots \begin{pmatrix}0&1\\1&a_{j_1+l}(\alpha)\end{pmatrix}
.
\end{align*}
Since ${}^t(\alpha_{(j_1+1)},1)$ and ${}^t(\overline{\alpha_{(j_1+1)}},1)$ are eigenvectors of the matrix $M$, 
its eigenvalues $\lambda_1, \lambda_2$ belong to $K/\mathbb{Q}$.
For the sake of simplicity, we denote $(\lambda_i)_{\infty}$ by $\lambda_i$ for $i=1,2$.
Since $|\lambda_1\lambda_2|=1$, we can assume without loss of generality that $|\lambda_1|>1$ and $|\lambda_2|<1$.
We note that $\overline{\lambda_1}=\lambda_2$.
From the fact that
\begin{align*}
\begin{pmatrix}
p_{n-1}&p_{n}\\
q_{n-1}&q_{n}
\end{pmatrix}
=
\begin{pmatrix}
0&1\\
1&a_1(\alpha)
\end{pmatrix}
\cdots
\begin{pmatrix}
0&1\\
1&a_{j_1}(\alpha)
\end{pmatrix}
M^k,
\end{align*}
there exist $\delta_1, \delta_2\in K$ with $\delta_1\delta_2\ne 0$ such that
 $p_{n}=\delta_1\lambda^k_1+ \overline{\delta_1} \overline{\lambda_1}^k$ and 
$q_{n}=\delta_2\lambda^k_1+ \overline{\delta_2} \overline{\lambda_1}^k$.
Now, we put
\begin{align*}
&\xi_0=2^{-\sum_{s=1}^{j_1}v_2(a_{(s)}(\alpha))},\\
&\xi_1=2^{-\sum_{s=j_1+1}^{j_1+l}v_2(a_{(s)}(\alpha))},\\
&\gamma=\frac{\log \xi_1}{\log |\lambda_1\xi_1|}.
\end{align*}
We remark that $\lambda_1, \xi_1$ and $\gamma$ are constants independent of $n$. 
Since $|\lambda_1|>1$ and $\xi_1>1$, we see that $0<\gamma<1$.
From the inequality (\ref{qboundf}), we have 
\begin{align}\label{left|alphainfty}
\left|\alpha_{\infty}-\dfrac{p'_n}{q'_n}\right|< \dfrac{2(\xi_0\xi_1^k)^2}{(1+\tau(n))|q'_n|^2}
=\dfrac{2\xi_0^2|\lambda_1\xi_1|^{2k\gamma}}{(1+\tau(n))|q'_n|^2}.
\end{align}
Since
\begin{align*}
\lim_{k\to \infty}\dfrac{|q'_n|^{\gamma}}{|\lambda_1\xi_1|^{k\gamma}}
=\lim_{k\to \infty}\dfrac{|\xi_0\xi_1^k\delta_2\lambda^k_1+ \xi_0\xi_1^k\overline{\delta_2} \overline{\lambda_1}^k|^{\gamma}}{|\lambda_1\xi_1|^{k\gamma}}=|\xi_0\delta_2|^{\gamma},
\end{align*}
for sufficiently large $k$, we have
\begin{align}\label{|qngamma}
2|\xi_0\delta_2|^{\gamma}|\lambda_1\xi_1|^{k\gamma}>
|q'_n|^{\gamma}>\dfrac{|\xi_0\delta_2|^{\gamma}}{2}|\lambda_1\xi_1|^{k\gamma}.
\end{align} 
From the inequalities (\ref{left|alphainfty}) and (\ref{|qngamma}), 
for sufficiently large $k$, we have
\begin{align*}
\left|\alpha_{\infty}-\dfrac{p'_n}{q'_n}\right|< 
\dfrac{8\xi_0^2}{|\xi_0\delta_2|^{2\gamma}|q'_n|^{2-2\gamma}}.
\end{align*}
From Lemma \ref{boundf2} and the inequality (\ref{|qngamma}), for sufficiently large $k$, we have
\begin{align*}
\left|\alpha_{\langle 2 \rangle}-\dfrac{p'_n}{q'_n}\right|_2
\leq\dfrac{1}{\xi_0^2\xi_1^{2k}}=\dfrac{1}{\xi_0^2|\lambda_1\xi_1|^{2k\gamma}}
<\dfrac{4|\xi_0\delta_2|^{2\gamma}}{\xi_0^2|q'_n|^{2\gamma}}.
\end{align*}
Thus, we have the claim of the theorem.
\end{proof}

\begin{cor}\label{qualityofapproximation2-3}
Let $\alpha\in K/\mathbb{Q}$ and $\{\frac{p_n}{q_n}\}$ be its convergents related to Algorithm (Definition \ref{algol1}).
Let $l\in \mathbb{Z}_{>0}$ be the length of the period of $\{\alpha_{n}\}$.
There exist $\gamma, C, C' \in\mathbb{R}_{>0}$ with $0<\gamma<1$ 
such that for $n\in \mathbb{Z}_{>0}$,
\begin{align*}
\left|\alpha_{\infty}-\dfrac{p'_n}{q'_n}\right|\leq \dfrac{C}{|q'_n|^{2-2\gamma}} \text{ and }
\left|\alpha_{\langle 2 \rangle}-\dfrac{p'_n}{q'_n}\right|_2\leq \dfrac{C'}{|q'_n|^{2\gamma}}.
\end{align*}
\end{cor}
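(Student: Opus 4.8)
The plan is to reduce the statement to Theorem \ref{qualityofapproximation3}, which already supplies the required bounds for elements with property $I$, by subtracting the ``integer part'' $b_0(\alpha)$. Put $\beta:=\alpha-b_0(\alpha)$. As in the proof of Corollary \ref{inftyp_nq_ncor}, $\beta$ has property $I$: by Lemma \ref{ineq1} we have $\beta_{\infty}=\alpha_{\infty}-b^{(0)}(\alpha)\in(-1,1]$, and since $b_0(\alpha)=b^{(0)}(\alpha)$ one gets $v_2(\beta_{\langle 2\rangle})=v_2\big((\alpha-b^{(0)}(\alpha))_{\langle 2\rangle}\big)>0$; moreover $\beta\in K/\mathbb{Q}$ because $b_0(\alpha)\in\mathbb{Z}[\tfrac12]$ while $\alpha\notin\mathbb{Q}$. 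Write $P_n/Q_n$ for the $n$-th convergent of $\beta$ produced by Algorithm (Definition \ref{algol2}) and $P'_n/Q'_n$ for its reduced form with $Q'_n>0$ as in Definition \ref{convergents3}. By the proof of Corollary \ref{inftyp_nq_ncor}, $p_n/q_n=b_0(\alpha)+P_n/Q_n$ for all $n\in\mathbb{Z}_{>0}$, and since $\alpha_{\infty}=\beta_{\infty}+b_0(\alpha)$ and $\alpha_{\langle 2\rangle}=\beta_{\langle 2\rangle}+b_0(\alpha)$ this gives
\begin{align*}
\left|\alpha_{\infty}-\frac{p'_n}{q'_n}\right|=\left|\beta_{\infty}-\frac{P'_n}{Q'_n}\right|,\qquad
\left|\alpha_{\langle 2\rangle}-\frac{p'_n}{q'_n}\right|_2=\left|\beta_{\langle 2\rangle}-\frac{P'_n}{Q'_n}\right|_2 .
\end{align*}

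Next I would apply Theorem \ref{qualityofapproximation3} to $\beta$: it produces $\gamma\in(0,1)$ and constants $C_0,C'_0>0$ with $|\beta_{\infty}-P'_n/Q'_n|\le C_0/|Q'_n|^{2-2\gamma}$ and $|\beta_{\langle 2\rangle}-P'_n/Q'_n|_2\le C'_0/|Q'_n|^{2\gamma}$ for all $n\in\mathbb{Z}_{>0}$. (The period $l$ of $\{\alpha_n\}$ named in the statement equals the period of $\{\beta_{(n)}\}$, since $\alpha_n=1/\beta_{(n)}$ and $b_n(\alpha)=a_n(\beta)$ for $n\ge 1$ by the same unwinding of definitions as in the proof of Corollary \ref{rational2}; this is only relevant to the bookkeeping of the statement, as $\gamma$ is existential.) It then remains to compare $q'_n$ with $Q'_n$. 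Writing $b_0(\alpha)=u/v$ in lowest terms with $v\in\mathbb{Z}_{>0}$, the reduced fraction $p'_n/q'_n=(vP'_n+uQ'_n)/(vQ'_n)$ has denominator $q'_n$ dividing $vQ'_n$, so $|q'_n|\le v\,|Q'_n|$. Combining this with the two displayed equalities,
\begin{align*}
\left|\alpha_{\infty}-\frac{p'_n}{q'_n}\right|\le\frac{C_0}{|Q'_n|^{2-2\gamma}}\le\frac{C_0\,v^{2-2\gamma}}{|q'_n|^{2-2\gamma}},\qquad
\left|\alpha_{\langle 2\rangle}-\frac{p'_n}{q'_n}\right|_2\le\frac{C'_0\,v^{2\gamma}}{|q'_n|^{2\gamma}},
\end{align*}
so the corollary follows with $C=C_0v^{2-2\gamma}$ and $C'=C'_0v^{2\gamma}$.

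I do not expect a genuine difficulty: the corollary is essentially Theorem \ref{qualityofapproximation3} transported across the affine change $\alpha\mapsto\alpha-b_0(\alpha)$. The two places that need a little care are checking that $\beta$ really has property $I$ (immediate from Lemma \ref{ineq1} and the definition of $b^{(0)}$) and reconciling the two normalizations of convergents in Definitions \ref{convergents2} and \ref{convergents3}, which is handled by the elementary fact that adding a fixed rational of denominator $v$ to a reduced fraction alters its denominator by a divisor of $v$. If one prefers not to invoke the reduction at all, an equivalent route is to rerun the argument of Theorem \ref{qualityofapproximation3} directly for $\alpha$, using the matrix recursion of Definition \ref{convergents1} and the eventual periodicity of $\{\alpha_n\}$ from Corollary \ref{Lagrange2}; the periodic-tail eigenvalue estimate is unchanged.
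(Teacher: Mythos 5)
Your reduction is correct and is exactly the argument the paper intends (the corollary is left unproved there): subtract $b_0(\alpha)$ to obtain $\beta$ with property $I$, apply Theorem \ref{qualityofapproximation3} to $\beta$, and transfer the bounds using $p_n/q_n=b_0(\alpha)+P_n/Q_n$ together with the fact that the reduced denominator $q'_n$ divides $vQ'_n$ for the fixed denominator $v$ of $b_0(\alpha)$. The property-$I$ check via Lemma \ref{ineq1} and the identification $a_n(\beta)=b_n(\alpha)$ are both handled as in Corollaries \ref{inftyp_nq_ncor} and \ref{rational2}, so no gap remains.
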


\section{Complex quadratic number}
Let $p$ be a prime number. Let $K$ be a quadratic field that has an embedding into $\mathbb{C}$ and $\mathbb{Q}_p$ respectively.
We assume that $\sigma_{\infty}$ gives an embedding into $\mathbb{C}$ and $\sigma_p$ gives an embedding into $\mathbb{Q}_p$.
We assume that $\sigma_{\infty}(K)\not\subset \mathbb{R}$, i.e., $\sigma_{\infty}(K)$ is a complex quadratic field.
Let $\alpha \in K$. We also denote  $\sigma_{\infty}(\alpha)$ by $\alpha_{\infty}$ and  $\sigma_{p}(\alpha)$ by $\alpha_{\langle p \rangle}$.
We denote the real part of $\alpha$ by $Re(\alpha)$ and the imaginary part by $Im(\alpha)$.
$F_{p,\epsilon}$ and $T_{p,\epsilon}$ for $\epsilon=0,1$ are applicable to $\alpha$, and therefore, Algorithms (Definition \ref{algol1}, Definition \ref{algol2}) are applicable to $\alpha$. We assume that various notations are the same as in the case when $K$ is a real quadratic field.
We will extend 'property $I$' slightly.
\begin{defn}\label{property Ic}
We say that $\alpha$ has property $I$, if  it satisfies $Re(\alpha)\in \left(-\frac{p}{2},\frac{p}{2}\right]$ and $v_p(\alpha_{\langle p \rangle})>0$.
\end{defn}

The following lemma can be proven similarly to Lemma \ref{boundf2}.

\begin{lem}\label{boundf3}
Let $\alpha\in K/\mathbb{Q}$ with property $I$.
For  all $n\in \mathbb{Z}_{\geq 1}$, it holds that
\begin{align*}
v_2\left(\alpha_{\langle 2 \rangle}-\dfrac{p_n}{q_n}\right)=-2v_2(q_n)-v_2(a_{n+1}(\alpha)).
\end{align*}
\end{lem}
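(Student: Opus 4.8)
The plan is to follow, essentially line for line, the proof of Lemma \ref{boundf2}. The point is that the assertion concerns only the $2$-adic valuation $v_2$, while the sole novelty of the present section is that the archimedean embedding $\sigma_\infty$ now lands in $\mathbb{C}$ rather than $\mathbb{R}$. Replacing $\mathbb{R}$ by $\mathbb{C}$ changes neither the integrality of $a^{(\epsilon)}(\cdot)$ nor its $p$-adic part: in Definitions \ref{transformation} and \ref{transformationT} the embedding $\sigma_\infty$ (through $Re(\cdot)$ in the complex setting) is used only to pin down the minimizing integer $m$, and that integer is still an integer. Consequently Lemmas \ref{lemord2} and \ref{convergentsp2adic}, Definition \ref{convergents}, and the purely algebraic identity (\ref{q2nleft}) all remain valid for $\alpha$ satisfying the extended property $I$ of Definition \ref{property Ic}. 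One also notes that for $\alpha\in K/\mathbb{Q}$ every iterate $\alpha_{(n)}$ lies in $K/\mathbb{Q}$ and is nonzero, so $p_n/q_n$ and $(\alpha_{(n+2)})_{\langle 2 \rangle}$ are well defined and the algorithm does not terminate.

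Granting this, I would proceed as follows. Reading (\ref{q2nleft}) under $\sigma_2$ and applying $v_2$ gives
\begin{align*}
v_2\left(\alpha_{\langle 2 \rangle}-\dfrac{p_n}{q_n}\right)=-2v_2(q_n)-v_2\left(\dfrac{q_{n-1}}{q_n}+a_{n+1}(\alpha)+(\alpha_{(n+2)})_{\langle 2 \rangle}\right),
\end{align*}
and by Lemma \ref{convergentsp2adic} one has $v_2(q_{n-1}/q_n)=-v_2(a_n(\alpha))$. It then remains to check that
\begin{align*}
v_2\left(\dfrac{q_{n-1}}{q_n}+a_{n+1}(\alpha)+(\alpha_{(n+2)})_{\langle 2 \rangle}\right)=v_2(a_{n+1}(\alpha)).
\end{align*}
For even $n$, Lemma \ref{lemord2} gives $v_2(a_n(\alpha))\le 0$, $v_2(a_{n+1}(\alpha))<0$, and $v_2((\alpha_{(n+2)})_{\langle 2 \rangle})\ge 0$, so among the three summands $a_{n+1}(\alpha)$ has strictly smallest valuation; for odd $n$, Lemma \ref{lemord2} gives $v_2(a_n(\alpha))<0$, $v_2(a_{n+1}(\alpha))\le 0$, and $v_2((\alpha_{(n+2)})_{\langle 2 \rangle})>0$, and again $a_{n+1}(\alpha)$ is the unique term of minimal valuation. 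In either case the ultrametric inequality yields the displayed equality, and substituting back gives the lemma.

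The only genuine thing to verify, and I expect it to be the one mildly delicate point, is the claim made in the first paragraph that none of the earlier $2$-adic statements (Lemmas \ref{lemord2}, \ref{convergentsp2adic}, the identity (\ref{q2nleft}), and the fact that the iterates stay irrational) secretly used $\sigma_\infty(K)\subset\mathbb{R}$. This is a routine inspection: $\sigma_\infty$ enters those proofs only via the selection of the minimizing integer in $F_{p,\epsilon}$ and $T_{p,\epsilon}$, which is irrelevant both to the integrality of $a^{(\epsilon)}$ and to $v_p$ of any quantity in sight. Once that is recorded, the argument is word-for-word the proof of Lemma \ref{boundf2}, so no real obstacle remains.
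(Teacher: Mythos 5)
Your proposal is correct and matches the paper's intent exactly: the paper itself only remarks that Lemma \ref{boundf3} "can be proven similarly to Lemma \ref{boundf2}", and your argument is precisely that proof, reading (\ref{q2nleft}) $2$-adically and isolating $a_{n+1}(\alpha)$ as the unique summand of minimal valuation via Lemmas \ref{lemord2} and \ref{convergentsp2adic}. Your additional check that these $2$-adic ingredients never use $\sigma_\infty(K)\subset\mathbb{R}$ (the embedding only selects the integer $m$, which stays an integer) is exactly the verification the paper leaves implicit.
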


Based on the numerical experiments, we conjecture that the sequence $\{\alpha_n\}$ will eventually become periodic
 (see section \ref{Numericalexperiments}).

\begin{thm}\label{qualityof complex}
Let $\alpha\in K/\mathbb{Q}$ with property $I$.
Let us assume that there exist $n_1, n_2\in \mathbb{Z}_{>0}$ with  $n_1<n_2$ such that
$\alpha_{(n_1)}=\alpha_{(n_2)}$ and $n_1\equiv n_2 \mod\ 2$.
Then, there exists $C\in\mathbb{R}_{>0}$ 
such that for all $n\in \mathbb{Z}_{>0}$,
\begin{align*}
\left|\alpha_{\langle 2 \rangle}-\dfrac{p'_n}{q'_n}\right|_2\leq \dfrac{C}{\max\{|p'_n|,|q'_n|\}^{2}}.
\end{align*}
\end{thm}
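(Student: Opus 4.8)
The plan is to run the argument of \thmref{qualityofapproximation3}, isolating the single place where the complex case behaves differently. By hypothesis $\{\alpha_{(n)}\}$ is eventually periodic; put $l=n_2-n_1$, which is even since $n_1\equiv n_2\bmod 2$, and note that then $\alpha_{(s+l)}=\alpha_{(s)}$ and $a_{s+l}(\alpha)=a_s(\alpha)$ for all $s\ge n_1$. Fix a residue $j_1\in\{n_1,n_1+1,\ldots,n_1+l-1\}$ and an integer $k\ge 1$, and write $n=j_1+kl$. Exactly as in \thmref{qualityofapproximation3}, set
\begin{align*}
M=\begin{pmatrix}0&1\\1&a_{j_1+1}(\alpha)\end{pmatrix}\begin{pmatrix}0&1\\1&a_{j_1+2}(\alpha)\end{pmatrix}\cdots\begin{pmatrix}0&1\\1&a_{j_1+l}(\alpha)\end{pmatrix},
\end{align*}
so that, by Definition~\ref{convergents} and periodicity, $\begin{pmatrix}p_{n-1}&p_n\\q_{n-1}&q_n\end{pmatrix}=\left(\prod_{s=1}^{j_1}\begin{pmatrix}0&1\\1&a_s(\alpha)\end{pmatrix}\right)M^k$. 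Since $\alpha_{(j_1+1)}$ is a fixed point of the order-$l$ iterate of the continued-fraction map, ${}^t(\alpha_{(j_1+1)},1)$ and ${}^t(\overline{\alpha_{(j_1+1)}},1)$ are eigenvectors of $M$; they are linearly independent because $\alpha_{(j_1+1)}\in K/\mathbb{Q}$, so $M$ is diagonalizable, its eigenvalues $\lambda_1,\lambda_2$ lie in $K$, and $\lambda_1\lambda_2=\det M=(-1)^l=1$.

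Here is the new ingredient. Because $\sigma_\infty(K)$ is a complex quadratic field, $\sigma_\infty$ carries Galois conjugation on $K$ to complex conjugation, so $|\lambda_1|_\infty=|\lambda_2|_\infty$; combined with $\lambda_1\lambda_2=1$ this forces $|\lambda_1|_\infty=|\lambda_2|_\infty=1$ (whereas in the real case of \thmref{qualityofapproximation3} one had $|\lambda_1|_\infty>1$). Writing $M=PDP^{-1}$ over $\mathbb{C}$ via $\sigma_\infty$ with $D=\mathrm{diag}(\lambda_1,\lambda_2)$, we get $\|M^k\|=\|PD^kP^{-1}\|\le\|P\|\,\|P^{-1}\|$ independently of $k$, hence $|p_n|_\infty$ and $|q_n|_\infty$ are bounded by a constant $B_{j_1}$ that does not depend on $k$.

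Now we combine with the $2$-adic side. By \lemref{convergentsp2adic} and periodicity, $-v_2(q_n)=-v_2(q_{j_1})+k\log_2\xi_1$, where $\xi_1:=2^{-\sum_{s=j_1+1}^{j_1+l}v_2(a_s(\alpha))}$; by \lemref{lemord2} every period contains $l/2$ odd indices $s$ with $v_2(a_s(\alpha))<0$ while the rest have $v_2(a_s(\alpha))\le 0$, so $\xi_1>1$. Since $p'_n=2^{-v_2(q_n)}p_n$ and $q'_n=2^{-v_2(q_n)}q_n$,
\begin{align*}
\max\{|p'_n|,|q'_n|\}=2^{-v_2(q_n)}\max\{|p_n|,|q_n|\}\le B_{j_1}\,2^{-v_2(q_{j_1})}\,\xi_1^{k}.
\end{align*}
On the other hand $p'_n/q'_n=p_n/q_n$, so by \lemref{boundf3} and $v_2(a_{n+1}(\alpha))\le 0$,
\begin{align*}
\left|\alpha_{\langle 2\rangle}-\dfrac{p'_n}{q'_n}\right|_2=2^{2v_2(q_n)+v_2(a_{n+1}(\alpha))}\le 2^{2v_2(q_n)}=2^{2v_2(q_{j_1})}\,\xi_1^{-2k}.
\end{align*}
Eliminating $\xi_1^{-2k}$ between these two estimates, the factors $2^{\pm2v_2(q_{j_1})}$ cancel and we obtain $\left|\alpha_{\langle 2\rangle}-p'_n/q'_n\right|_2\le B_{j_1}^2/\max\{|p'_n|,|q'_n|\}^2$. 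Letting $C$ be the maximum of $B_{j_1}^2$ over the finitely many residues $j_1$, and enlarging $C$ if necessary so the inequality also holds for the finitely many $n\le n_1+l$ (where the left side is a fixed positive number and $\max\{|p'_n|,|q'_n|\}\ge 1$), gives the theorem; note that $\max\{|p'_n|,|q'_n|\}\to\infty$ since $q'_n$ is a positive integer and $p'_n/q'_n\to\alpha_{\langle 2\rangle}$ is irrational, so the estimate is genuinely non-trivial.

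The main obstacle is the bound $\max\{|p'_n|,|q'_n|\}\le(\mathrm{const})\,\xi_1^k$, which hinges entirely on $|\lambda_1|_\infty=1$ (and on $M$ being diagonalizable, not a Jordan block — guaranteed by the two independent eigenvectors). In the real setting $|\lambda_1|_\infty>1$ makes $|q'_n|$ itself grow like $(|\lambda_1|_\infty\xi_1)^k$, which is precisely why only the weaker product-type bound of \thmref{qualityofapproximation3} can hold there; in the complex quadratic case the conjugacy of the eigenvalues pins $|\lambda_1|_\infty=1$, the numerators and denominators of the convergents stay bounded, and all of the growth of $p'_n$ and $q'_n$ comes from the common power of $2$, matched exactly by the $2$-adic smallness furnished by \lemref{boundf3}.
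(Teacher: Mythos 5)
Your proposal is correct and follows essentially the same route as the paper: the same periodicity matrix $M$ with eigenvectors ${}^t(\alpha_{(j_1+1)},1)$, ${}^t(\overline{\alpha_{(j_1+1)}},1)$, the same key observation that complex conjugation forces $|\lambda_1|_\infty=|\lambda_2|_\infty=1$, and the same use of Lemma \ref{boundf3} together with $v_2(q_n)=\sum_{s\le n}v_2(a_s(\alpha))$ to match the $2$-adic smallness against $\max\{|p'_n|,|q'_n|\}$. The only differences are cosmetic: you bound $|p_n|,|q_n|$ by an operator-norm estimate $\|M^k\|\le\|P\|\,\|P^{-1}\|$ where the paper writes $p_n=\delta_1\lambda_1^k+\overline{\delta_1}\,\overline{\lambda_1}^k$, $q_n=\delta_2\lambda_1^k+\overline{\delta_2}\,\overline{\lambda_1}^k$ explicitly, and you spell out the uniformization over the finitely many residues $j_1$ and small $n$, which the paper leaves implicit.
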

\begin{proof}
Let $l=n_2-n_1$.
Let $0\leq i\leq l-1$.
Let $n>n_2$ for $n\in \mathbb{Z}_{>0}$ with $n\equiv i \mod l$.
We take integers $j_1$ and $k$ such that $n_1 \leq j_1 < n_2$ and $n = j_1 + kl$.
We put 
\begin{align*}
M=\begin{pmatrix}0&1\\1&a_{j_1+1}(\alpha)\end{pmatrix}\begin{pmatrix}0&1\\1&a_{j_1+2}(\alpha)\end{pmatrix}\ldots \begin{pmatrix}0&1\\1&a_{j_1+l}(\alpha)\end{pmatrix}
\end{align*}
Since ${}^t(\alpha_{(j_1+1)},1)$ and ${}^t(\overline{\alpha_{(j_1+1)}},1)$ are eigenvectors of the matrix $M$, 
its eigenvalues $\lambda_1, \lambda_2$ belong to $K/\mathbb{Q}$.
Since $|\lambda_1\lambda_2|=1$ and $\lambda_1$ is a complex quadratic number, we have $|\lambda_1|=|\lambda_2|=1$.
We note that $\overline{\lambda_1}=\lambda_2$.
From the fact that
\begin{align*}
\begin{pmatrix}
p_{n-1}&p_{n}\\
q_{n-1}&q_{n}
\end{pmatrix}
=
\begin{pmatrix}
0&1\\
1&a_1(\alpha)
\end{pmatrix}
\cdots
\begin{pmatrix}
0&1\\
1&a_{j_1}(\alpha)
\end{pmatrix}
M^k,
\end{align*}
there exist $\delta_1, \delta_2\in K$ with $\delta_1\delta_2\ne 0$ such that
 $p_{n}=\delta_1\lambda^k_1+ \overline{\delta_1} \overline{\lambda_1}^k$ and 
$q_{n}=\delta_2\lambda^k_1+ \overline{\delta_2} \overline{\lambda_1}^k$.
Now, we put
\begin{align*}
&\xi_0=2^{-\sum_{s=1}^{j_1}v_2(a_{(s)}(\alpha))},\\
&\xi_1=2^{-\sum_{s=j_1+1}^{j_1+l}v_2(a_{(s)}(\alpha))}.
\end{align*}
From Lemma \ref{boundf3}, we have
\begin{align}\label{ineq11}
\left|\alpha_{\langle 2 \rangle}-\dfrac{p'_n}{q'_n}\right|_2<\dfrac{1}{(\xi_0\xi_1^k)^2}=\dfrac{|\delta_2\lambda^k_1+ \overline{\delta_2} \overline{\lambda_1}^k|^2}{|q'_n|^2}\leq \dfrac{|\delta_2|+|\overline{\delta_2}|}{|q'_n|^2}.
\end{align}
Similarly, we have
\begin{align}\label{ineq2}
\left|\alpha_{\langle 2 \rangle}-\dfrac{p'_n}{q'_n}\right|_2<\dfrac{1}{(\xi_0\xi_1^k)^2}=\dfrac{|\delta_1\lambda^k_1+ \overline{\delta_1} \overline{\lambda_1}^k|^2}{|p'_n|^2}\leq \dfrac{|\delta_1|+|\overline{\delta_1}|}{|p'_n|^2}.
\end{align}
Inequalities (\ref{ineq11}) and (\ref{ineq2}) lead to the theorem's conclusion.
\end{proof}

\begin{cor}\label{qualityof complex2}
Let $\alpha\in K/\mathbb{Q}$ and $\{\frac{p_n}{q_n}\}$ be its convergents related to Algorithm (Definition \ref{algol1}).
Let us assume that there exist $n_1, n_2\in \mathbb{Z}_{>0}$ with  $n_1<n_2$ such that
$\alpha_{n_1}=\alpha_{n_2}$ and $n_1\equiv n_2 \mod\ 2$.
There exists $C\in\mathbb{R}_{>0}$ 
such that for all $n\in \mathbb{Z}_{>0}$,
\begin{align*}
\left|\alpha_{\langle 2 \rangle}-\dfrac{p'_n}{q'_n}\right|_2\leq \dfrac{C}{\max\{|p'_n|,|q'_n|\}^{2}}.
\end{align*}
\end{cor}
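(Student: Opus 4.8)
The plan is to reduce to the property-$I$ case established in Theorem~\ref{qualityof complex}, in the same way that Corollary~\ref{inftyp_nq_ncor} is obtained from Theorem~\ref{conver}. Set $\beta:=\alpha-b_0(\alpha)$. Since $b_0(\alpha)=2m+\lfloor\alpha_{\langle 2\rangle}\rfloor_2$ for some $m\in\mathbb{Z}$, one has $v_2(\beta_{\langle 2\rangle})>0$ and, choosing $m$ as in Definition~\ref{transformation}, $Re(\beta_{\infty})\in(-1,1]$; hence $\beta\in K/\mathbb{Q}$ has property $I$ in the sense of Definition~\ref{property Ic} and $\beta_{\langle 2\rangle}=\alpha_{\langle 2\rangle}-b_0(\alpha)$. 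Running Algorithm (Definition~\ref{algol2}) on $\beta$ and Algorithm (Definition~\ref{algol1}) on $\alpha$, an induction analogous to the one in the proof of Corollary~\ref{rational2} gives $\beta_{(n)}=1/\alpha_n$ and $a_n(\beta)=b_n(\alpha)$ for all $n\ge 1$ (the algorithm on $\alpha$ is infinite since $\alpha$ is a quadratic irrational). In particular the hypothesis $\alpha_{n_1}=\alpha_{n_2}$, $n_1\equiv n_2\bmod 2$, becomes $\beta_{(n_1)}=\beta_{(n_2)}$, $n_1\equiv n_2\bmod 2$, so Theorem~\ref{qualityof complex} applies to $\beta$: there is $C'>0$ with $\bigl|\beta_{\langle 2\rangle}-P_n/Q_n\bigr|_2\le C'/\max\{|P_n|,|Q_n|\}^2$ for all $n\in\mathbb{Z}_{>0}$, where $P_n,Q_n$ are the integer-normalized convergents of $\beta$ from Definition~\ref{convergents3}.

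Next I would transfer this to $\alpha$. The matrix formulas for the convergents give $p_n/q_n=b_0(\alpha)+\tilde p_n/\tilde q_n$, where $\tilde p_n/\tilde q_n$ is the $n$-th (unnormalized) convergent of $\beta$, hence $p'_n/q'_n=b_0(\alpha)+P_n/Q_n$; as $b_0(\alpha)\in\mathbb{Q}$ this yields
\begin{align*}
\left|\alpha_{\langle 2\rangle}-\dfrac{p'_n}{q'_n}\right|_2=\left|\alpha_{\langle 2\rangle}-\dfrac{p_n}{q_n}\right|_2=\left|\beta_{\langle 2\rangle}-\dfrac{P_n}{Q_n}\right|_2\le\dfrac{C'}{\max\{|P_n|,|Q_n|\}^2}.
\end{align*}
It then suffices to bound $\max\{|P_n|,|Q_n|\}$ below by $c\,\max\{|p'_n|,|q'_n|\}$ for a constant $c>0$ independent of $n$. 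Writing $b_0(\alpha)=u/v$ in lowest terms (with $v$ a power of $2$), from $p'_n/q'_n=u/v+P_n/Q_n$ and $\gcd(p'_n,q'_n)=\gcd(P_n,Q_n)=1$ one gets $q'_n\mid vQ_n$, so $|q'_n|\le|v|\,|Q_n|$, and then, via the integer cofactor $vQ_n/q'_n$, also $|p'_n|\le(|u|+|v|)\max\{|P_n|,|Q_n|\}$; thus $c:=1/(|u|+|v|)$ works. Substituting and putting $C:=C'/c^2$ gives the claim for all $n\in\mathbb{Z}_{>0}$.

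The argument is essentially routine once Theorem~\ref{qualityof complex} is available; the one step that needs a little care is the final height comparison, i.e.\ checking that translating the convergents by the single fixed element $b_0(\alpha)\in\mathbb{Z}[\tfrac{1}{2}]$ changes $\max\{|p_n|,|q_n|\}$ only by a bounded multiplicative factor, together with keeping the two normalizations of Definitions~\ref{convergents2} and~\ref{convergents3} consistent. I do not expect a genuine obstacle here: the $2$-adic quality of approximation is unaffected by adding a rational, and the archimedean heights stay comparable precisely because the shift is by one fixed number.
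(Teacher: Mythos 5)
Your proposal is correct and is essentially the argument the paper intends (it states Corollary~\ref{qualityof complex2} without proof): reduce to the property-$I$ case by passing to $\beta=\alpha-b_0(\alpha)$, identify $a_n(\beta)=b_n(\alpha)$ and the convergents up to the shift by $b_0(\alpha)$, and apply Theorem~\ref{qualityof complex}. Your extra height-comparison step, showing $\max\{|p'_n|,|q'_n|\}\leq(|u|+|v|)\max\{|P_n|,|Q_n|\}$ via the divisibility $q'_n\mid vQ_n$, is sound and correctly supplies the constant $C$.
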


\section{Numerical experiments and conjectures}\label{Numericalexperiments}
We demonstrate in Table \ref{t1} that for $1 < n \leq 200$, the continued fraction expansion of $\sqrt{n}$ obtained using Algorithm (Definition \ref{algol1}) satisfies the condition $\sqrt{n}=2^m\sqrt{k}$, where $m,k\in \mathbb{Z}{\geq 0}$, and $k$ is not a square of an integer. Additionally, $(\sqrt{k})_{\langle 2 \rangle}\in \mathbb{Q}_2$ and $(\sqrt{k})_{\langle 2 \rangle} \equiv 1 \mod 8$.
In this table, let $\gamma$ be the same as that in Theorem \ref{qualityofapproximation3},
 so that $\gamma_r = 2 - 2\gamma$ and $\gamma_2 = 2\gamma$.
Similarly, we demonstrate in Table \ref{t2} that for $1 < n \leq 200$, the continued fraction expansion of $(\sqrt{-n})_{\langle 2 \rangle}$ obtained using Algorithm (Definition \ref{algol1}) satisfies the condition $\sqrt{-n}=2^m\sqrt{-k}$, where $m,k\in \mathbb{Z}{\geq 0}$, and $k$ is not a square of an integer. Additionally, $(\sqrt{-k})_{\langle 2 \rangle}\in \mathbb{Q}_2$ and $(\sqrt{-k})_{\langle 2 \rangle} \equiv 1 \mod 8$.
As seen in Table \ref{t2}, for all $1 < n \leq 200$, $(\sqrt{-n})_{\langle 2 \rangle}\in \mathbb{Q}_2$ have eventually periodic expansions. Furthermore, we have confirmed that this holds true for all $1 < n \leq 10000$, which amounts to a total of 1665 cases.
We give a following conjecture.

\begin{conj}
Let $K$ be a quadratic field that has an embedding into $\mathbb{C}$ and $\mathbb{Q}_2$ respectively.
Let $\alpha\in K/\mathbb{Q}$ and  $\{\alpha_{n}\}$ be the sequence obtained by applying Algorithm (Definition \ref{algol1}) to $\alpha$. Then, $\{\alpha_{n}\}$ becomes eventually periodic.
\end{conj}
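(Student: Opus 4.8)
The plan is to imitate the proof of Theorem~\ref{Lagrange}, reducing the eventual periodicity of $\{\alpha_n\}$ to the finiteness of the set of minimal polynomials of the iterates $\alpha_{(n)}$, and to isolate the one place where the real‑quadratic argument does not carry over. As in Corollary~\ref{rational2} and Corollary~\ref{Lagrange2}, applying Algorithm (Definition~\ref{algol1}) to $\alpha$ amounts, after the initial step, to applying Algorithm (Definition~\ref{algol2}) to $\beta=\alpha-b_0(\alpha)$, which has property $I$; since $\alpha_n=1/\alpha_{(n)}$ for $n\ge 1$ it suffices to prove that $\{\alpha_{(n)}\}$ is eventually periodic for $\alpha\in K/\mathbb{Q}$ with property $I$. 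Let $p_n(x)=a_nx^2+b_nx+c_n\in\mathbb{Z}[\tfrac12][x]$ be the minimal polynomial of $\alpha_{(n)}$ defined by the recursion~(\ref{p_n(x)}); its discriminant $d=b_n^2-4a_nc_n$ is a fixed negative integer, and $v_2(d)$ is even, since $d=a_n^2(\alpha_{(n)}-\overline{\alpha_{(n)}})^2$ gives $v_2(d)=2v_2(a_n)+2v_2((\alpha_{(n)}-\overline{\alpha_{(n)}})_{\langle 2\rangle})$ with both valuations in $\mathbb{Z}$. It is enough to show that $\{(a_n,b_n,c_n)\}_n$ is a finite set.

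The $2$-adic estimates are automatic here, and in particular no archimedean input (no analogue of Lemma~\ref{boundpf}) is required. Lemma~\ref{Letalpha2adic} is purely $2$-adic and holds verbatim, so after replacing $\alpha$ by a suitable $\alpha_{(N)}$ we may assume $(\alpha,\overline{\alpha})_{\langle 2\rangle}\in D_2^1$; then Lemma~\ref{Letalpha=} gives $(\alpha_{(n)},\overline{\alpha_{(n)}})_{\langle 2\rangle}\in D_2^{\tau(n)}$ for all $n$, and the argument of Lemma~\ref{2adicalphainK} together with Lemma~\ref{lemord2} yields $v_2((\alpha_{(n)}-\overline{\alpha_{(n)}})_{\langle 2\rangle})=v_2(a_{n-1}(\alpha))$. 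Feeding this into $d=a_n^2(\alpha_{(n)}-\overline{\alpha_{(n)}})^2$, $b_n=-a_n(\alpha_{(n)}+\overline{\alpha_{(n)}})$, $c_n=a_n\alpha_{(n)}\overline{\alpha_{(n)}}$, and using $v_2(a_m(\alpha))\le 0$ (Lemma~\ref{lemord2}), one gets $v_2(a_n)=\tfrac12 v_2(d)-v_2(a_{n-1}(\alpha))\ge 0$ and, similarly, $v_2(b_n)\ge 0$ and $v_2(c_n)\ge 0$. Hence for all large $n$ the numbers $a_n,b_n,c_n$ are ordinary integers, and in particular $|a_n|\ge 1$.

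The archimedean estimates then reduce to a single bound. Because the $a_m(\alpha)$ are real, $\sigma_\infty$ carries $\overline{\alpha_{(n)}}$ to the complex conjugate of $(\alpha_{(n)})_\infty$, so $d=-4a_n^2\,\mathrm{Im}((\alpha_{(n)})_\infty)^2$, i.e.\ $|a_n|=\sqrt{|d|}/\bigl(2\,|\mathrm{Im}((\alpha_{(n)})_\infty)|\bigr)$; with $|a_n|\ge 1$ this already forces $|\mathrm{Im}((\alpha_{(n)})_\infty)|\le\sqrt{|d|}/2$. The complex analogue of Lemma~\ref{alphainK} gives $|\mathrm{Re}((\alpha_{(n)})_\infty)|\le 1$, whence $|b_n|=2|a_n|\,|\mathrm{Re}((\alpha_{(n)})_\infty)|\le 2|a_n|$ and $|c_n|\le|a_nc_n|=\tfrac14|b_n^2-d|$. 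Thus, as soon as one proves $\inf_n|\mathrm{Im}((\alpha_{(n)})_\infty)|>0$, equivalently $\sup_n|a_n|<\infty$, the set $\{(a_n,b_n,c_n)\}$ is a finite subset of $\mathbb{Z}^3$, $\{\alpha_{(n)}\}$ takes finitely many values, and the conjecture follows.

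The remaining step — bounding $\mathrm{Im}((\alpha_{(n)})_\infty)$ away from $0$ — is the main obstacle. One has $\mathrm{Im}((\alpha_{(n+1)})_\infty)=-\mathrm{Im}((\alpha_{(n)})_\infty)/|(\alpha_{(n)})_\infty|^2$, since each step of the algorithm only translates the real part by an element of $\mathbb{Z}[\tfrac12]$ and inverts; the imaginary part is multiplied at each step by the uncontrolled factor $|(\alpha_{(n)})_\infty|^{-2}$, and the crude estimate $|(\alpha_{(n)})_\infty|^2\le 1+|d|/4$ only says it cannot shrink faster than geometrically. The structural fact available is that each map $z\mapsto 1/z-a_n(\alpha)$ is Möbius of determinant $-1$ with entries in $\mathbb{Z}[\tfrac12]$, so all the $(\alpha_{(n)})_\infty$ lie in one $GL_2(\mathbb{Z}[\tfrac12])$-orbit in $\mathbb{C}\setminus\mathbb{R}$ — equivalently the forms $(a_n,b_n,c_n)$ form a single $GL_2(\mathbb{Z}[\tfrac12])$-orbit of integral binary quadratic forms of the fixed discriminant $d$. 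One then wants to invoke $S$-arithmeticity: $SL_2(\mathbb{Z}[\tfrac12])$ is a lattice in $SL_2(\mathbb{R})\times SL_2(\mathbb{Q}_2)$, the real and $2$-adic data being tracked form a single orbit in the associated homogeneous space, and the $2$-adic coordinates are already confined (Lemmas~\ref{lemord2}, \ref{Letalpha2adic}, \ref{Letalpha=} and the domains $D_2^\epsilon$), so the real coordinate should be trapped in a compact region and $|\mathrm{Im}|$ kept away from $0$. Making this quantitative is the hard part: in the real case Lemmas~\ref{Dinfty}, \ref{Letalpha} and \ref{alphainKda} supply an explicit, eventually $\hat T_{2,\epsilon}$-invariant region $D_\infty^1$ bounding $|(\alpha_{(n)}-\overline{\alpha_{(n)}})_\infty|$ from below, whereas here the reduction step controls only $\mathrm{Re}((\alpha_{(n)})_\infty)$, there is no one-dimensional domain argument, and one needs either an explicit eventually-invariant region in $\mathbb{C}\times(\mathbb{C}\cup\{\infty\})$ serving as a natural extension for the complex algorithm, or a reduction-theoretic (Lyapunov-type) argument for $GL_2(\mathbb{Z}[\tfrac12])$ adapted to this particular algorithm that rules out $|a_n|\to\infty$. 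This is the essential difficulty and the reason the statement is offered only as a conjecture.
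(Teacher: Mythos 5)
You have not proved the statement, and you say so yourself: your argument reduces the conjecture to the single unproven claim $\sup_n |a_n|<\infty$ (equivalently $\inf_n|\mathrm{Im}((\alpha_{(n)})_\infty)|>0$) and then stops. That is a genuine gap: in the complex case the algorithm only controls $\mathrm{Re}((\alpha_{(n)})_\infty)$, and the one-dimensional real domain arguments that drive Theorem \ref{Lagrange} (Lemmas \ref{Dinfty}, \ref{Letalpha}, \ref{alphainKda}, which bound $|(\alpha_{(n)}-\overline{\alpha_{(n)}})_\infty|$ from below and hence bound $|a_n|$ from above) have no analogue here, so the finiteness of $\{(a_n,b_n,c_n)\}$ — the engine of the periodicity proof — is not established. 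The $S$-arithmetic/natural-extension remarks at the end are a plausible direction but are not an argument.

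You should be aware, however, that the paper does not prove this statement either: it is explicitly offered as a conjecture, supported only by the numerical experiments of Section \ref{Numericalexperiments} (Table \ref{t2} and the $1665$ cases with $1<n\leq 10000$), and the only result the paper proves in the complex quadratic setting, Theorem \ref{qualityof complex}, takes eventual periodicity as a hypothesis. Within that context your partial analysis looks sound and is a correct diagnosis: the $2$-adic half of the Lagrange argument (Lemmas \ref{lemord2}, \ref{Letalpha=}, \ref{Letalpha2adic}, \ref{2adicalphainK}) does carry over verbatim, and, as you observe, it even yields $v_2(a_n),v_2(b_n),v_2(c_n)\geq 0$ for large $n$ without the archimedean input of Lemma \ref{boundpf}, so the whole difficulty is concentrated in the missing lower bound on $|\mathrm{Im}((\alpha_{(n)})_\infty)|$. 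So treat your write-up as an accurate account of why the statement remains a conjecture, not as a proof of it.
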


In Table \ref{t3} we show 
the total count when a period is detected in the continued fraction expansion of $\sqrt{n}$ with $1<n\leq 10000$ within $1000$ steps for $p<100$. 
Here, we have $n = p^{2m}k$, where $m$ is a non-negative integer ($m\in \mathbb{Z}_{\geq 0}$), and $k$ is a positive non-square integer that is also a quadratic residue below $\frac{p}{2}$ modulo $p$.
As shown in Table \ref{t3}, for all $3 \leq p \leq 23$, the continued fraction expansions become periodic in every case.
Even when $n<20000$, there are a few cases where the period length exceeds 1000. However, the expansions become periodic in all instances.

We give a following conjecture.

\begin{conj}
Let $p$ be a prime with $3 \leq p \leq 23$.
Let $K$ be a quadratic field that has an embedding into $\mathbb{R}$ and $\mathbb{Q}_p$ respectively.
Let $\alpha\in K/\mathbb{Q}$ and  $\{\alpha_{n}\}$ be the sequence obtained by applying Algorithm (Definition \ref{algol1}) to $\alpha$. Then, $\{\alpha_{n}\}$ becomes eventually periodic.
\end{conj}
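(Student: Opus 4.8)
The natural plan to attack this conjecture is to replay, for a general prime $p$ in the stated range, the argument that established the case $p=2$ in Section~3; the conjectural status reflects exactly where that replay becomes delicate. As in Corollary~\ref{Lagrange2}, one first reduces to the case where $\alpha\in K/\mathbb{Q}$ has property $I$ and one runs Algorithm (Definition~\ref{algol2}), tracking the pair $\mathbf{a}=(\alpha,\bar\alpha)$ under the skew product $\hat T_{p,\epsilon}$ of Definition~\ref{product}: eventual periodicity of $\{\alpha_n\}$ follows once we know that two iterates $\alpha_{(n)}$ coincide with indices of the same parity, since the transition rule depends only on that parity.

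First I would set up the archimedean bookkeeping: find constants $c_0,c_1>0$ depending on $p$ and domains $D_\infty^0=\{(x,y)\in\mathbb{R}^2\mid |x|\le\tfrac12,|x-y|>c_0\}$ and $D_\infty^1=\{(x,y)\in\mathbb{R}^2\mid |x|\le\tfrac p2,|x-y|>c_1\}$ for which the analogue of Lemma~\ref{Dinfty} holds, namely $\hat T_{p,0}(D_\infty^0)\subset D_\infty^1$ and $\hat T_{p,1}(D_\infty^1)\subset D_\infty^0$. Since the first coordinate is already forced into $[-\tfrac12,\tfrac12]$ or $[-\tfrac p2,\tfrac p2]$ by the minimality of $m$ in Definition~\ref{transformationT}, and since $|u'-v'|=|u-v|/|uv|$, this is a finite real case analysis; the content is that a consistent pair $(c_0,c_1)$ exists. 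On the $p$-adic side nothing changes: Definition~\ref{domain2} and Lemma~\ref{Letalpha=} are already stated for arbitrary $p$, so $\hat T_{p,0}\colon D_p^0\to D_p^1$ and $\hat T_{p,1}\colon D_p^1\to D_p^0$ hold verbatim, and the telescoping identity $|u_1-v_1|_p=\prod_{k\le n}|u_kv_k|_p\cdot|u_{n+1}-v_{n+1}|_p$ gives the $p$-adic analogue of Lemma~\ref{Letalpha2adic}: for all large odd $n$, $(\mathbf{a}_{(n)})_{\langle p\rangle}\in D_p^1$.

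Next I would prove the two inputs to the finiteness argument. The first is boundedness of $|a_n(\alpha)|_p$ (analogue of Lemma~\ref{boundpf}): from Lemma~\ref{convergentsp2adic} one gets $m_n:=-v_p(q_n)\ge -v_p(p_n)$, so $p^{2m_n}g(p_n,q_n)\in\mathbb{Z}$ for the quadratic form $g$ of $\alpha$; combining the trivial bound $|g(p_n,q_n)|\ll 1$ (from $|p_n/q_n-\alpha_\infty|\ll 1/|q_n|^2$, the general-$p$ version of Lemmas~\ref{boundf} and~\ref{convergentsp2adicda}, itself a consequence of the domain analysis above) with the factorisation of $v_p(g(p_n,q_n))$ through $v_p(a_{n+1}(\alpha))$ (analogue of Lemma~\ref{boundf2}) gives a lower bound on $v_p(a_{n+1}(\alpha))$, hence $|a_n(\alpha)|_p<C$. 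The second is the archimedean analogue of Lemma~\ref{alphainKda}: if $\mathbf{a}_{(2n-1)}$ never lands in $D_\infty^1$, the dichotomy of Lemma~\ref{Letalpha} shows the two-coordinate orbit is trapped near the boundary, the products $\prod|u_kv_k|$ cannot tend to $0$ (else $\alpha=\bar\alpha$), so the iterates converge to an explicit ``corner'' fixed point, the $a_n(\alpha)$ converge to explicit rationals, and being in $\mathbb{Z}[\tfrac1p]$ with bounded $|\cdot|_p$ they are eventually constant; then the resulting periodic matrix product has an eigenvalue of modulus $<1$, and Theorem~\ref{conver} forces $\alpha_{(2N-1)}$ to equal the corresponding eigenvalue ratio, which is rational, contradicting $\alpha\in K/\mathbb{Q}$. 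With both inputs available for all large odd $n$, the conclusion follows as in Theorem~\ref{Lagrange}: the minimal polynomial $p_n(x)=a_nx^2+b_nx+c_n$ of $\alpha_{(n)}$ over $\mathbb{Z}[\tfrac1p]$ has discriminant invariant under $x\mapsto 1/(x+a_{n-1}(\alpha))$, so $d_n=d$ is constant; $\mathbf{a}_{(n)}\in D_\infty^1$ bounds $|a_n|,|b_n|,|c_n|$ from above via $|\alpha_{(n)}-\bar\alpha_{(n)}|>c_1$, while $\mathbf{a}_{(n)}\in D_p^1$, the $p$-adic boundedness of partial quotients, and the analogue of Lemma~\ref{2adicalphainK} bound $v_p(a_n),v_p(b_n),v_p(c_n)$ from below; hence $\{p_n(x)\}$ is finite and two iterates coincide with indices of equal parity.

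The hard part will be the archimedean step. For $p=2$ the two-step contraction factor $|u_{2m}v_{2m}u_{2m+1}v_{2m+1}|<1$ is essentially automatic from the domain bounds, but as $p$ grows the admissible first-coordinate radius $p/2$ in the odd slot grows, the product $(\tfrac12)(\tfrac p2)$ of the naive bounds exceeds $1$, the ``corner'' configurations in the dichotomy lemma proliferate, and producing admissible gap constants $c_0,c_1$ together with a full classification of the boundary dynamics is exactly what one can at present only verify numerically; this is presumably the source of the restriction $3\le p\le 23$. A secondary difficulty is that the ``eventually constant partial quotient'' step requires the eigenvalue ratio of each period matrix to be irrational, which must be checked rather than being automatic.
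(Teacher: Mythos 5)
This statement is not a theorem of the paper but an open conjecture: the author supports it only by the numerical experiments reported in Table \ref{t3} (periodicity detected in all tested cases for $3\le p\le 23$, with failures within $1000$ steps appearing from $p=29$ onward), so there is no proof in the paper to compare your argument against. Your proposal is a reasonable roadmap for how one would try to transplant the $p=2$ argument of Section 3 (Lemmas \ref{Dinfty}, \ref{Letalpha}, \ref{Letalpha2adic}, \ref{boundpf}, \ref{alphainKda}, culminating in Theorem \ref{Lagrange} and Corollary \ref{Lagrange2}), but it is not a proof, and the gap is exactly the one you flag at the end.

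Concretely, the entire chain hinges on the archimedean invariant-domain and contraction analysis, which in the paper is genuinely specific to $p=2$: the definitions of $D_{\infty}^0$, $D_{\infty}^1$ with gap constants $\tfrac12$ and $1$, the verification in Lemma \ref{Dinfty} that $\hat T_{2,0}$ and $\hat T_{2,1}$ exchange them (via the estimate (\ref{uv})), and the boundary-dynamics dichotomy of Lemma \ref{Letalpha} with its explicit limit configurations $\left(\tfrac12,1\right)$, $(-1,-2)$ and eventually constant partial quotients $3$, $-\tfrac32$, all exploit that the odd-index first coordinate is bounded by $1=\tfrac p2$ and that the naive product of bounds stays below $1$. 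For $p>2$ you do not exhibit admissible constants $c_0,c_1$, nor an analogue of the dichotomy, and it is precisely unknown whether such a configuration exists; you concede this step can ``at present only be verified numerically.'' Since Lemma \ref{convergentsp2adicda}, Lemma \ref{boundf}, and hence the $p$-adic boundedness of partial quotients (Lemma \ref{boundpf}) and the finiteness of the set of minimal polynomials in the Lagrange-type argument all sit downstream of that real-side analysis, leaving it unproved leaves the conjecture unproved; the proposal therefore describes the intended strategy (and correctly locates the obstruction and the likely origin of the restriction $3\le p\le 23$) but does not close the statement, and indeed cannot be completed by routine adaptation of the paper's $p=2$ computations.
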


\begin{table}[H]
\caption{Continued fraction expansion of $\sqrt{n}$ with $n>0$}
\label{t1}
\begin{tabular}{l|l|l|l}
\hline
$\sqrt{n}$& continued fraction expansion &$\gamma_r$&$\gamma_{2}$ \\
\hline
$\sqrt{17}$&$[5; -3/4, -3, \overline{5/2, -5}]$&1.54328
&0.45672
\\
$\sqrt{33}$&$[5;7/4,\overline{-3, 9/4}]$&1.04289
&0.957107
\\
$\sqrt{ 41 }$&$[
 7 ;
 -3/2 ,
\overline{ -5 ,
 -5/4}
]$&1.20362
&0.796382
\\
$\sqrt{ 57 }$&$[
 7 ;
 3/2 ,
\overline{ 5/2 ,
 5/4 }
]$&0.867752
&1.13225
\\
$\sqrt{ 65 }$&$[
 9 ;
 -5/8 ,
 -5/2 ,
\overline{ 9/2 ,
 -9/2 }
]$&1.35332
&0.646676
\\
$\sqrt{ 68 }$&$[
 8 ,
 9/2 ,
 -3/2 ,
 -3/2 ,
\overline{ 5 ,
 -5/2 }
]$&1.54328
&0.45672
\\
$\sqrt{ 73 }$&$[
 9 ;
 -7/4 ,
\overline{ -3 ,
 13/8 ,
 -3 ,
 -5/4}
]$&0.913949
&1.08605
\\
$\sqrt{ 89 }$&$[
 9 ;
 11/4 ,
 \overline{ -3 ,
 3/2 ,
 -13/2 ,
 3/2 ,
 -3 ,
 13/4} 
]$&1.17433
&0.825669
\\
$\sqrt{ 97 }$&$[
 9 ;
 13/8 ,
 -3/2 ,
 -5/4 ,
 \overline{-9 ,
 -9/4 }
]$&1.38203
&0.61797
\\
$\sqrt{ 105 }$&$[
 11 ;
 -3/2 ,
 \overline{13/2 ,
 -13/8 }
]$&0.869728
&1.13027
\\
$\sqrt{ 113 }$&$[
 11 ;
 -5/2 ,
 \overline{-9/2 ,
 -9/4}
]$&1.08953
&0.910471
\\
$\sqrt{ 129 }$&$[
 11 ;
 5/2 ,
 \overline{3 ,
 9/4 }
]$&1.21723
&0.78277
\\
$\sqrt{ 132 }$&$[
 12 ;
 -3/2 ,
 -3 ,
 \overline{3/2 ,
 -9/2}
]$&1.04289
&0.957107
\\
$\sqrt{ 137 }$&$[
 11 ;
 3/2 ,
 \overline{-13 ,
 13/8 }
]$&1.1728
&0.827195
\\
$\sqrt{ 145 }$&$[
 13 ;
 -5/4 ,
 5 ,
 -13/2 ,
 5 ,
 \overline{-3/4 ,
 -3 ,
 3/4 ,
 3 }
]$&0.821856
&1.17814
\\
$\sqrt{ 153 }$&$[
 13 ;
 -9/8 ,
 \overline{-5/4 ,
 -5/8 }
]$&0.396649
&1.60335
\\
$\sqrt{ 161 }$&$[
 13 ;
 -11/4 ,
 \overline{-3 ,
 3/2 ,
 -3 ,
 -9/4 }
]$&1.20791
&0.792091
\\
$\sqrt{ 164 }$&$[
 12 ;
 3/2 ,
 -7/2 ,
 \overline{-5/2 ,
 -5/2 }
]$&1.20362
&0.796382
\\
$\sqrt{ 177 }$&$[
 13 ;
 15/4 ,
 \overline{-3 ,
 25/16 ,
 -3 ,
 17/4 }
]$&0.883413
&1.11659
\\
$\sqrt{ 185 }$&$[
 13 ;
 17/8 ,
 \overline{-21/8 ,
 21/8 }
]$&0.541126
&1.45887
\\
$\sqrt{ 193 }$&$[
 13 ;
 5/4 ,
 \overline{-17/2 ,
 7/4 ,
 -3 ,
 3/2 ,
 -7/2 ,
 17/4 ,
 -7/2 ,
 3/2 ,
 -3 ,
 7/4 }
]$&1.03289
&0.967111\\
\hline
\end{tabular}
\end{table}

\newpage

\begin{table}[H]
\caption{Continued fraction expansion of $(\sqrt{n})_{\langle 2 \rangle}$ with $n<0$}
\label{t2}
\begin{tabular}{l|l}
\hline
$(\sqrt{n})_{\langle 2 \rangle}$& continued fraction expansion  \\
\hline
$(\sqrt{ -7 })_{\langle 2 \rangle}$&$[
  -1 ;
 1/2 ,
 -1 ,
 \overline{-1/2 ,
 1 }
]$\\
$(\sqrt{ -15 })_{\langle 2 \rangle}$&$[
 -1 ;
 1/2 ,
 -3/2 ,
 \overline{-1/2 ,
 1/2 }
]$\\
$(\sqrt{ -23 })_{\langle 2 \rangle}$&$[
 -1 ;
 1/2 ,
 -1 ,
 -1/4 ,
\overline{ -3/2 ,
 3/4 }
]$\\
$(\sqrt{ -28 })_{\langle 2 \rangle}$&$[
 0 ;
 -1/2 ,
 1 ,
 1/2 ,
 \overline{1 ,
 -1/2 }
]$\\
$(\sqrt{ -31 })_{\langle 2 \rangle}$&$[
 -1 ;
 1/2 ,
 -7/4 ,
 \overline{-1/2 ,
 1/4 }
]$\\
$(\sqrt{ -39 })_{\langle 2 \rangle}$&$[
  -1 ;
 1/2 ,
 -1 ,
 -1/2 ,
\overline{ -3/2 ,
 1/2 }
]$\\
$(\sqrt{ -47 })_{\langle 2 \rangle}$&$[
 -1 ;
 1/2 ,
 -5/2 ,
 3/4 ,
\overline{ 1 ,
 -1/2 ,
 1/2 ,
 -1/2 ,
 1 ,
 -1/4 }
]$\\
$(\sqrt{ -55 })_{\langle 2 \rangle}$&$[
 -1 ;
 1/2 ,
 -1 ,
 -5/8 ,
\overline{ -3/2 ,
 3/8 }
]$\\
$(\sqrt{ -60 })_{\langle 2 \rangle}$&$[
  0 ;
 -1/2 ,
 1 ,
 3/4 ,
\overline{ 1 ,
 -1/4 }
]$
\\
$(\sqrt{ -63 })_{\langle 2 \rangle}$&$[
 -1 ;
 1/2 ,
 -15/8 ,
 \overline{-1/2 ,
 1/8 }
]$\\
$(\sqrt{ -71 })_{\langle 2 \rangle}$&$[
  -1 ;
 1/2 ,
 -1 ,
 -1/2 ,
 -1 ,
 \overline{-1/2 ,
 5/4 ,
 -1/2 ,
 1 ,
 -5/8 ,
 1 }
]$\\
$(\sqrt{ -79 })_{\langle 2 \rangle}$&$[
  -1 ;
 1/2 ,
 -3/2 ,
 -5/4 ,
 1 ,
 -1/8 ,
\overline{ -7/4 ,
 7/8 }
]$\\
$(\sqrt{ -87 })_{\langle 2 \rangle}$&$[
 -1 ;
 1/2 ,
 -1 ,
 -3/4 ,
\overline{ -3/2 ,
 1/4 }
]$\\
$(\sqrt{ -92 })_{\langle 2 \rangle}$&$[
 0 ;
 -1/2 ,
 1 ,
 1/2 ,
 1/2 ,
\overline{ 3/4 ,
 -3/2 }
]$\\
$(\sqrt{ -95 })_{\langle 2 \rangle}$&$[
 -1 ;
 1/2 ,
 -5/4 ,
 -9/8 ,
 \overline{1 ,
 -1/2 ,
 1 ,
 -1/8 }
]$\\
$(\sqrt{ -103 })_{\langle 2 \rangle}$&$[
 -1 ;
 1/2 ,
 -1 ,
 -1/2 ,
 -3/4 ,
\overline{ -5/8 ,
 5/4 }
]$\\
$(\sqrt{ -111 })_{\langle 2 \rangle}$&$[
 -1 ;
 1/2 ,
 -5/2 ,
 3/2 ,
 \overline{-1 ,
 7/8 ,
 -1 ,
 1/2 }
]$\\
$(\sqrt{ -112 })_{\langle 2 \rangle}$&$[
 0 ;
 1/4 ,
 -3 ,
 \overline{-1/2 ,
 1 }
]$\\
$(\sqrt{ -119 })_{\langle 2 \rangle}$&$[
 -1 ;
 1/2 ,
 -1 ,
 -13/16 ,
 \overline{-3/2 ,
 3/16 }
]$\\
$(\sqrt{ -124 })_{\langle 2 \rangle}$&$[
0 ;
 -1/2 ,
 1 ,
 7/8 ,
 \overline{1 ,
 -1/8 }
]$\\
$(\sqrt{ -127 })_{\langle 2 \rangle}$&$[
 -1 ;
 1/2 ,
 -31/16 ,
 \overline{-1/2 ,
 1/16 }
]$
\\
$(\sqrt{ -135 })_{\langle 2 \rangle}$&$[
 -1 ;
 1/2 ,
 -1 ,
 -1/2 ,
 -1 ,
\overline{ -5/8 ,
 1 }
]$\\
$(\sqrt{ -143 })_{\langle 2 \rangle}$&$[
 -1 ;
 1/2 ,
 -3/2 ,
 -3/2 ,
 -1/2 ,
 7/8 ,
\overline{ -1/2 ,
 -1/2 ,
 1 ,
 -1/8 }
]$
\\
$(\sqrt{ -151 })_{\langle 2 \rangle}$&$[
 -1 ;
 1/2 ,
 -1 ,
 -5/4 ,
 5/4 ,
 \overline{1/2 ,
 -1 ,
 3/8 ,
 -1 ,
 1/2 ,
 -3/4 }
]$\\
$(\sqrt{ -156 })_{\langle 2 \rangle}$&$[
 0 ;
 -1/2 ,
 1 ,
 1/2 ,
 1 ,
\overline{ 3/4 ,
 -1 }
]$\\
$(\sqrt{ -159 })_{\langle 2 \rangle}$&$[
 -1 ;
 1/2 ,
 -11/4 ,
 3/2 ,
 -1 ,
\overline{ -1/8 ,
 1 ,
 -3/4 ,
 1 }
]$\\
$(\sqrt{ -167 })_{\langle 2 \rangle}$&$[
 -1 ;
 1/2 ,
 -1 ,
 -1/2 ,
 -5/2 ,
 \overline{1/4 ,
 5/4 ,
 1/4 ,
 -1/2 ,
 -5/8 ,
 -1/2 }
]$\\
$(\sqrt{ -175 })_{\langle 2 \rangle}$&$[
 -1 ;
 1/2 ,
 -5/2 ,
 15/8 ,
\overline{ -1 ,
 7/8 }
]$\\
$(\sqrt{ -183 })_{\langle 2 \rangle}$&$[
 -1 ;
 1/2 ,
 -1 ,
 -7/8 ,
\overline{ -3/2 ,
 1/8 }
]$\\
$(\sqrt{ -188 })_{\langle 2 \rangle}$&$[
  0 ;
 -1/2 ,
 1 ,
 5/4 ,
 -3/2 ,
 \overline{-1/2 ,
 1 ,
 -1/4 ,
 1 ,
 -1/2 ,
 1/2 }
]$\\
$(\sqrt{ -191 })_{\langle 2 \rangle}$&$[
 -1 ;
 1/2 ,
 -21/8 ,
 5/4 ,
\overline{ 1/4 ,
 1/4 ,
 -1 ,
 1/2 ,
 -1/2 ,
 -1/8 ,
 -1/2 ,
 1/2 ,
 -1 ,
 1/4 }
]$\\
$(\sqrt{ -199 })_{\langle 2 \rangle}$&$[
 -1 ;
 1/2 ,
 -1 ,
 -1/2 ,
 -1 ,
 \overline{-1/2 ,
 1 ,
 -1/2 ,
 5/4 ,
 -1/2 ,
 1 ,
 -1/2 ,
 1 ,
 -5/8 ,
 1 }
]$\\
\hline
\end{tabular}
\end{table}

\begin{table}[H]
    \centering
    \caption{Periodic Counts in Continued Fractions: $\sqrt{n}$ for $1<n<10000$ within $1000$ steps}
    \label{t3}
    \begin{tabular}{|c|c|c|c|}
        \hline
        $p$ & Total Count with Detected Period & Total Count with Undetected Period & Total Count \\
        \hline
        3 & 3652 & 0 & 3652 \\
        5 & 4067 & 0 & 4067 \\
        7 & 4278 & 0 & 4278 \\
        11 &4485&0&4485 \\
        13 &4543&0&4543 \\
        17 &4623&0&4623 \\
        19 &4652&0&4652 \\
        23 &4696&0&4696 \\
        29 &4443&291&4734 \\
        31 &4090&659&4749 \\
        37  &2964&1806&4770 \\
        41  &2676&2106&4782 \\
        43  &2455&2333&4788 \\
        47  &2219&2582&4801 \\
        53  &1837&2968&4805 \\
        59  &1807&3014&4821 \\
        61  &1753&3066&4819 \\
        67  &1669&3158&4827 \\
        71  &1612&3223&4835 \\
        73  &1562&3271&4833 \\
        79  &1399&3443&4842 \\
        83  &1340&3505&4845\\
        89  &1288&3557&4845 \\
        97  &1119&3733&4852 \\
        \hline
    \end{tabular}
\end{table}

\section*{Acknowledgements}
This research was supported by JSPS KAKENHI Grant Number JP22K12197.

\newpage

\vspace{2cm}

\noindent
Shin-ichi Yasutomi: Faculty of Science, Toho University, 2-1 Miyama, Funabashi Chiba, 274-8510, JAPAN\\
{\it E-mail address: shinichi.yasutomi@sci.toho-u.ac.jp}
\end{document}